\DeclareMathOperator{\Res}{Res}
\theoremstyle{plain}
\newtheorem{theorem}{Theorem}[section]
\newtheorem*{theorem*}{Theorem}
\newtheorem{proposition}[theorem]{Proposition}
\newtheorem{lemma}[theorem]{Lemma}
\theoremstyle{definition}
\newtheorem{definition}[theorem]{Definition}
\newtheorem{notation}[theorem]{Notation}
\newtheorem{remark}[theorem]{Remark}
\newtheorem{example}[theorem]{Example}
\newcommand{\enm}[1]{\ensuremath{#1}}          %
\newcommand{\op}[1]{\operatorname{#1}}
\newcommand{\cal}[1]{\mathcal{#1}}
\newcommand{\EE}{\enm{\mathbb{E}}}
\newcommand{\NN}{\enm{\mathbb{N}}}
\newcommand{\PP}{\enm{\mathbb{P}}}
\newcommand{\TT}{\enm{\mathbb{T}}}
\newcommand{\KK}{\enm{\mathbb{K}}}
\newcommand{\Ee}{\enm{\cal{E}}}
\newcommand{\Ii}{\enm{\cal{I}}}
\newcommand{\Ll}{\enm{\cal{L}}}
\newcommand{\Oo}{\enm{\cal{O}}}
\newcommand{\Ss}{\enm{\cal{S}}}
\newcommand{\Uu}{\enm{\cal{U}}}
\renewcommand{\phi}{\varphi}
\renewcommand{\theta}{\vartheta}
\renewcommand{\epsilon}{\varepsilon}
\newcommand{\Aut}{\op{Aut}}
\newcommand{\codim}{\op{codim}}
\newcommand{\Union}{\bigcup}
\renewcommand{\to}[1][]{\xrightarrow{\ #1\ }}
\newcommand{\old}[1]{}
\newtheorem{claim}{Claim}[theorem]
\title{Terracini Locus for three points on a Segre variety}
\date{}
\author{Edoardo Ballico} \address{Dipartimento di Matematica, Univ. Trento, Italy, edoardo.ballico@unitn.it}
\author{Alessandra Bernardi} \address{Dipartimento di Matematica, Univ. Trento, Italy, alessandra.bernardi@unitn.it}
\author{Pierpaola Santarsiero} \address{Dipartimento di Matematica, Univ. Trento, Italy, p.santarsiero-1@unitn.it}
\begin{document}
\subjclass[2020]{14N07, 15A69}

\thanks{The authors were partially supported by GNSAGA of INDAM}

\maketitle

\begin{abstract}
We introduce the notion of \emph{$r$-th Terracini Locus} of a variety and we compute it for at most three points on a Segre variety.
\end{abstract}

\section*{Introduction}

The celebrated Terracini Lemma \cite{ter, adl} is a well known and extremely powerful result in Algebraic Geometry that allows to compute the dimensions of $r$-th secant varieties of a given variety $X$ in terms of the dimensions of the sum of tangent spaces at $r$ generic points of $X$. If $X$ is the embedding of a variety $Y$  into a projective space via a complete linear system $\mathcal{L}$, then the  co-dimension of the $r$-th secant variety of $X$ is equivalent to the $h^0(Y,I_Z\otimes \mathcal{L})$ where $Z$ is a 0-dimensional scheme of $r$ double generic fat points (cf. e.g.
 \cite{gim, CGG2, devar}).
The classical apolarity theory \cite{ger,IK} is the very well known example in which the variety $X$ is  $Y=\mathbb{P}^n$ embedded via $\mathcal{O}(d)$ for which Alexander-Hirschowitz completely classified dimensions of all secant varieties to any Veronese variety (cf. \cite{ah2}). The only other complete classification is for secant varieties of Segre-Veronese embedding of products of $Y=(\mathbb{P}^1)$'s via $\mathcal{O}(d_1, \ldots, d_k)$ due to Laface-Postinghel (cf. \cite{LP}).  There is a vast literature in this field
(see e.g. \cite{CGG, gal, BBG, grv, Abo, AboB, ballico, BBC, Abrescia, AboB2, AMR} and references therein)  but almost anything has been said for the case in which the $0$-dimensional scheme of double fat points is not necessarily supported on generic points. Clearly if the points are not generic, the equivalence between  $h^0(Y,I_Z\otimes \mathcal{L})$  and the co-dimension of the secant variety of $X$ is not valid anymore. Indeed Terracini lemma states that the tangent space of a $r$-th secant variety of a variety $X$ at a generic point $Q\in \langle P_1, \ldots , P_r \rangle$, with $P_i\in X$ generic, is equal to the span of the tangent spaces of $X$ at $P_i$'s, but if the $P_i$'s are not generic one can only say that $\langle T_{P_1}X , \ldots , T_{P_r}X\rangle \subseteq T_Q(\sigma_r(X))$ where $T_Q(\sigma_r(X))$ is the tangent space at $Q\in\langle P_1, \ldots , P_r \rangle$ of the $r$-th secant variety of X. This phenomenon is related to the fact that $\codim\langle T_{P_1}X , \ldots , T_{P_r}X\rangle= h^0(Y,I_Z\otimes \mathcal{L})$ which may be higher than the one for  generic points.
If one considers the exact sequence
$$     0\to \Ii _Z \to \Oo_Y\to
\Oo_Z\to 0
$$
and then one tensorizes it by $\mathcal{L}$:
$$0\to \Ii _Z \otimes \mathcal{L} \to \mathcal{L}\to
\mathcal{L}_Z\to 0$$
the corresponding cohomology exact sequence 
$$0 \to 
H^0(Y, \mathcal{I}_Z\otimes \mathcal{L}) \to
H^0(Y,\mathcal{L})\to
H^0(Z, \mathcal{L}|_Z)\to H^1(Y, \mathcal{I}_Z\otimes \mathcal{L})\to 0$$
shows that 
$$
h^0(Y,\mathcal{L})-h^0(Y, \mathcal{I}_Z\otimes \mathcal{L}) = 
h^0(Z, \mathcal{L}|_Z)- h^1(Y, \mathcal{I}_Z\otimes \mathcal{L})$$ 
that is to say that the dimension of the span  of embedding of $Z$ via $\mathcal{L}$ can be computed as $$h^0(\mathcal{O}_Z)-h^1(\mathcal{I}_Z\otimes \mathcal{L})-1.$$

From this one easily see the role played by the $h^1(\Ii _Z\otimes \mathcal{L})$ in controlling the dependence of $ h^0(Y, Z \otimes \mathcal{L}) $.

\medskip

In this paper we fix our attention on the case of Segre varieties, i.e. the embedding of $Y=\mathbb{P}^{n_1}\times    \cdots   \times \mathbb{P}^{n_k}$ via $\mathcal{O}(1,\ldots ,1)$ and $Z$ a scheme of either 2 or 3 double fat points. We will define the key object that we will call the $r$-th Terracini Locus  that will essentially  contain all the subsets of $r$ points for which both $h^0(\Ii_Z(1, \ldots , 1))>0$ and $h^1(\Ii_Z(1, \ldots , 1))>0$.

We like to point out the geometric importance of the  $r$-th Terracini Locus.
Consider the so called $r$-th Abstract Secant variety of a Segre Variety $X\subset \mathbb{P}^N$:
$$Abs_r(X):=\{(Q; P_1, \ldots , P_r)\in \mathbb{P}^N \times X^r \, | \, Q\in \langle P_1, \ldots, P_r \rangle\}.$$
If one considers the first projection on $\mathbb{P}^N$ one gets that $Abs_r(X)$ projects onto what we can call an ``\,open part\,'' of the $r$-th secant variety of $X$, namely $\sigma_r^0(X):= \{Q \in \mathbb{P}^N\, | \, Q=\sum_{i=1}^r P_i, \, P_i \in X\}$:
$$T_r:Abs_r(X) \to \sigma_r^0(X).$$
We call such a projection $T_r$ the $r$-th \emph{Terracini map}. The differential of the $r$-th Terracini map is defined on each  point of $Abs_r(X)$ and the $r$-th Terracini locus is nothing else than a measure of the degeneracy of such a linear map.
Remark that any point of $Abs_r(X)$ is smooth since $X$ is smooth.

\medskip

In the first section of this paper we introduce the notation and we show that the second Terracini locus is empty, meaning that the second Terracini map is never degenerate.
The second section is a technical one where we concentrate all the core lemmas that will be needed in the sequel in order to prove our main theorem (Theorem \ref{i33}) that is a complete description of the $3$-rd Terracini locus. Section \ref{section:examples} is a crucial section where we show all the examples that will turn out the only cases in which the $3$-rd Terracini locus will be not empty. Section \ref{section:main} is devoted to the proof of the main theorem that essentially will be a discussion on why the already higlighted examples in Sections \ref{ex1} and \ref{a4.0} are the only non empty $3$-rd Terracini locus.

Let $Z\subset Y $ be a scheme of $ r\geq 2$ double points embedded via Segre in a $n $-dimensional multiprojective space $Y $.
In the last section we compute the maximal value $\max_{n>0,r\geq 2}\{ h^1\left(\Ii_{Z}(1,\dots,1)\right)>0\}.$
We will show that 
$$h^1\left(\Ii_Z(1,\dots,1)\right) \leq (r-1)(n+1)$$
and that equality holds if and only if $ Y=\PP^n$. Moreover, since $h^0\left(\Ii_{\PP^n}(1)\right)=0 $, we compute the maximal value of such dimension providing that also $h^0\left(\Ii_Z(1,\dots,1)\right)>0 $. 
Finally we prove that for any multiprojective space $Y $ of dimension  $n\geq 3 $, one can always find $r\geq 3 $ points $S\subset Y$ belonging to the corresponding $ r$-th Terracini locus. This conclusion might open to further investigation of the introduced locus.

\section{Notation}
We work over an algebraically closed field $\mathbb{K}$ of characteristic $ 0$.
In the following we will always deal with a multiprojective space of $ k>0$ factors  $Y$ of the form $$
Y:=\mathbb{P}^{n_1}\times \cdots \times \PP^{n_k}.$$
\begin{notation} 
Let $V_1,\dots,V_k $ be $\KK$-vector spaces of dimensions $n_1+1,\dots,n_k+1 $ respectively. Denote by $\nu $ the Segre embedding of $ Y$, which is defined as
\begin{align*}
\nu \colon &\PP(V_1)\times \cdots \times \PP(V_k)\rightarrow \PP(V_1\otimes \cdots \otimes V_k)\\
& ([v_1],\dots,[v_k])\mapsto [v_1\otimes \cdots \otimes v_k].
\end{align*}

\end{notation}
We will denote the Segre variety of $Y$ by $$X:=\nu(Y).$$
\begin{notation}\label{Notation:Yi}
We denote the projection of $Y $ onto the $i $-th factor by
$$\pi_i\colon Y\to \PP^{n_i}.$$
Fix $Y_i:=\mathbb{P}^{n_1}\times \cdots \widehat{\PP^{n_i}}\times \cdots \times \PP^{n_k} $, for some $1\leq i\leq k $. By $\eta_i $ we denote the map that projects $ Y$ onto $ Y_i$ forgetting the $ i$-th factor, i.e.
$$\eta_i\colon Y\to Y_i.$$  

The Segre embedding of $Y_i $ is denoted via $$\nu_i\colon Y_i\to \mathbb{P}(V_1 \otimes  \cdots \otimes  \hat V_i \otimes \cdots \otimes V_k).$$
\end{notation}

Let $ X\subset \mathbb{P}^N$ be an irreducible non-degenerate projective variety of dimension $ n$. Fix $r\geq 2 $. The $r $-th \emph{secant variety} $\sigma_r(X) $ of $ X$ is the Zariski closure of all $(r-1) $-planes spanned by $r $ points of $ X$. Namely 
$$ \sigma_r(X):=\overline{\Union_{p_1,\dots,p_r \in X}\langle p_1,\dots,p_r \rangle}.$$
Remark that $\dim\sigma_r(X)\leq \min\{r(n+1)-1,N \} $. If the previous inequality is strict the $ r$-th secant variety of $ X$ is said to be \emph{defective} with \emph{defect} $\min\{r(n+1)-1,N \} -\dim\sigma_r(X) $.

\begin{notation}\label{doble:points}
For any $p\in Y$,  denote by $(2p,Y)$ the first infinitesimal
neighborhood of $p$ in $Y$, i.e. the closed subscheme of $Y$ with $(\Ii _{p,Y})^2$ as its ideal sheaf. For any finite set
$S\subset Y$ let $(2S,Y):= \cup _{p,S} (2p,Y)$. We often write $2p$ and $2S$ instead of $(2p,Y)$ and $(2S,Y)$ if the dependence from $Y$ is clear.
\end{notation}

Remark that if $W\subseteq Y$ is a multiprojective subspace and $p\in W$, then 
$(2p,W) \neq (2p,Y)$ as schemes. In fact $\deg (2p,W) =\dim W+1$ and $\deg (2p,Y)
=\dim Y+1$. However $(2p,W) = (2p,Y)\cap W$ (scheme-theoretic intersection) and hence $(2p,W)\subseteq (2p,Y)$.
Thus for any finite set $S\subset W$ one has $(2S,W) = (2S,Y)\cap W$.

\begin{notation} For $k>0 $ fix the following notation:
\begin{itemize}
    \item $\epsilon_i:=(0,\dots,0,1,0,\dots,0)\in \NN^k $ is the $k$-uple given by all $ 0$'s but $ 1$ in the $ i$-th position;
    
    \item $ \hat{\epsilon}_i:=(1,\dots,1,0,1,\dots,1)\in \NN^k$ is the $k$-uple given by all $ 1$'s but $ 0$ in the $ i$-th position;
    
    \item $\epsilon_I$ is the $k$-uple having $1$'s in the places indexed by the finite set $I \subset \{1,\dots,k \} $ and $0$'s everywhere else;
    
    \item $\hat \epsilon_I$  is the $k$-uple having $0$'s in the places indexed by  $I$ and $1$'s everywhere else.
\end{itemize}
\end{notation}

Let $ Z\subset Y$ be a $0$-dimensional scheme. Fix a finite set $ I\subset\{1,\dots,k \}$ and $ H\in \vert\Oo(
\epsilon_I) \vert$. 
Denote by $ \Res_{H}(Z)$ the residue of $ Z$ with respect to $ H$, i.e. the $ 0$-dimensional scheme defined by the  ideal sheaf $ \Ii_Z \colon \Ii_H$. By $ H\cap Z$ denote the scheme-theoretic intersection of $ Z$ and $ H$.
The residual exact sequence of $ Z$ with respect to $ H$ is   
\begin{equation*}
    0\to \mathcal{I}_{\Res_{H}(Z)}\big(
    \hat \epsilon_I
    \big)\to \mathcal{I}_{Z}(1,\dots,1) \to \mathcal{I}_{H\cap Z,H}(1,\dots,1)\to 0.
\end{equation*}

Since throughout the paper we will deal with double points, we recall how to adapt the residual exact sequence in this case.

Take $Z:=(2S,Y) $, where $S\subset Y $ is a finite set, consider a proper subset $S'\subset S $ such that $\# (S'\cap H)>0 $ and denote by $S'':=S\setminus S' $. In this case, the scheme-theoretic intersection of $Z $ and $H $ is  $(2S',H) $, while $\Res_H(Z) $ is the zero-dimensional scheme of what is left once we specialized $(2S',Y) $ into $H $, namely $$\Res_H(Z)=(S',Y)\cup (2S'',Y).$$

Since we will use the restricted exact sequence only with $ 0$-dimensional schemes, we recall the restriction sequence of $ Z$ with respect to $ Y$, namely
 \begin{equation*}
     0\to \Ii _Z(1,\dots,1) \to \Oo_Y(1,\dots,1)\to
\Oo_Z(1,\dots,1)\to 0.
 \end{equation*}
 Since this exact sequence is defined for any embedding of $ Y$, one can also use it for any line bundle given by $ \Oo(
 \epsilon_I
 )$ instead of the one given by $ \Oo(1,\dots,1)$, i.e.
  \begin{equation*}
     0\to \Ii _Z\big(
 \epsilon_I
 \big) \to \Oo_Y\big(
  \epsilon_I
 \big)\to
\Oo_Z\big(  
 \epsilon_I
\big)\to 0.
 \end{equation*}

The corresponding cohomology exact sequence 
$$0 \to H^0(Y, \mathcal{I}_Z(\epsilon_I)) \to H^0(Y,\mathcal{O}_Y(\epsilon_I))\to H^0(Y, \mathcal{O}_Z)\to H^1(Y, \mathcal{I}_Z(\epsilon_I))\to 0$$
shows that the dimension of the subspace
$$\langle \nu(Z)\rangle= \mathbb{P}(H^0(Y, \mathcal{I}_Z(\epsilon_I)^\perp))$$ 
is equal to $h^0(\mathcal{O}_Z(\epsilon_I))-h^1(\mathcal{I}_Z(\epsilon_I))-1$.

From this one easily see the role played by the $h^1(\Ii _Z(\epsilon_I))$ in controlling the dependence of the multilinear forms passing through $Z$ and therefore the speciality of $Z$.

\begin{notation}\label{virtual:defect}
For any zero-dimensional scheme $Z\subset Y$ set $$\delta (Z,Y):= h^1(\Ii _Z(1,\dots ,1)).$$ 
If $W\subseteq Y$ is a multiprojective subspace 
set $\delta (Z,W):= h^1(W,\Ii _{Z,W}(1,\dots ,1)).$ We remark that $\delta (Z,W)=\delta (Z,Y)$.
Sometimes we will write $\delta (Z)$ instead of $\delta(Z,Y)$, when the dependence from $ Y$ is clear. 

In particular for any finite set $S\subset W$ there are defined the integers $\delta ((2S,Y),Y)$, $\delta ((2S,W),W)$ and $\delta ((2S,W),Y)$.
Set $$\delta (2S,Y):= \delta ((2S,Y),Y) \hbox{ and }\delta (2S,W):= \delta ((2S,W),W).$$ Clearly $\delta (2S,W) = \delta ((2S,W),Y)$. For the specific case of double fat points, $\delta(2S,Y)$ will be called the Terracini defect of $S$ in $Y$ (see Definition \ref{terraccini:loci}).
\end{notation}

Remark that by using the above exact sequence with $H\in |\Oo_Y(\epsilon _i)|$, $i\in \{1,\dots ,k\}$, for a finite set $S\subset H$ such that $h^0\left(H,\Ii_{S\cap H,H}(1,\dots
,1)\right)=0$
one has $\delta (2S,Y) = \delta (2S,H) +h^0(\Ii_S(\hat{\epsilon}_i))$.

\begin{notation} Let $ Y$ be any multiprojective space. For all positive integers $r$, denote by $S(Y,r)$ the set of all subsets of $Y$ with
cardinality $r$.
\end{notation}
Let 
$ S\in S(Y,r)$ 
be a set of $r>0$ distinct points.
The \emph{minimal multiprojective space containing $ S$} is $ Y':=\PP^{n'_1}\times \cdots \times \PP^{n'_{k'}}\subseteq Y$ 
where $\PP^{n'_i}:=\langle \pi_i(S) \rangle $, $ i=1,\dots,k'$ and  $ k'\leq k$.
The integer $ k'\leq k$ is the maximum integer such that $ \#\pi_i(S)>1$ for all $ i$'s.
Clearly $ \PP^{n'_1}\times \cdots \times \PP^{n'_{k'}}\times \{ o_{k'+1}\}\times \cdots \times \{o_{k} \}\cong \PP^{n'_1}\times \cdots \times \PP^{n'_{k'}}$ and in general $ Y'$ is such that $k'\leq k  $ and $n'_i>0$ for all $i$'s.

\smallskip

We have now introduced all the necessary tools to define the \textbf{$r$-th Terracini locus} that will be the main actor of the present paper.

\begin{definition}\label{terraccini:loci}  For all positive integers $r$ 
and for any multiprojective space $Y$, define
$$\TT _1(Y,r):= \left\{S\in S(Y,r)\mid h^0\left(\Ii _{(2S,Y)}(1,\dots ,1)\right) >0\ \mathrm{and} \ \delta(2S,Y)>0\right\}.$$
We will call the \emph{$r$-th Terracini locus} $\TT(Y,r) $ of all $r$-uple of points of $Y$ the set

\begin{table}[H]
    \centering
    \begin{tabular}{|c|}
    \hline \\
        $\TT(Y,r):=\left\{S\in \TT_1(Y,r) \, | \, Y \hbox{ is the minimal multiprojective space containing } S\right\}.$  \\
        \\
        \hline
    \end{tabular}
\end{table}
 
For any $S\in \TT_1(Y,r)$ we call the integer $\delta (2S,Y)$  introduced in Notation \ref{virtual:defect} the \emph{$r$-th Terracini defect of $S$ in $Y$} or the \emph{defect
of $2S$ in $Y$}. 

If $Y'\subseteq Y$ is the minimal multiprojective space containing a set $S$, the integer $\delta
(2S,Y'):=h^1(Y',\Ii_{(2S,Y')}(1,\dots ,1))$ is called the \emph{absolute $r$-th Terracini defect of $S$}.
\end{definition}

\subsection{The $2$-nd Terracini locus is empty}
\phantom{a}

In this subsection we  prove that no sets of two distinct points $ S\subset Y$ such that $ Y$ is the minimal multiprojective space containing $ S$, is contained in the $2$-nd Terracini locus $ \TT(Y,2)$.

\begin{proposition}\label{d1} The $2$-nd Terracini locus $\TT(Y,2)$ 
is empty for any multiprojective space $Y$.
\end{proposition}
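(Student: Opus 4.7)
The plan is as follows. First, since $Y$ is the minimal multiprojective space containing $S=\{p_1,p_2\}$, each projection $\pi_i(S)$ must span $\PP^{n_i}$; as $\#\pi_i(S)\le 2$, this forces $n_i=1$ and $\pi_i(p_1)\neq \pi_i(p_2)$ for every $i$, so $Y=(\PP^1)^k$ for some $k\ge 1$. After a change of coordinates on each factor I may assume $p_1=([1:0],\dots,[1:0])$ and $p_2=([0:1],\dots,[0:1])$.

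Next I would describe $H^0(\Ii_{2S}(1,\dots,1))$ by a direct coordinate computation. Writing an arbitrary section of $\Oo_Y(1,\dots,1)$ as the multilinear form $f=\sum_{I\subseteq\{1,\dots,k\}}a_I\prod_{i\in I}x_1^{(i)}\prod_{j\notin I}x_0^{(j)}$, the local expansion in the affine chart $x_0^{(i)}=1$ for all $i$ shows that $f$ vanishes to order $2$ at $p_1$ if and only if $a_I=0$ whenever $|I|\le 1$. Symmetrically, using the chart $x_1^{(i)}=1$ for all $i$, vanishing to order $2$ at $p_2$ amounts to $a_I=0$ for $|I|\ge k-1$. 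These $2(k+1)$ linear conditions involve disjoint coefficients when $k\ge 3$ (and collapse to a subset of the $2^k$ coefficients when $k\le 2$), so they are independent and yield
\[
h^0\bigl(\Ii_{2S}(1,\dots,1)\bigr)=\max\bigl(0,\,2^k-2(k+1)\bigr).
\]

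Finally I would close using the Euler characteristic. Since $\deg(2S)=2(k+1)$, $h^0(\Oo_Y(1,\dots,1))=2^k$, and $h^i(\Oo_Y(1,\dots,1))=0$ for $i\ge 1$ by K\"unneth on $(\PP^1)^k$, the cohomology sequence attached to $0\to \Ii_{2S}(1,\dots,1)\to \Oo_Y(1,\dots,1)\to \Oo_{2S}(1,\dots,1)\to 0$ forces
\[
h^1\bigl(\Ii_{2S}(1,\dots,1)\bigr)=h^0\bigl(\Ii_{2S}(1,\dots,1)\bigr)-2^k+2(k+1)=\max\bigl(0,\,2(k+1)-2^k\bigr).
\]
Thus at most one of $h^0$ and $h^1$ is strictly positive for every $k$, so $S\notin \TT_1(Y,2)$ and hence $\TT(Y,2)=\emptyset$. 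The only step where one needs to be a bit careful is the linear independence of the $2(k+1)$ vanishing conditions, but this is immediate because they cut out disjoint collections of coefficients (at worst exhausting all $2^k$ of them in the small cases $k\le 3$).
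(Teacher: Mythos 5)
Your argument is correct, and it takes a genuinely different route from the paper's. Both proofs begin with the same reduction: minimality of $Y$ forces $Y=(\PP^1)^k$ with $\pi_i(p_1)\neq\pi_i(p_2)$ for every $i$. From there the paper does not compute cohomology by hand: for $k\le 2$ it notes $h^0(\Ii_{(2S,Y)}(1,\dots,1))=0$ by elementary rank considerations, and for $k\ge 3$ it uses the transitivity of $(\Aut(\PP^1))^k$ to treat $S$ as general and then quotes the known non-defectivity $\dim\sigma_2(\nu((\PP^1)^k))=2k+1$ together with Terracini's lemma to conclude $h^1=0$. You instead normalize $p_1,p_2$ to the two coordinate points and read off everything from the explicit multilinear form: the two double-point conditions kill exactly the coefficients $a_I$ with $|I|\le 1$ or $|I|\ge k-1$, and since each condition annihilates a single coefficient their independence is automatic; for $k\ge 3$ the two families are disjoint (and for $k\le 3$ they exhaust all $2^k$ coefficients), giving $h^0=\max(0,2^k-2(k+1))$ and, via the Euler characteristic, $h^1=\max(0,2(k+1)-2^k)$, so the two are never simultaneously positive. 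Your version is elementary and self-contained --- in effect it reproves the non-defectivity of $\sigma_2$ for these Segre varieties rather than citing it --- while the paper's version is shorter given the external references. The only point deserving a word of care, the independence of the vanishing conditions, is handled correctly since distinct conditions involve distinct coefficients.
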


\begin{proof}
Let $ S\in S(Y,2)$ be such that $ Y$ is the minimal multiprojective space containing $ S$. So $\#\pi_i(S)=2 $ for all $i $'s and $Y\cong (\PP^1)^k$ for some $k\geq 1 $. By definition of $\TT(Y,2)$, we need to prove that either $h^0\left(\Ii _{(2S,Y)}(1,\dots ,1)\right) =0$ or $h^1\left(\Ii _{(2S,Y)}(1,\dots
,1)\right) =0$.
Clearly if $k=1 $ then $h^0\left(\Ii _{(2S,\mathbb{P}^1)}(1)\right) =0$. If $k=2 $, then $h^0(\Ii_{(2S,Y)}(1,1))=0 $ since $ S$ can be seen as a general subset of $ 2$ distinct points by the action of $(\mathrm{Aut}(\PP^1))^2 $ and a general $ 2\times 2$ matrix has rank $ 2$.

Let $k\ge 3$. Let $E$ be the set of all $A\subset Y$ such that $\#A=\#\pi _i(A)=2$ for all $i$'s.
The group $(\mathrm{Aut}(\PP^1))^k$ acts transitively on $E$. Thus $S$ may be considered as a general subset of $Y$ with
cardinality $2$. Since $\dim \sigma _2(X) =2k+1$ for all $k\ge 3$ (cf. \cite{laf,v}), Terracini's lemma gives $h^1\left(\Ii
_{(2S,Y)}(1,\dots ,1)\right)=0$.
\end{proof}

Let's point out some consequence of Proposition \ref{d1} in terms of identifiability of rank $ 2$ tensors. Since we are dealing with finite subsets $ S$ of two distinct points, the minimal multiprojective space containing $ S$ is $Y=(\PP^1)^k $ for some $ k\geq1$, which is equivalent to say that $\#\pi_i(S)=2 $ for all $i $'s. Thus we may look at $ S:=\{p_1,p_2\}$ as a general set of two distinct points thanks to the action of $ (\mathrm{Aut}(\PP^1))^k$.\\
The emptiness of the $2 $-nd Terracini locus $ \TT(Y,2)$ corresponds to a non degeneracy of the second Terracini map $ T_2:Abs_r(X) \to \sigma_r^0(X)$ for any $X=\nu((\PP^1)^k) $, with arbitrary $k\geq 2$.
Since we are working with general points, the condition $$h^0(\Ii_{(2S,Y)}(1,\dots,1)) >0$$ corresponds to prescribe that the $ 2$-nd secant variety $ \sigma_2(X)$ does not fill the ambient space. This condition together with $$ h^1(\Ii_{(2S,Y)}(1,\dots,1))>0$$ are equivalent to ask that the dimension of the tangent space $T_q\sigma_2(X) $ at a general $ q\in \PP^{2^k-1}$ such that $q\in \langle \nu(p_1),\nu(p_2) \rangle $, is strictly less than $2(k+1)-1$. 

We are therefore looking for general rank $ 2$ tensors such that the corresponding secant variety does not fill the ambient space and such that the generic fiber of $T_2 $ is positive dimensional. 

The emptiness of $ \TT(Y,2)$ establishes that
given $\sigma_2(X)$ that does not fill the ambient space, all points of $ \sigma^0_2(X)\setminus X$ are identifiable.

\section{Main lemmas}
\begin{remark}\label{m1} 

If $A\subset B\subset Y$ are zero-dimensional schemes, then
\begin{equation}\label{eqm1}
\delta (A,Y)\le \delta (B,Y)\le \delta(A,Y)+\deg(B) -\deg(A).
\end{equation}
Indeed the first inequality is clear since $ A\subset B$. Moreover we remark that if $ A\subset B$ then $ h^0(\Ii_B(1,\dots,1))\leq h^0(\Ii_A(1,\dots,1))$. So by the restriction exact sequences of both $ A$ and $ B$ with respect to $ Y$, we get the second inequality. In particular for all $S'\subset S\subset Y$ we have
\begin{equation}\label{eqm2}
\delta (2S',Y)\le \delta (2S,Y)\le \delta(2S',Y)+(\#S-\#S')(\dim Y+1).
\end{equation}
\end{remark}

\begin{definition}
We say that a finite set $S\subset Y$ is 
\emph{minimally Terracini} if $\delta (2S,Y)>0$ and $\delta (2S',Y) =0$ for all $S'\subsetneq S$.
\end{definition}

The following key lemma will be used to prove a sort of concision for the Terracini locus of a finite set $S\subset Y$.
By Remark \ref{m1}, if $S'\subset S$ is a scheme of $ r$ double points, the $ r$-th Terracini defect $ \delta(2S',Y)$ is smaller than $\delta(2S,Y) $.\\ In the following lemma we fix the finite set $ S\in S(W,r)$ and we compare the behaviour of the two $ r$-th Terracini defects $\delta(2S,W) $ and $ \delta(2S,Y)$, where $ W\subsetneq Y$ is a smaller multiprojective space.
Since $Y $ is no longer the minimal multiprojective space containing $ S\subset W\subsetneq Y$, the $ r$-th Terracini defect $\delta(2S,Y) $ may be bigger than $\delta(2S,W) $. In case \ref{a1disuguaglianze} of Lemma \ref{a1} we give an upper bound for $\delta(2S,Y) $ via $\delta(2S,W) $.  Case \ref{a1unfattore} can be considered as a strong version of concision because the achievement of equality $\delta(2S,W)=\delta(2S,Y) $ is telling that the defect of $ 2S$ is independent from the number of factors of the multiprojective space where $S$ is embedded.

\begin{lemma}\label{a1}
Let $W\subsetneq Y$ be multiprojective spaces. Let $S\subset W$ be a finite set. Then:
\begin{enumerate}[label=(\alph*)]
\item \label{a1disuguaglianze}
$ \delta(2S,W)\leq \delta(2S,Y)\leq \delta(2S,W)+(\#S-1)(\dim Y
-\dim W).$

\item\label{a1unfattore} If $W$ is isomorphic to a factor of $Y$ and $\nu (S)$ is linearly independent, then 
$ \delta(2S,W)=\delta(2S,Y)$.
\end{enumerate}
\end{lemma}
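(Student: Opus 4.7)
For part \ref{a1disuguaglianze}, the lower bound $\delta(2S,W)\le\delta(2S,Y)$ is immediate: the scheme-theoretic intersection $(2S,W)=(2S,Y)\cap W$ is a subscheme of $(2S,Y)$, so the first inequality of Remark \ref{m1} applied with $A=(2S,W)\subset B=(2S,Y)$, combined with the identification $\delta(2S,W)=\delta((2S,W),Y)$ recorded in Notation \ref{virtual:defect}, yields the claim. For the upper bound I would induct on the codimension $d:=\dim Y-\dim W$, the case $d=0$ being trivial. In the inductive step, choose a multiprojective subspace $W'$ with $W\subseteq W'\subsetneq Y$ and $\dim Y-\dim W'=1$ (for instance, keep all factors of $Y$ except reduce by one the dimension of a factor in which $W$ has strictly smaller dimension than $Y$); such a $W'$ is a divisor in $|\Oo_Y(\epsilon_i)|$ for some $i$. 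Since $S\subseteq W\subseteq W'$, the residue of $(2S,Y)$ with respect to $W'$ equals $S$ and the intersection equals $(2S,W')$, giving the residual exact sequence
\begin{equation*}
0\to\Ii_{S,Y}(\hat\epsilon_i)\to\Ii_{(2S,Y)}(1,\ldots,1)\to\Ii_{(2S,W'),W'}(1,\ldots,1)\to 0.
\end{equation*}
Since $\Oo_Y(\hat\epsilon_i)=\eta_i^\ast\Oo_{Y_i}(1,\ldots,1)$, the vanishing $H^p(\Oo_Y(\hat\epsilon_i))=0$ for $p\ge 1$ implies $H^2(\Ii_S(\hat\epsilon_i))=0$, and taking cohomology produces $\delta(2S,Y)\le h^1(\Ii_S(\hat\epsilon_i))+\delta(2S,W')$. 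Because $\Oo_Y(\hat\epsilon_i)$ is base-point free, the evaluation $H^0(\Oo_Y(\hat\epsilon_i))\to H^0(\Oo_S)$ has rank at least one, so $h^1(\Ii_S(\hat\epsilon_i))\le\#S-1$; combining with the inductive bound $\delta(2S,W')\le\delta(2S,W)+(\#S-1)(d-1)$ closes the induction.

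For part \ref{a1unfattore}, I would exploit that since $W$ is a factor of $Y$, the Segre image $\Lambda:=\nu(W)$ is a \emph{linear} subspace of $\PP^N$ of dimension $n_i=\dim W$, so $\langle\nu(2S,W)\rangle=\Lambda$. Writing each $p_m\in S$ as $p_m=([w^{(1)}],\ldots,[v_i^{(m)}],\ldots,[w^{(k)}])$ with fixed $w^{(l)}\in V_l$ for $l\neq i$, the hypothesis that $\nu(S)$ is linearly independent is equivalent to $\{v_i^{(m)}\}_{m=1}^{s}$ being linearly independent in $V_i$, where $s:=\#S$. Choose complements $V_l=\KK w^{(l)}\oplus V_l^{(c)}$ for $l\neq i$ and set $V_i^S:=\sum_{m}\KK v_i^{(m)}\subseteq V_i$. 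A direct expansion of
$\hat T_{\nu(p_m)}X=\sum_{j=1}^{k}V_j\otimes\bigotimes_{l\neq j}\KK v_l^{(m)}$
(with the convention $v_l^{(m)}:=w^{(l)}$ for $l\neq i$) and summation over $m$ yields
\begin{equation*}
\sum_{m=1}^{s}\hat T_{\nu(p_m)}X\;=\;\hat\Lambda\;+\;\sum_{j\neq i}V_j^{(c)}\otimes V_i^S\otimes\bigotimes_{l\neq j,i}\KK w^{(l)}.
\end{equation*}
Each summand on the right lies in a distinct component of the fine tensor decomposition of $V_1\otimes\cdots\otimes V_k$ induced by the chosen splittings, so the sum is direct. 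Since $\dim V_i^S=s$, this gives $\dim\langle\nu(2S,Y)\rangle=n_i+s(\dim Y-n_i)$, and then $\delta(2S,Y)=s(\dim Y+1)-1-n_i-s(\dim Y-n_i)=(s-1)(n_i+1)=\delta(2S,W)$ follows by direct arithmetic.

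The main obstacle is the direct-sum assertion in part \ref{a1unfattore}: the individual tangent spaces $\hat T_{\nu(p_m)}X$ all contain $\hat\Lambda$ and overlap pairwise beyond it (in particular along $\KK\nu(p_m)$ at each point), so a clean decomposition of their sum is not formal. It is precisely the linear independence of $\nu(S)$, which forces $\dim V_i^S=s$, that makes the normal pieces at distinct points of $S$ land in separate tensor components and be summed directly.
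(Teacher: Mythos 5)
Your proof is correct. Part \ref{a1disuguaglianze} follows essentially the same route as the paper: induction on the codimension $\dim Y-\dim W$, a codimension-one residual exact sequence with residue $S$ twisted by $\hat\epsilon_i$, and the bound $h^1(\Ii_S(\hat\epsilon_i))\le \#S-1$ coming from $h^1(\Oo_Y(\hat\epsilon_i))=0$ together with global generation. Part \ref{a1unfattore}, however, is genuinely different. The paper again argues by induction (on the number of factors of $Y$ and on the dimensions of the remaining factors), reusing the residual sequence of part \ref{a1disuguaglianze} and observing that the hypothesis that $\nu(S)$ is linearly independent forces $h^1(\Ii_S(\hat\epsilon_i))=0$ at each codimension-one step, so that the inequality of \ref{a1disuguaglianze} degenerates to an equality all the way up. You instead compute both sides in closed form: identifying $\langle\nu(2S,Y)\rangle$ with the projectivization of $\sum_m\hat T_{\nu(p_m)}X$, you show via the splittings $V_l=\KK w^{(l)}\oplus V_l^{(c)}$ that the normal contributions at the distinct points land in pairwise distinct components of the induced decomposition of $V_1\otimes\cdots\otimes V_k$, whence $\dim\langle\nu(2S,Y)\rangle=n_i+s(\dim Y-n_i)$ and $\delta(2S,Y)=(s-1)(n_i+1)=\delta(2S,W)$ (the last equality because $h^0(\Ii_{(2S,\PP^{n_i})}(1))=0$). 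Your direct-sum verification is sound, and the hypothesis enters exactly where you say it does, via $\dim V_i^S=s$. What the explicit computation buys is the actual value $(s-1)(n_i+1)$ of the common defect and a coordinate-level picture of where the degeneracy lives; what the paper's inductive argument buys is uniformity with part \ref{a1disuguaglianze} and avoidance of any tensor bookkeeping.
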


\begin{proof}
Since the restriction map $H^0(Y,\Oo _Y(1,\dots ,1)) \to H^0(W,\Oo_W(1,\dots ,1))$ is surjective and $(2S,W)\subseteq (2S,Y) $, the first inequality of part \ref{a1disuguaglianze} is the first inequality of \eqref{eqm1}. So we just need to prove the second inequality of \ref{a1disuguaglianze}: we will do it by induction on the integer $ \dim Y - \dim W$.

First assume $\dim Y=\dim W+1$. Thus there is $i\in \{1,\dots ,k\}$ such that $W\in |\Oo_Y(\epsilon _i)|$. Note that $W\cap
(2S,Y) = (2S,W)$ and that $\Res_W(2S,Y) = S$. Thus the residual exact sequence of $W$ gives the following
exact sequence
\begin{equation}\label{eqa1}
0 \to \Ii _S(\hat{\epsilon}_i) \to \Ii _{(2S,Y)}(1,\dots ,1) \to \Ii _{(2S,W)}(1,\dots ,1)\to 0.
\end{equation}
Since the restriction map $H^0(Y,\Oo _Y(1,\dots ,1)) \to H^0(W,\Oo_W(1,\dots ,1))$ is surjective,  $h^1\left(Y,\Ii
_{(2S,W)}(1,\dots ,1)\right) =h^1\left(W,\Ii
_{(2S,W)}(1,\dots ,1)\right)$. Since $S$ is a finite set, $h^i(\Ll ) =0$ for all $i>0$ and all line bundles $\Ll$ on $S$. The
long cohomology exact sequence of the exact sequence $$0\to \Ii _S(\hat{\epsilon}_i) \to \Oo_Y(\hat{\epsilon} _i)\to
\Oo_S(\hat{\epsilon} _i)\to 0$$ gives $h^2\left(\Ii _S(\hat{\epsilon} _i)\right) =h^2\left(\Oo_Y(\hat{\epsilon}_i)\right)=0$. Since $h^1\left(\Oo _Y(\hat{\epsilon}_i)\right) =0$ and $\Oo _Y\left(\hat{\epsilon}_i\right)$ is globally generated, $h^1\left(\Ii _S(\hat{\epsilon}_i))\right)\le \#S -1$. Thus \eqref{eqa1}
gives part \ref{a1disuguaglianze}.
Note that we have $h^1\left(W,\Ii _{(2S,W)}(1,\dots ,1)\right)
=  
h^1\left(Y,\Ii _{(2S,Y)}(1,\dots ,1)\right)$ if $h^1\left(\Ii _S(\hat{\epsilon}_i)\right)=0$. 

Now assume $\dim Y \ge \dim W +2$. We can always find a multiprojective space $ M$ such that $W\subsetneq M\subseteq Y$ and in particular we take $ M \in \vert \mathcal{O}_Y(\varepsilon_i) \vert$ for some $i $.
The inductive step follows by applying the codimension one case to the inclusion $M\subset Y $ and we conclude by applying the inductive assumption on the inclusion $ W\subset M$.

Assume that $ W$ is isomorphic to a factor of $ Y$, say $ Y\cong W\times Y'$. We will show \ref{a1unfattore} by induction on the number of factors of $ Y'$. Assume $ Y$ has two factors i.e. $Y=W\times \mathbb{P}^m $, for some $m>0$, where $W\cong W'\times \{o \} $ for some $ o\in \mathbb{P}^m$ and some positive dimensional projective space $W' $. We will work by induction on $m\geq1 $.\\
First assume $m=1 $, so $W \in \vert \mathcal{O}_Y(\varepsilon_2)\vert $ and in particular $W=\pi_2^{-1}(o) $ where $o \in \mathbb{P}^1 $.
We remark that the Segre embedding $\nu _2$ of $W$ can be seen as the restriction to $W$ of the Segre embedding of $Y$. Thus $\nu (S)$ is linearly independent if and only if $\nu _2(S)$ is linearly independent. Note that the linear independence of $\nu _2(S)$ is equivalent to $h^1\left(\mathcal{I}_S(1,0)\right)=0$ because $\pi _2(S)=\{o \}$. Since we already proved part \ref{a1disuguaglianze} and $h^1\left(\mathcal{I}_S(1,0)\right)=0$ we get the result.

Assume now $m\geq 2 $ and fix $H\in \vert \mathcal{O}_Y(\varepsilon_2) \vert$ containing $W $. By induction we get $\delta(2S,W)=\delta(2S,H) $. Since $H $ is a divisor of $ Y$ and $h^1(\mathcal{I}_S(0,1)) =0$ we get the result by applying the base case of \ref{a1disuguaglianze}.

Assume now $Y $ has $k\geq3 $ factors, i.e. $Y\cong W \times Y' $ where $Y' $ is a multiprojective space with at least two factors. Let $\mathbb{P}^{n_k} $ be the last factor of $Y $, again we will show the result by induction on $n_k\geq 1 $. If $n_k=1 $, one can always find $M\in \vert\mathcal{O}_Y(\varepsilon_k)  \vert $ containing $ W$ and by induction we get $\delta(2S,W)=\delta(2S,M) $. We remark as before that the Segre embedding of $ \nu(S)$ is linearly independent if and only if $\nu_k(S) $ is linearly independent and this is equivalent to say that $h^1(\mathcal{I}_S(\hat{\varepsilon}_k))=0 $. Since $M=\pi_k^{-1}(o) $, for some $o \in \mathbb{P}^1 $ we get the result by applying \ref{a1disuguaglianze}.

Assume now $n_k\geq2 $, and take some $M\in \vert \mathcal{O}_Y(\varepsilon_k) \vert $ containing $W $. By induction we get $\delta(2S,W)=\delta(2S,M) $, since $h^1(\mathcal{I}_S(\hat{\varepsilon}_k))=0 $ and $M $ is a divisor of $Y $ we get $\delta(2S,M)=\delta(2S,Y) $ by \ref{a1disuguaglianze}. 
\end{proof}

\begin{lemma}\label{a6}
Let $Y:= \PP^{n_1}\times \PP^{n_2}$ and $Y'\subseteq Y$ with $Y' := \PP^{m_1}\times \PP^{m_2}$ for some $m_i>0$. Let $S\subset
Y'$ be a finite subset such that $Y'$ is the minimal multiprojective space containing $S$ and suppose that both $\pi _{1|S}$ and $\pi _{2|S}$ are
injective and both $\pi _1(S)$ and $\pi _2(S)$ are linearly independent. Then $m_1=m_2=\#S -1$ and $h^1\left(Y',\Ii
_{(2S,Y')}(1,1)\right) =h^1\left(Y,\Ii
_{(2S,Y)}(1,1)\right)$.
\end{lemma}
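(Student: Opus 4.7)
\emph{Plan.} First I would establish the numerical claim $m_1 = m_2 = \#S - 1$. Injectivity of $\pi_{i|S}$ yields $\#\pi_i(S) = \#S$, while linear independence of $\pi_i(S) \subset \PP^{m_i}$ bounds this above by $m_i + 1$. Minimality of $Y'$ forces $\langle \pi_i(S)\rangle = \PP^{m_i}$, giving the reverse inequality. Hence $\#S = m_1 + 1 = m_2 + 1$, and moreover $\pi_i(S)$ consists of $m_i + 1$ linearly independent points spanning $\PP^{m_i}$.

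For the cohomological equality, the plan is to build a flag of codim-$1$ inclusions
$$Y' = Z_0 \subset Z_1 \subset \cdots \subset Z_N = Y, \qquad N = (n_1 - m_1) + (n_2 - m_2),$$
in which each $Z_{j-1}$ is a divisor in $Z_j$ of class $\epsilon_{i_j}$: first lift the first factor from $\PP^{m_1}$ to $\PP^{n_1}$ one dimension at a time, then lift the second factor. At each step I invoke the codim-$1$ case of Lemma \ref{a1}(a), specifically the equality criterion recorded inside its proof: when $h^1(Z_j, \Ii_S(\hat\epsilon_{i_j})) = 0$, the residual exact sequence forces
$$h^1\bigl(Z_j, \Ii_{(2S,Z_j)}(1,1)\bigr) = h^1\bigl(Z_{j-1}, \Ii_{(2S,Z_{j-1})}(1,1)\bigr).$$
Chaining these equalities along the flag yields the desired identity between $h^1(Y',\Ii_{(2S,Y')}(1,1))$ and $h^1(Y,\Ii_{(2S,Y)}(1,1))$.

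The essential verification is therefore the vanishing $h^1(Z_j, \Ii_S(\hat\epsilon_{i_j})) = 0$ at every stage. For an $\epsilon_1$-step I must check $h^1(\Ii_S(0,1)) = 0$; via the sequence $0 \to \Ii_S(0,1) \to \Oo(0,1) \to \Oo_S \to 0$ this reduces to surjectivity of the evaluation map $H^0(Z_j, \Oo(0,1)) \to H^0(\Oo_S)$, which factors through evaluation at $\pi_2(S) \subset \PP^{m_2}$. Since $\pi_2(S)$ consists of $m_2 + 1$ linearly independent points in $\PP^{m_2}$, this factored map is actually a linear isomorphism. The $\epsilon_2$-steps are symmetric, using linear independence of $\pi_1(S)$ in $\PP^{m_1} \subseteq \PP^{n_1}$. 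No single step is genuinely hard; the only subtlety to watch is that the twist $\hat\epsilon_{i_j}$ has to be interpreted inside the ambient $Z_j$ at that stage (``the other factor''), and that the governing evaluation is always controlled by the fixed projection $\pi_{3-i_j}(S)$, which remains linearly independent no matter how far the ambient factor has been enlarged.
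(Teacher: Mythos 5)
Your proposal is correct and follows essentially the same route as the paper: the numerical claim from linear independence plus minimality, and the cohomological equality by running the codimension-one residual-sequence argument from the proof of Lemma \ref{a1}(a) along a flag from $Y'$ up to $Y$, with the required vanishing $h^1(\Ii_S(\hat\epsilon_i))=0$ supplied by the linear independence of $\pi_1(S)$ and $\pi_2(S)$. Your explicit check of that vanishing at every intermediate stage via evaluation at $\pi_{3-i}(S)$ is in fact slightly more careful than the paper's one-line justification.
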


\begin{proof}
Since $ \pi_i(S)$ is linearly independent and $ Y'$ is the minimal multiprojective space containing $ S$, then $m_1=m_2=\#S-1 $. Moreover since $ h^0\left(\Ii_S(1,0)\right)=h^0\left(\Ii_S(0,1)\right)=0$, then $h^1\left(\Ii _S(1,0)\right)=h^1\left(\Ii_S(0,1)\right) =0$. To conclude it is sufficient to use the proof of part \ref{a1disuguaglianze} of Lemma \ref{a1}.
\end{proof}

We recall here the Horace Differential Lemma   (\cite{ah1,ah4}, see also \cite{bccgo}).

\begin{lemma}[Horace Differential Lemma \cite{ah1,ah4}]\label{diff1}
Let $M$ be an integral projective variety, $D$ an integral effective Cartier divisor of $M$ and $\Ll$ a line bundle on $M$
such that
$h^i(\Ll) =0$ for all $i>0$ and $h^1(\Ll(-D))=0$. Set $n:= \dim M$. Let $Z\subsetneq M$ be a closed subscheme. Suppose
$h^1\left(M,\Ii_{\Res_D(Z)}\otimes
\Ll (-D)\right) =0$ and $h^1\left(D,\Ii _{Z\cap D,D}\otimes \Ll_{|D}\right)=0$. Fix $i\in \{0,1\}$. To prove that a general union $A$ of $Z$ and
one double point satisfies $h^i\left(\Ii_A\otimes \Ll\right)=0$ it is sufficient to prove that  $h^i\left(\Ii _{\Res_D(Z\cup (2o,D))}\otimes
\Ll(-D)\right)=0$ and $h^i\left(D,\Ii _{(Z\cap D)\cup \{o\}}\otimes \Ll _{|D})\right)=0$, where $o$ is a general point of $D$. 
Since $o$ is general in $D$,
$h^1\left(D,\Ii _{(Z\cap D)\cup \{o\}}\otimes \Ll _{|D})\right)=0$ if and only if $h^1\left(D,\Ii _{Z\cap D,D}\otimes \Ll_{|D}\right)=0$
and $h^0\left(D,\Ii _{Z\cap D,D}\otimes \Ll_{|D}\right)>0$.
The same trick works for a general union of $Z$ and finitely many double points.
\end{lemma}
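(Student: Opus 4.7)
The plan is to combine upper semicontinuity of $h^i(\Ii_\bullet \otimes \Ll)$ with a transverse one-parameter degeneration, finishing with the residual exact sequence along $D$. Fix $i\in\{0,1\}$ and set $A=Z\cup (2p,M)$ for general $p\in M$: by upper semicontinuity it suffices to exhibit a single specialization $A_0$ in the flat closure of such unions with $h^i(\Ii_{A_0}\otimes \Ll)=0$.

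I would build the degeneration by transverse approach to $D$: pick a smooth affine curve $T$ with marked point $0$ and a morphism $p\colon T\to M$ with $p(0)=o\in D$ a general point of $D$ and $p$ meeting $D$ transversely at $o$. The naive flat limit of $A_t=Z\cup (2p(t),M)$ at $t=0$ is $Z\cup (2o,M)$, whose residual exact sequence along $D$ reads
\begin{equation*}
0\to \Ii_{\Res_D(Z)\cup \{o\}}\otimes \Ll(-D)\to \Ii_{Z\cup (2o,M)}\otimes \Ll\to \Ii_{(Z\cap D)\cup (2o,D),D}\otimes \Ll_{|D}\to 0.
\end{equation*}

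The case $i=0$ is then direct: any $s_0\in H^0(\Ii_{Z\cup (2o,M)}\otimes \Ll)$ restricts on $D$ to a section vanishing on $(Z\cap D)\cup (2o,D)$, hence a fortiori on $(Z\cap D)\cup \{o\}$, which is zero by the hypothesis $h^0(D,\Ii_{(Z\cap D)\cup \{o\},D}\otimes \Ll_{|D})=0$. Thus $s_0$ is divisible by the equation of $D$, and the quotient $\sigma_0\in H^0(\Ll(-D))$ vanishes on $\Res_D(Z)\cup \{o\}\supseteq \Res_D(Z)$; the hypothesis $h^0(\Ii_{\Res_D(Z\cup (2o,D))}\otimes \Ll(-D))=h^0(\Ii_{\Res_D(Z)}\otimes \Ll(-D))=0$ forces $\sigma_0=0$, so $s_0=0$, and upper semicontinuity closes the case.

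For $i=1$ the argument is more delicate: applying the residual sequence to the naive limit would require the vanishings $h^1(\Ii_{\Res_D(Z)\cup \{o\}}\otimes \Ll(-D))=0$ and $h^1(D,\Ii_{(Z\cap D)\cup (2o,D),D}\otimes \Ll_{|D})=0$, which are strictly stronger than the supplied hypotheses. The differential refinement bypasses this via a more careful degeneration exploiting transversality of $p$ to $D$: informally, the first-order Taylor information of the family in $t$ modifies the special fiber so that its trace on $D$ involves only $\{o\}$ (rather than $(2o,D)$), at the cost of a residue vanishing that matches exactly the hypothesis $h^1(\Ii_{\Res_D(Z\cup (2o,D))}\otimes \Ll(-D))=h^1(\Ii_{\Res_D(Z)}\otimes \Ll(-D))=0$. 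Extension to finitely many double points then follows by iterating one at a time to distinct general $o_j\in D$. The main obstacle is the rigorous execution of the differentiation step in the $i=1$ case: the relevant family must be realized as a section of the relative ideal sheaf on $M\times T$ and its first Taylor coefficient extracted as a bona fide global section of $\Ll(-D)$; the non-degeneracy condition $h^0(D,\Ii_{Z\cap D,D}\otimes \Ll_{|D})>0$ recorded in the final clause of the lemma is precisely what makes this extraction produce a nontrivial output.
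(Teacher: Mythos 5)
The paper does not prove this lemma at all: it is quoted verbatim from Alexander--Hirschowitz \cite{ah1,ah4} (see also \cite{bccgo}), so there is no internal proof to compare against and your proposal must stand on its own. It does not. The entire content of the \emph{differential} Horace lemma is the $i=1$ step that you explicitly defer (``the main obstacle is the rigorous execution of the differentiation step''): one must replace the naive flat limit $Z\cup (2o,M)$ --- whose trace on $D$ is $(2o,D)$ of degree $n$ and whose residue contributes only the simple point $\{o\}$ --- by the Alexander--Hirschowitz two-stage (``vertical'') limit of the linear systems $H^0(\Ii_{Z\cup 2p(t)}\otimes \Ll)$, in which the degenerating double point contributes only $\{o\}$ to the trace and the full $(2o,D)$ to the residue. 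This is done by taking a putative nonzero family of sections $s_t$, normalizing by the leading power of $t$ and by the order of vanishing along $D$, and extracting either a nonzero element of $H^0(D,\Ii_{(Z\cap D)\cup\{o\},D}\otimes\Ll_{|D})$ or of $H^0(\Ii_{\Res_D(Z)\cup(2o,D)}\otimes\Ll(-D))$. Gesturing at ``first-order Taylor information'' without carrying this out leaves the lemma unproved; and your closing attribution of the condition $h^0(D,\Ii_{Z\cap D,D}\otimes\Ll_{|D})>0$ to this extraction is off --- that condition only enters the elementary last sentence of the statement (adding a general point of $D$ to the trace).

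There is a second, more concrete error: you read $\Res_D(Z\cup(2o,D))$ as $\Res_D(Z)$. A literal ideal-quotient computation does give $\Res_D((2o,D))=\emptyset$, but that reading trivializes the statement (the residue condition for $i=1$ would coincide with a standing hypothesis) and breaks the degree bookkeeping $\deg(2p,M)=n+1=1+\deg(2o,D)$; the intended scheme, as in every formulation of the differential Horace lemma, is $\Res_D(Z)\cup(2o,D)$. Under that correct reading even your $i=0$ argument via the naive limit fails: dividing a section by the equation of $D$ yields $\sigma_0$ vanishing on $\Res_D(Z)\cup\{o\}$, which is a \emph{proper subscheme} of $\Res_D(Z)\cup(2o,D)$, and the hypothesis $h^0\left(\Ii_{\Res_D(Z)\cup(2o,D)}\otimes\Ll(-D)\right)=0$ does not force sections vanishing only on the smaller scheme to be zero. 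So both cases genuinely require the differential degeneration, not just the naive one.
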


\begin{proposition}\label{1lu1}
Write $Y =\PP^{n_1}\times Y_1$ as in Notation \ref{Notation:Yi} and take a closed subscheme $Z_1\subset Y_1$.
Then $$\dim \langle \nu (Z_1)\rangle =(n_1+1)(\dim\langle\nu_1(Z_1)\rangle+1)-1.$$
\end{proposition}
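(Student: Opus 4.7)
The plan is to interpret $\nu(Z_1)$ as $\nu(\eta_1^{-1}(Z_1))$, i.e.\ the Segre image of $\PP^{n_1}\times Z_1\subset Y$: this is the only reading consistent with the claimed formula (for instance, when $Z_1$ is a reduced point the right-hand side is $n_1$, matching the linearly embedded $\PP^{n_1}=\nu(\PP^{n_1}\times\{*\})$ sitting inside $X$).

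I would then translate both linear spans into ranks of restriction maps, exactly as in the cohomological formula already recorded earlier in the paper. Setting $a:=\dim\langle\nu_1(Z_1)\rangle+1$ and $b:=\dim\langle\nu(Z_1)\rangle+1$, one has
\[
a=\dim\Image\bigl(\rho_1\colon H^0(Y_1,\Oo_{Y_1}(1,\ldots,1))\to H^0(Z_1,\Oo_{Z_1}(1,\ldots,1))\bigr),
\]
and similarly $b$ is the dimension of the image of the analogous restriction $\rho$ with source $H^0(Y,\Oo_Y(1,\ldots,1))$ and target $H^0(\eta_1^{-1}(Z_1),\Oo_{\eta_1^{-1}(Z_1)}(1,\ldots,1))$.

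The core step is Künneth. Because $\Oo_Y(1,\ldots,1)=\Oo_{\PP^{n_1}}(1)\boxtimes\Oo_{Y_1}(1,\ldots,1)$, one obtains canonical identifications
\[
H^0(Y,\Oo_Y(1,\ldots,1))\cong V_1^{\ast}\otimes H^0(Y_1,\Oo_{Y_1}(1,\ldots,1))
\]
and, since $\eta_1^{-1}(Z_1)=\PP^{n_1}\times Z_1$ scheme-theoretically,
\[
H^0(\eta_1^{-1}(Z_1),\Oo(1,\ldots,1))\cong V_1^{\ast}\otimes H^0(Z_1,\Oo_{Z_1}(1,\ldots,1)).
\]
Under these identifications $\rho$ becomes $\id_{V_1^{\ast}}\otimes\rho_1$, whose image is $V_1^{\ast}\otimes\Image(\rho_1)$ and therefore has dimension $(n_1+1)\cdot a$. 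Hence $b=(n_1+1)a$, which yields the claim after subtracting $1$.

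The only conceptual point to check is that the Künneth decompositions intertwine the two restriction maps on the nose, but this is automatic from the product structure of $\eta_1^{-1}(Z_1)$ and the functoriality of the external tensor product, so no genuine obstacle arises.
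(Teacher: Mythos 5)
Your proof is correct and follows essentially the same route as the paper's: after reading $\nu(Z_1)$ as $\nu(\eta_1^{-1}(Z_1))$ (the intended abuse of notation), the key step in both arguments is the K\"unneth identification $H^0(Y,\Oo_Y(1,\dots,1))\cong V_1^{\ast}\otimes H^0(Y_1,\Oo_{Y_1}(1,\dots,1))$ applied to the factorization $Y=\PP^{n_1}\times Y_1$. The only cosmetic difference is that you track the rank of the restriction map while the paper tracks $h^0$ of the ideal sheaf (its kernel), which are equivalent bookkeepings of the same computation.
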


\begin{proof}
By assumption $h^0\left(Y_1,\Ii _{Z_1,Y_1}(1,\dots ,1)) = h^0(\Oo_{Y_1}(1,\dots ,1)\right) -\dim\langle\nu_1(Z_1)\rangle-1$. The K\"{u}nneth formula gives
$$h^0\left(\Ii_{Z_1}(1,\dots ,1)\right) =(n_1+1)\left(h^0(\Oo_{Y_1}(1,\dots ,1)) -\dim\langle\nu_1(Z_1)\rangle-1\right)-1.$$ Since $h^0\left(\Oo_Y(1,\dots ,1)\right) =(n_1+1)h^0\left(\Oo
_{Y_1}(1,\dots ,1)\right)$, we get the lemma.
\end{proof}

\begin{proposition}\label{1lu2}
Fix a finite set $S\subset Y$. Assume that there exist and index $i\in \{1,\dots ,k\}$ for which the projection $\eta _{i|S}: Y \to Y_i$ is injective. If $\delta (2\eta
_1(S),Y_i)=0$, then also $\delta (2S,Y)=0$.
\end{proposition}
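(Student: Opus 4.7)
The plan is to prove $\delta(2S,Y)=0$ by showing directly that the evaluation map
\[
\mathrm{ev}\colon H^0(Y,\Oo(1,\dots,1))\ \longrightarrow\ H^0\bigl((2S,Y),\Oo(1,\dots,1)\bigr)
\]
is surjective. I would use the K\"unneth decomposition $H^0(Y,\Oo(1,\dots,1))\cong V_i^{*}\otimes H^0(Y_i,\Oo(1,\dots,1))$ arising from $Y=\PP^{n_i}\times Y_i$, together with the tangent-space splitting $T_sY=T_{p_s}\PP^{n_i}\oplus T_{t_s}Y_i$ at each $s=(p_s,t_s)\in S$, where $t_s=\eta_i(s)$. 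By injectivity of $\eta_{i|S}$, the set $T:=\eta_i(S)$ has cardinality $\#S$, so the target $H^0((2S,Y),\Oo)$ splits as $\#S$ independent double-point blocks.

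Fix a basis $x_0,\dots,x_{n_i}$ of $V_i^{*}$ and write every section as $\sigma=\sum_{a=0}^{n_i}x_a\otimes s_a$ with $s_a\in H^0(Y_i,\Oo(1,\dots,1))$. Given a prescribed value $\alpha_s\in\KK$, $\PP^{n_i}$-derivative $\beta_s\in T_{p_s}^{*}\PP^{n_i}$ and $Y_i$-derivative $\gamma_s\in T_{t_s}^{*}Y_i$ at each $s\in S$, the equations imposed on $\sigma$ at $(2s,Y)$ split into two blocks. The $(\alpha_s,\beta_s)$-block involves only the $(n_i+1)$ scalars $(s_a(t_s))_a$, and its coefficient matrix is the evaluation $V_i^{*}\longrightarrow \Oo_{(2p_s,\PP^{n_i})}$, $x_a\mapsto(x_a(p_s),(\mathrm{d}x_a)_{p_s})$; this is an isomorphism of $(n_i+1)$-dimensional vector spaces (a linear form vanishing on a double point is zero), so the required values $s_a(t_s)$ are uniquely determined. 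The $\gamma_s$-block reads
\[
\sum_{a=0}^{n_i}x_a(p_s)\,(\mathrm{d}s_a)_{t_s}\ =\ \gamma_s\quad\text{in }T_{t_s}^{*}Y_i,
\]
and since the coefficients $x_a(p_s)$ are not all zero it admits solutions for the differentials $(\mathrm{d}s_a)_{t_s}\in T_{t_s}^{*}Y_i$.

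In this way, for every $a=0,\dots,n_i$ and every $t\in T$, I would obtain prescribed values $s_a(t)\in\KK$ and differentials $(\mathrm{d}s_a)_t\in T_t^{*}Y_i$. The hypothesis $\delta(2\eta_i(S),Y_i)=0$ is exactly the surjectivity of the evaluation $H^0(Y_i,\Oo(1,\dots,1))\longrightarrow H^0((2T,Y_i),\Oo)$, so each $s_a$ can be chosen to realize its assigned data at $T$ in $Y_i$. Reassembling $\sigma=\sum_a x_a\otimes s_a$ then yields a section hitting the prescribed target under $\mathrm{ev}$, proving surjectivity and hence $\delta(2S,Y)=0$.

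The only real obstacle is the careful matching of the two decompositions (global sections via K\"unneth, tangent vectors via the product structure); once this compatibility is set up, the argument reduces to two elementary linear-algebra solvability statements over $\PP^{n_i}$ plus a single invocation of the hypothesis on $Y_i$.
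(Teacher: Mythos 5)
Your proof is correct, and it rests on exactly the same three ingredients as the paper's: the K\"unneth splitting $H^0(Y,\Oo(1,\dots,1))\cong V_i^{*}\otimes H^0(Y_i,\Oo(1,\dots,1))$, the hypothesis that $(2\eta_i(S),Y_i)$ imposes independent conditions, and the fact that a double point imposes independent conditions on linear forms. The organization, however, is genuinely different. The paper argues on the geometric side: it considers the fibres $\eta_i^{-1}(2t)$ for $t\in\eta_i(S)$, invokes Proposition \ref{1lu1} to see that their images under $\nu$ span linearly independent linear subspaces of dimension $(n_i+1)(\dim Y_i+1)-1$, and then observes that each $(2o,Y)$ sits inside the fibre over $\eta_i(o)$ and is itself independent by very ampleness, so the union is independent. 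You instead prove surjectivity of the evaluation map directly, by exhibiting the linear system at each $s=(p_s,t_s)$ as block-triangular: the value-plus-$\PP^{n_i}$-derivative block determines the scalars $s_a(t_s)$ uniquely (your observation that $V_i^{*}\to\Oo_{(2p_s,\PP^{n_i})}$ is an isomorphism), and the $Y_i$-derivative block is then solvable for the $(\mathrm{d}s_a)_{t_s}$ because $(x_a(p_s))_a\neq 0$; injectivity of $\eta_{i|S}$ guarantees the resulting $1$-jet prescriptions on the $s_a$ live at distinct points of $Y_i$, where the hypothesis realizes them. Your version is more self-contained (it bypasses Proposition \ref{1lu1} entirely) and makes the mechanism transparent at the level of explicit linear algebra; the paper's version is shorter once Proposition \ref{1lu1} is available and generalizes more readily to subschemes other than double points. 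The only housekeeping you rightly flag is the compatibility of the K\"unneth and tangent-space splittings (and the usual trivialization of the line bundle at each point so that ``values'' and ``differentials'' make sense); both are routine.
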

\begin{proof}
With no loss of generality we may assume $i=1$. Set $S':= \eta _1(S)$, $s:=\#S$ and $m:= n_2+\cdots +n_k =\dim Y_1$.  The submersion
$\eta _1: Y\to Y_1$ has the property that
$\eta _1(i^{\ast}(\Oo_{Y_1}(1,\dots ,1)))
\cong
\Oo_Y(\hat{\epsilon} _1)$ and this isomorphism induces an isomorphism of global section. By assumption $2S'$ imposes
$s(n-n_1+1)$ independent conditions to $H^0(Y_1,\Oo_{Y_1}(1,\dots ,1))$. Thus the scheme $\eta _1^{-1}(2S')$ imposes
$s(n-n_1+1)$ independent conditions to $H^0(\Oo_Y(\hat{\epsilon}_1))$. The scheme $\eta _1^{-1}(S')$ is the union of $s$
disjoint varieties isomorphic to $\PP^{n_1}$ and embedded by $\nu$ as linear spaces and $\eta _1^{-1}(2S')$ is the union of
the first infinitesimal neighborhoods of $\PP^{n_1}$ in $Y$. By Lemma
\ref{1lu1} the scheme
$(2\eta _1(S'),Y)$ imposes $s(n_1+1)(m+1)$ independent conditions to $H^0(\Oo_Y(1,\dots ,1))$, i.e. the $s$ connected
components of $\eta _1^{-1}(2S')$ spans linearly independent linear spaces. For each $o\in S$ the scheme $\eta _i^{-1}(2o') = 2\eta _i^{-1}(o')$, $o':= \eta _1(o)$,
contains the double point $(2o,Y)$. In the Segre embedding the scheme $\nu((2o,Y))$ gives $\dim Y+1$ independent conditions. Since the $s$
subspaces spanned by the connected components of $\nu(\eta _1^{-1}(2S'))$ are linearly independent, $\nu(2S,Y)$ is linearly independent, i.e. $\delta (2S,Y)=0$.
\end{proof}

\section{The examples}\label{section:examples}

The following examples will be crucial for the main theorem.

\subsection{Example: Products of $\mathbb{P}^1$'s with at most one $\mathbb{P}^2$ has non empty 3-rd Terracini locus if and only if it envolves at least 4 factors}\label{ex1}

\begin{proposition}
Let $ Y=\mathbb{P}^m\times (\mathbb{P}^1)^{k-1}$ for some $ k\geq 3$, with $ m\in \{ 1,2\}$.
Define $S:=\{ a,b,c\} \subset Y $ be such that
\begin{align*}
&a:= (a_1,u_2,\dots ,u_k), b:=(b_1,u_2,\dots ,u_k),
c :=(c_1,\dots,c_k), \mbox{ with } \\
&a_1,b_1,c_1\in \PP^m \mbox{ such that } a_1\neq b_1 \mbox{ and } u_i\neq c_i \mbox{ for all } i>1.
\end{align*}
Moreover if $ m=2$ assume also $ \dim \langle \pi_1(S)\rangle=2$.
Therefore 
    $S\in \TT(Y,3) $ if and only if $k\geq 4$. 
    \end{proposition}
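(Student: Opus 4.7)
My plan is to reduce the proposition to a single explicit computation: the number of independent conditions that the scheme $Z := (2S,Y)$ imposes on $H^0(\Oo_Y(1,\ldots,1))$. Both $h^0(\Ii_Z(1,\ldots,1))$ and $\delta(2S,Y) = h^1(\Ii_Z(1,\ldots,1))$ then follow from the cohomology exact sequence recalled in Notation \ref{virtual:defect}. Before carrying out the computation, I would check that $Y$ is the minimal multiprojective space containing $S$ (immediate from the hypotheses: $\pi_1(S)$ spans $\PP^m$ because $a_1\neq b_1$ when $m=1$ and by the non-collinearity assumption when $m=2$, while $\pi_i(S)=\{u_i,c_i\}$ spans $\PP^1$ for $i\geq 2$ since $u_i\neq c_i$), so that $S \in \TT(Y,3)$ is equivalent to $S \in \TT_1(Y,3)$.

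I would work with an adapted basis of $V_1\otimes\cdots\otimes V_k$: for $V_1$ take $\{e_1 := a_1,\ e_2 := b_1\}$, extended by $e_3 := c_1$ when $m=2$ (the three are linearly independent by the non-collinearity hypothesis); for each $V_i$ with $i\geq 2$ take $\{f_1^{(i)} := u_i,\ f_2^{(i)} := c_i\}$. I parametrize basis vectors by pairs $(i;j_2,\ldots,j_k)$ with $i\in\{1,\ldots,m+1\}$ and $j_l\in\{1,2\}$, and define the weight $w(j) := \#\{l:j_l=2\}$. Writing out the affine tangent spaces at $\nu(a),\nu(b),\nu(c)$ in this basis, one sees that $\hat T_{\nu(a)}X$ and $\hat T_{\nu(b)}X$ are supported on basis vectors of weight $w\in\{0,1\}$, while $\hat T_{\nu(c)}X$ is supported on basis vectors of weight $w\in\{k-2,k-1\}$. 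The entire proposition is then controlled by whether these two ``weight slabs'' overlap.

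For $k\geq 4$ the slabs $\{0,1\}$ and $\{k-2,k-1\}$ are disjoint, so the corresponding linear subspaces of $V_1\otimes\cdots\otimes V_k$ intersect only at $0$; a weight-by-weight basis count then gives that $Z$ imposes exactly $3k+2m-1$ independent conditions, yielding
\[h^0(\Ii_Z(1,\ldots,1)) = (m+1)\cdot 2^{k-1} - (3k+2m-1),\]
which is strictly positive for every $k\geq 4$, together with
\[\delta(2S,Y) = 3(m+k) - (3k+2m-1) = m+1 > 0,\]
so $S\in\TT(Y,3)$. For $k=3$ the two slabs overlap at $w=1=k-2$, and the weight-$1$ contribution of $\hat T_{\nu(c)}X$ is absorbed into the span of $\hat T_{\nu(a)}X + \hat T_{\nu(b)}X$: when $m=1$ one has $c_1\in V_1=\langle a_1,b_1\rangle$, so the vectors $(c_1;\cdot)$ at weight $1$ are linear combinations of $(1;\cdot),(2;\cdot)$ already in the span; when $m=2$ the new $e_3$-direction supplies exactly the single $(3;\cdot)$ slice missing at weight $1$. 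A short check then gives that $Z$ imposes the full $(m+1)\cdot 2^{k-1}$ conditions, so $h^0=0$ and $S\notin\TT(Y,3)$.

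The main obstacle is the bookkeeping in the weight-by-weight basis count — in particular for $m=1$, where $c_1=\alpha a_1+\beta b_1$ can realize several sub-cases ($c_1=a_1$, $c_1=b_1$, or $c_1\notin\{a_1,b_1\}$). The saving fact is that the dimension count is insensitive to these sub-cases: at weight $k-2$ the vector $(c_1;\cdot)$ either lies in the $2$-dimensional slice $\langle(1;\cdot),(2;\cdot)\rangle$ already present in $\hat T_{\nu(a)}X+\hat T_{\nu(b)}X$ (which happens only when $k=3$, so the slabs overlap) or adds a genuinely new one-dimensional direction (when $k\geq 4$, since the slice lives in the disjoint high-weight slab). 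Once this is in place, the dichotomy $k=3$ versus $k\geq 4$ becomes transparent from the weight slabs alone.
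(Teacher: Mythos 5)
Your argument is correct, and it takes a genuinely different route from the paper. You compute $\dim\bigl(\hat T_{\nu(a)}X+\hat T_{\nu(b)}X+\hat T_{\nu(c)}X\bigr)$ directly in an adapted monomial basis graded by the number of factors in which the index equals $c_i$; the disjointness (for $k\ge 4$) or overlap (for $k=3$) of the weight slabs $\{0,1\}$ and $\{k-2,k-1\}$ then settles everything, and I checked that your counts ($3k+2m-1$ conditions for $k\ge4$, hence $h^0=(m+1)2^{k-1}-(3k+2m-1)>0$ and $\delta=m+1$; the full $(m+1)2^{k-1}$ conditions for $k=3$, hence $h^0=0$) are all right. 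The paper instead gets $\delta(2S,Y)\ge m+1$ from the sub-configuration $S'=\{a,b\}$ lying in a fiber $\PP^m\times\{(u_2,\dots,u_k)\}$ via concision (Lemma \ref{a1}\ref{a1unfattore}), disposes of $k\ge4$ by the crude count $h^0(\Oo_Y(1,\dots,1))>\deg(2S,Y)$, and handles $k=3$ by residual exact sequences with respect to chosen divisors $H\in|\Oo_Y(\epsilon_i)|$, reducing to facts about tangent planes of quadric surfaces and $\sigma_2(\nu((\PP^1)^3))=\PP^7$. Your approach is more elementary and self-contained, yields the exact values of both $h^0$ and $\delta$ in all cases rather than just the sign, and makes the dichotomy $k=3$ versus $k\ge4$ structurally transparent; the paper's approach buys consistency with the residue/Horace machinery it reuses throughout and avoids the basis bookkeeping (which, as you note, is the only delicate point in your version, especially the sub-cases for $c_1$ when $m=1$ — though, as you correctly observe, the count is insensitive to them since $c_1\in\langle a_1,b_1\rangle$ in every sub-case).
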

    
\begin{figure}[H]
    \centering
    \includegraphics[width=15cm]{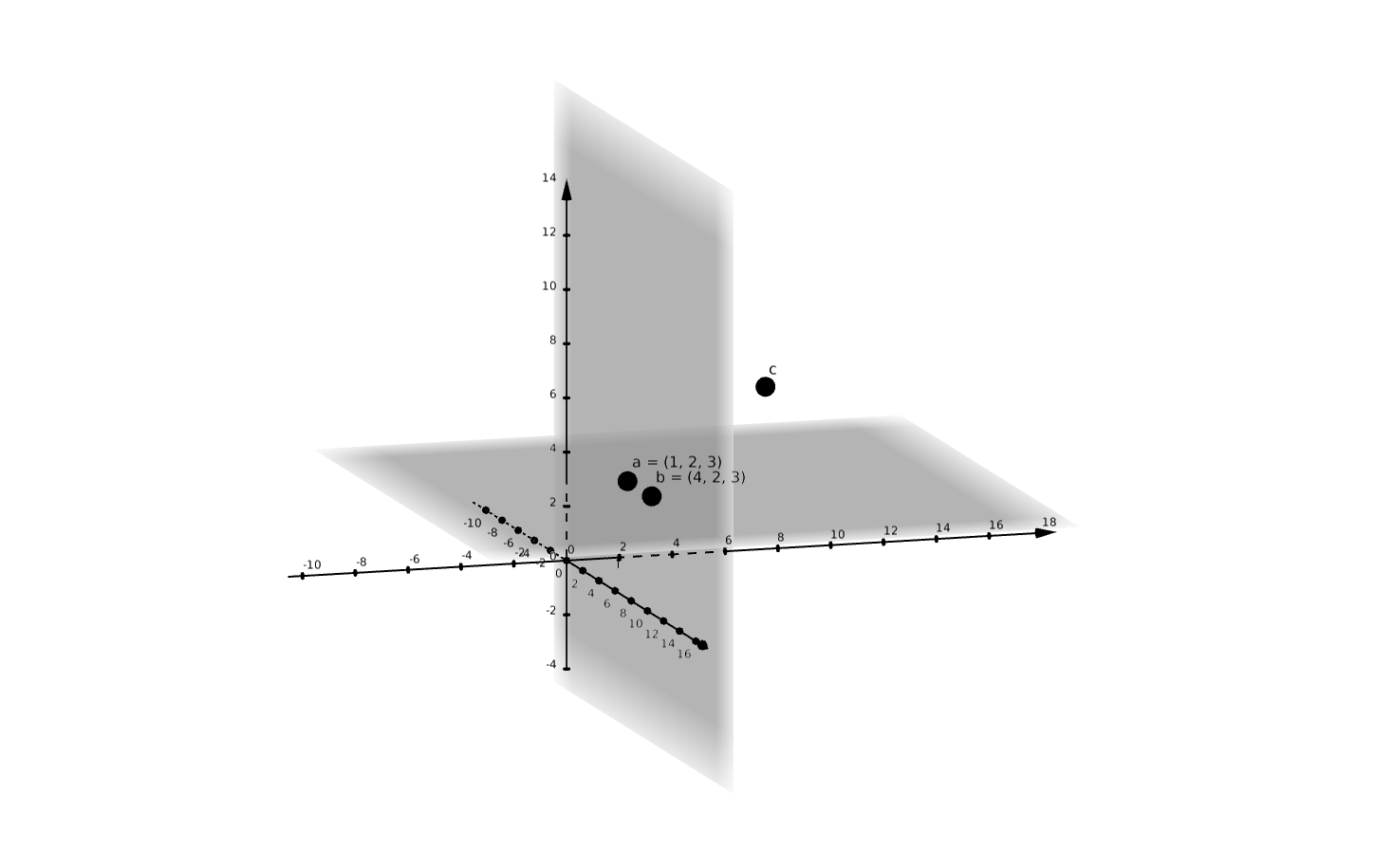}
    \caption{Picture of Proposition \ref{ex1} with $m=1$.}
    \label{ex1a:figure}
\end{figure}
\begin{figure}[H]
\centering
    \includegraphics[width=15cm]{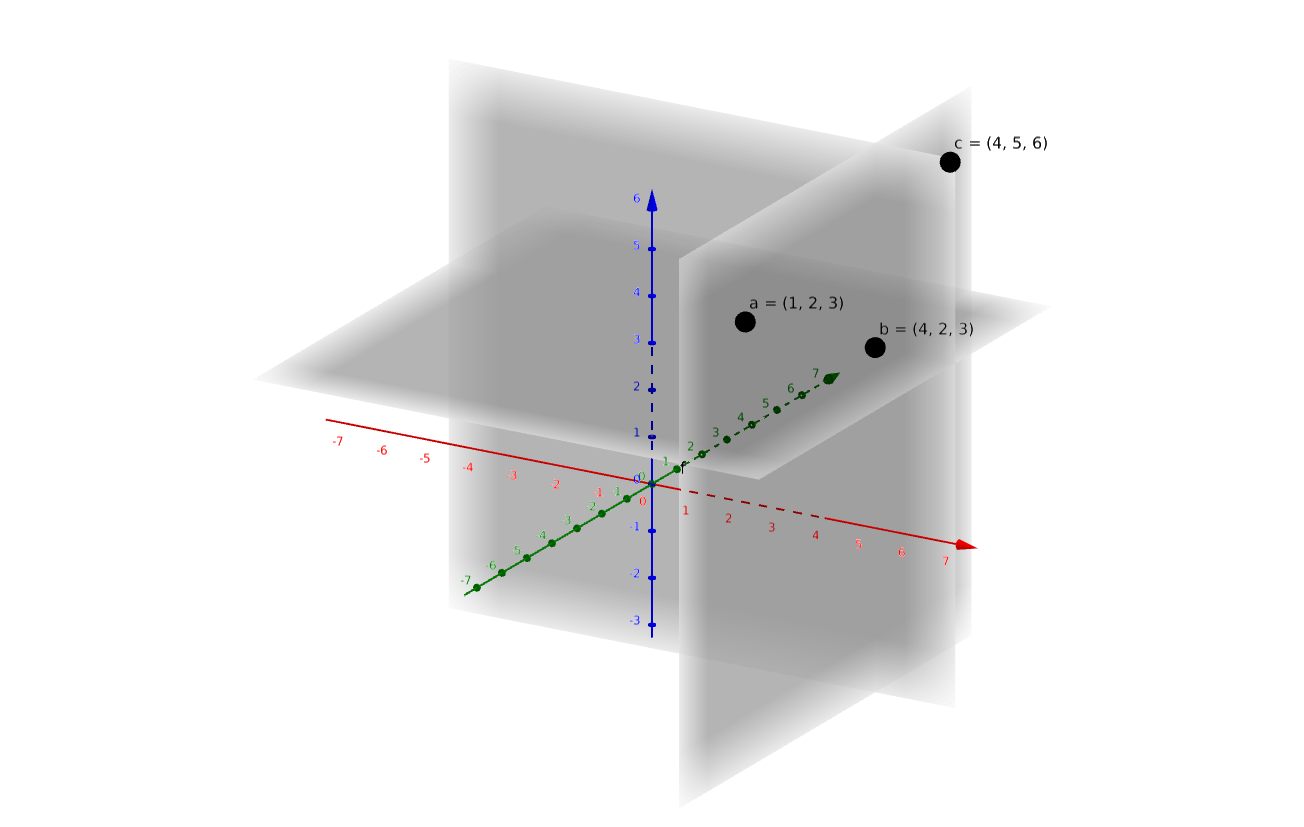}
    \caption{Pseudo-picture of Proposition \ref{ex1} with $m=2$ (the red axis is a $ \mathbb{P}^2$)}
    \label{ex1b:figure}
\end{figure}

\begin{proof}
We remark that since $ \#\pi_i(S)\geq 2$ for all $ i$'s and $\#\pi_1(S)=3 $ if $m=2 $, then $ Y$ is the minimal multiprojective space containing $ S$.
If we consider the subset $$S':=\{a,b \}$$ of $S$ we may apply Remark \ref{m1} and have that $\delta (2S,Y)\ge \delta (2S',Y)  $. Since $S'\subset \mathbb{P}^m\times \{ u_2\} \times \cdots \times \{ u_k \}\subset Y $, one can use case \ref{a1unfattore} of Lemma \ref{a1}, with $ W:=\mathbb{P}^m$, to get $$ \delta (2S',Y) = m+1.$$ Thus in order to see if $ S\in \mathbb{T}(Y,3)$, it suffices to understand whether  $$h^0\left(\mathcal{I}_{(2S,Y)}(1,\dots,1)\right)>0.$$ 

\begin{itemize}[leftmargin=+.2in]
    \item 
If $k\geq 4 $ then $h^0\left(\Oo_Y(1,\dots ,1) \right)=(m+1)2^{k-1} >3(m+k+1)=\deg(2S,Y) >0$ so we easily have that if $k\geq 4 $ then $S\in \mathbb{T}(Y,3) $.

\item

Let now $k=3 $. If we show that in this case none of the sets $S\subset Y$ as above belongs to $\TT(Y,3)$ we will be done.

Remark that by assumption $u_i \ne c_i$ for $i=2,3$.  To determine whether $ h^0\left(\Ii _{(2S,Y)}(1,1,1)\right) >0$ or not, we distinguish two cases depending on $m$ being equal to either 1 or  2.

\begin{enumerate}[leftmargin=+.2in, label=(\alph*)]
\item \label{ex1m=2}Assume $m=2$, i.e. $Y =\PP^2\times \PP^1\times \PP^1$.

Since $h^0(\Oo _Y(\epsilon _1))=3$, there exists $H\in|\Ii
_{\{a,c\}}(\epsilon 
_1)|$ and remark that  $$H\cong \PP^1\times \PP^1\times \PP^1.$$ Since $\langle \pi _1(S)\rangle =\PP^2$, then $H\cap S=\{a,c\}$.
Consider the residual exact sequence of $ S$ with respect to $ H$:
$$0\to \mathcal{I}_{\Res_{H}(2S,Y)}(0,1,1)\to \mathcal{I}_{(2S,Y)}(1,1,1) \to \mathcal{I}_{H\cap (2S,Y)}(1,1,1)\to 0.$$
Since $ H$ is smooth then $H\cap (2S,Y)= (2(S\cap H),H) = (2a\cup 2c,H)$ 
and the residue of $ (2S,Y)$ with respect to $ H$ is $$\Res _H(2S,Y )= \{a,c\}\cup (2b,Y).$$
Remark that $h^0\left(\Ii_{\Res_H(2S,Y)}(0,1,1)\right)=h^0\left(Y_1,\Ii _{\eta _1(\Res_H(2S,Y))}(1,1)\right)$. \\ 
Since
$\pi_i(a)=\pi_i(b) \neq \pi_i(c)$ for $i=2,3$, then $\eta_1(\Res_H(2S,Y)) =\eta _1(\{a,c\}\cup (2b,Y))=\eta _1(c) \cup (2\eta_1(b),Y_1)$.\\
In order to compute $h^0\left(Y_1,\mathcal{I}_{\eta _1(c) \cup (2\eta_1(b),Y_1)}(1,1)\right) $, we have to look at the hyperplanes of $\mathbb{P}^3 $  containing both   $\nu_1(\eta_1(c)) $ and $T_{\nu _1(\eta_1(b))}\nu _1(Y_1)$.
Note that the tangent space $T_{\nu _1(b)}\nu _1(Y_1)$ is the union of two lines through $\nu _1(b)$, i.e. the
image by
$\nu _1$ of the set of all $x\in Y_1$ with $\pi _2(x)=\pi _2(v)$ and the set of all $y\in Y_1$ with $\pi _3(y)=\pi _3(v)$. Thus, since $u_i\ne c_i$ for $i=2,3$, there are no such hyperplanes, hence $$h^0\left(Y_1,\Ii _{\eta _1(\Res_H(2S,Y)}(1,1)\right)=0.$$ So by the residual
sequence of $ S$ with respect to $H$ recalled above, it is sufficient to prove that $h^0\left(H,\Ii _{(2a\cup 2c,H)}(1,1,1)\right)=0$.

\smallskip

Since $\langle \pi_1(S)\rangle=\mathbb{P}^2  $ then $\pi _i(a) \ne \pi _i(c)$ for $i=1,2,3$. Since $H\cong \PP^1\times \PP^1\times \PP^1$ thus $\{a,c\}$ is in the open orbit of $\nu(\PP^1\times \PP^1\times \PP^1)$ for the action of $(\mathrm{Aut}(\PP^1))^3$ on $H$.
Since $\sigma _2(\nu (\PP^1\times \PP^1\times \PP^1)) =\PP^7$, then $$h^0\left(H,\Ii _{(2\{a,c\},H)}(1,1,1)\right)=0.$$ 

\item \label{ex1m=1}Assume $m=1$, i.e. $Y =(\PP^1)^3$. 

Fix $H\in |\Ii_a(\epsilon _3)|$. Since $u_i\ne c_i$ for $i=2,3$ and $H$ is smooth we have that $H\cap S =\{a,b\}$ and $\Res_H(2S,H) =\{a,b\}\cup (2c,Y)$. As in the last part of step \ref{ex1m=2}, we remark that $h^0(\Ii _{\Res_H(2S,Y)}(1,1,0))=h^0(Y_3,\Ii_{\eta_3(\{a,b\}\cup (2c,Y))}) $ and in order to compute it we have to look at the hyperplanes of $\mathbb{P}^3$ containing both $T_{\nu_3(\eta_3(c))}\nu_3(Y_3)$ and $\nu_3(\eta_3(\{a,b \}))$. \\ So $h^0(\Ii _{\Res_H(2S,Y)}(1,1,0)) =0$. Moreover, identifying $\nu (H)$ with a smooth quadric surface, by Proposition \ref{d1} we get $h^0(H,\Ii
_{(2S,Y)}(1,1,1))=0$ .
\end{enumerate}
\end{itemize}

Thus any set of points $ S\subset Y$ constructed as above is in the $3$-rd Terracini locus $  \TT(Y,3)$ if and only if $ k\geq 4$. 
\end{proof}

\subsection{Example: Products of $\mathbb{P}^1$'s with at most two $\mathbb{P}^2$'s}\label{a4.0}

\begin{proposition}
Let $Y:=\mathbb{P}^{n_1}\times \mathbb{P}^{n_2}\times (\mathbb{P}^1)^{k-2} $, where $n_1,n_2\in \{1,2 \}$ and  $k\geq 3$. Fix a line $L_1\subseteq \PP^{n_1}$, a line $L_2\subseteq \PP^{n_2}$.
Let $S:=\{o,u,v \} $ where
\begin{align*}
  u,v \in Y &\mbox{ such that } \pi_i(u)=\pi_i(v) \mbox{ for all } i>2 \mbox{ and } \langle \pi_j(u),\pi_j(v) \rangle=L_j \mbox{ for } j=1,2;\\
o \in Y& \mbox{ such that } \pi _i(o) \ne \pi_i(u) \mbox{ for all } i>2 \mbox{ and if } n_j=2 \mbox{ also assume } \pi _j(o)\notin L_j.
\end{align*}

Set $$Y':= L_1\times L_2\times \{\pi_3(u)\}\times \cdots \times \{ \pi_k(u) \}\subset Y.$$ 

Therefore
\begin{enumerate}[leftmargin=+.4in, label=(\roman*)]

\item \label{a4.0itemi}$2\le\delta(2S,Y)\le 5$.
\item \label{a4.0itemii} If $k\ge 4$ then $\delta (2S,Y) =2$.
\item \label{a4.0itemiii}If $k=3$ and $n_1=n_2=2$ then $ \delta(2S,Y)=2$ .
\item\label{a4.0itemiv}If $k=3$ and $n_1=n_2=1$ then $4\leq \delta(2S,Y)\leq 5$ and $h^0(\Ii_{(2S,Y)}(1,1,1)) >0$ if and only if $\pi _i(u)=\pi _i(o)$ and $\pi _h(v) = \pi _h(o)$ for some $i, h\in \{1,2\}$.   

\item\label{a4.0itemv} If $k=3$ and $\{n_1,n_2\} =\{1,2\}$ then $\delta(2S,Y) \ge 3$ and $h^0(\mathcal{I}_{(2S,Y)}(1,1,1))>0 $ if and only if $ \pi_2(o)\in \pi_2(S')$. 

\end{enumerate}
\end{proposition}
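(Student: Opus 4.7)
The plan is to isolate the ``baseline'' contribution from $S':=\{u,v\}$ and separately control the effect of adding the third point $o$. Set $Y':=L_1\times L_2\times\{\pi_3(u)\}\times\cdots\times\{\pi_k(u)\}\cong\PP^1\times\PP^1$, the minimal multiprojective space containing $S'$. On $Y'$ both projections of $S'$ span the two factors, and $\deg(2S',Y')=6>4=h^0(\Oo_{Y'}(1,1))$; since Proposition~\ref{d1} forces $h^0(\Ii_{(2S',Y')}(1,1))=0$, the restriction exact sequence yields $\delta(2S',Y')=2$. By the first inequality in part \ref{a1disuguaglianze} of Lemma~\ref{a1}, $\delta(2S',Y)\ge\delta(2S',Y')=2$, and Remark~\ref{m1} then gives $\delta(2S,Y)\ge 2$, the lower bound of (i).

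For part (ii), when $k\ge 4$, pick two distinct indices $j_1,j_2\in\{3,\dots,k\}$ and cut by the hyperplane $H_{j_1}:=\pi_{j_1}^{-1}(\pi_{j_1}(o))\in|\Oo_Y(\epsilon_{j_1})|$: by construction $H_{j_1}$ contains $o$ but not $u,v$, so $H_{j_1}\cap(2S,Y)=(2o,H_{j_1})$ and $\Res_{H_{j_1}}(2S,Y)=\{o\}\cup(2S',Y)$. The trace is a single double point in a space with at least three factors, hence $h^1(\Ii_{(2o,H_{j_1})}(1,\dots,1))=0$, and the residual is then analysed with $\Oo_Y(\hat\epsilon_{j_1})$. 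Cutting again by $H_{j_2}$, which now excludes all three points of $S$, one gets $\Res_{H_{j_2}}(\{o\}\cup(2S',Y))=(2S',Y)$ and twisted bundle $\Oo_Y(\hat\epsilon_{\{j_1,j_2\}})$. Since $\pi_i(u)=\pi_i(v)$ for $i>2$, Künneth plus injectivity of $\eta_{\{j_1,j_2\}}$ on $S'$ identifies this $h^1$ with $\delta(2S',Y^c)$ on $Y^c:=\PP^{n_1}\times\PP^{n_2}\times(\PP^1)^{k-4}$; iterating (or invoking Lemma~\ref{a6} in the base case) and using Proposition~\ref{1lu2} to discard defect-free factors reduces everything to $\delta(2S',Y')=2$, so $\delta(2S,Y)=2$.

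For the $k=3$ cases I use the identity
\[
\delta(2S,Y)-h^0\!\left(\Ii_{(2S,Y)}(1,1,1)\right)=\deg(2S,Y)-h^0\!\left(\Oo_Y(1,1,1)\right),
\]
coming from the restriction sequence of $(2S,Y)$; its right-hand side equals $0$, $4$, $3$ in cases (iii), (iv), (v) respectively. The task then is to compute $h^0(\Ii_{(2S,Y)}(1,1,1))$. For (iii), on $\PP^2\times\PP^2\times\PP^1$, a Horace Differential argument (Lemma~\ref{diff1}) with a divisor in $|\Oo_Y(\epsilon_3)|$ through $o$ reduces the computation to $\delta(2S',Y')=2$, giving $h^0=\delta=2$. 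For (iv), on $(\PP^1)^3$, the 2-dimensional space of $(1,1,1)$-forms vanishing on $(2S',Y)$ can be exhibited in explicit bilinear coordinates (essentially forms of the shape $z_1(a_{101}x_1y_0+a_{011}x_0y_1)$ in suitable affine charts adapted to $u,v$); writing out the two further tangent conditions at $2o$ and analysing their rank shows that they collapse to a single condition precisely when $\pi_i(o)=\pi_i(u)$ and $\pi_h(o)=\pi_h(v)$ for some $\{i,h\}=\{1,2\}$, yielding the stated iff and $\delta\in\{4,5\}$. For (v), the same explicit analysis applies; the hypothesis $\pi_2(o)\notin L_2$ (where $n_2=2$) forbids any alignment in the $\PP^2$ factor, forcing $h^0=0$ and $\delta=3$. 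Combining all these with (ii) delivers the upper bound $\delta(2S,Y)\le 5$ of (i).

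The main obstacle will be the sharp algebraic analysis in case (iv): one must pin down exactly which configurations of $(\pi_1(o),\pi_2(o))$ cause the two conditions imposed by $2o$ on the residual 2-dimensional space to collapse to a single condition, and exhibit the surviving form in those configurations. A secondary technical point is verifying, in the iterated residue reduction for (ii), that no intermediate step acquires extra $h^1$; this is controlled by the hypothesis $\pi_i(o)\ne\pi_i(u)$ for $i>2$ together with the linear independence of $\nu(S')$ propagated through part \ref{a1unfattore} of Lemma~\ref{a1}.
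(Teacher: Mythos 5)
Your lower bound $\delta(2S,Y)\ge 2$ is correct and follows the same route as the paper (compute $\delta(2S',Y')=2$ on $Y'\cong\PP^1\times\PP^1$, then monotonicity via Lemma \ref{a1} and Remark \ref{m1}). The rest of the proposal has genuine failures. In (ii), the identification of $h^1\bigl(\Ii_{(2S',Y)}(\hat\epsilon_{\{j_1,j_2\}})\bigr)$ with $\delta(2S',Y^c)$ is false: the residue still contains the \emph{full} double points $(2u,Y)$ and $(2v,Y)$, of degree $\dim Y+1$ each, while sections of $\Oo_Y(\hat\epsilon_{\{j_1,j_2\}})$ are pulled back from $Y^c$ and hence impose no condition in the tangent directions of the factors $j_1,j_2$; each double point therefore inflates $h^1$ by the number of dropped dimensions. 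Concretely, for $k=4$ and $n_1=n_2=1$ one has $h^1\bigl(\Ii_{(2S',Y)}(1,1,0,0)\bigr)=10-4=6$, so your chain of residual sequences only yields $\delta(2S,Y)\le 6$, nowhere near the claimed value $2$. (Also, a divisor $H_{j_2}$ ``excluding all three points of $S$'' leaves the residue unchanged; for $\Res_{H_{j_2}}(\{o\}\cup(2S',Y))=(2S',Y)$ you need $o\in H_{j_2}$.) No argument that keeps $(2S',Y)$ intact in the residue while stripping factors from the line bundle can reach the sharp value; the paper instead makes a single cut through $o$ and analyses the residue via case \ref{a1unfattore} of Lemma \ref{a1} and the explicit geometry of $S'$.

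Your treatment of (v) proves the wrong statement: you conclude $h^0(\Ii_{(2S,Y)}(1,1,1))=0$ unconditionally, whereas item (v) asserts $h^0>0$ \emph{if and only if} $\pi_2(o)\in\pi_2(S')$. In the normalization $n_1=2$, $n_2=1$ used in the paper, that condition lives on the $\PP^1$ factor, which is \emph{not} constrained by the hypothesis $\pi_j(o)\notin L_j$ (imposed only when $n_j=2$); when $\pi_2(o)\in\{\pi_2(u),\pi_2(v)\}$ a direct computation gives $h^0=1$ and $\delta=4$, so an argument yielding $h^0=0$ throughout case (v) cannot be right --- you have misread which factor the alignment condition refers to, and the case genuinely splits in two. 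Finally, (iii) and (iv) are only sketches: the explicit-coordinate analysis you describe for (iv) is the right idea and its announced outcome agrees with the statement, but it is precisely the content that must be written out; and in (iii) the appeal to the Differential Horace Lemma is unjustified (the points are not general, and a single residual cut through $o$ only bounds $\delta$ by $4$, not $2$) --- the paper instead uses the group fixing $Y'$ pointwise to put $o$ in an open orbit and exhibits the sections through $(2S',Y)$ directly.
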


\begin{figure}[H]
\centering
    \includegraphics[width=15cm]{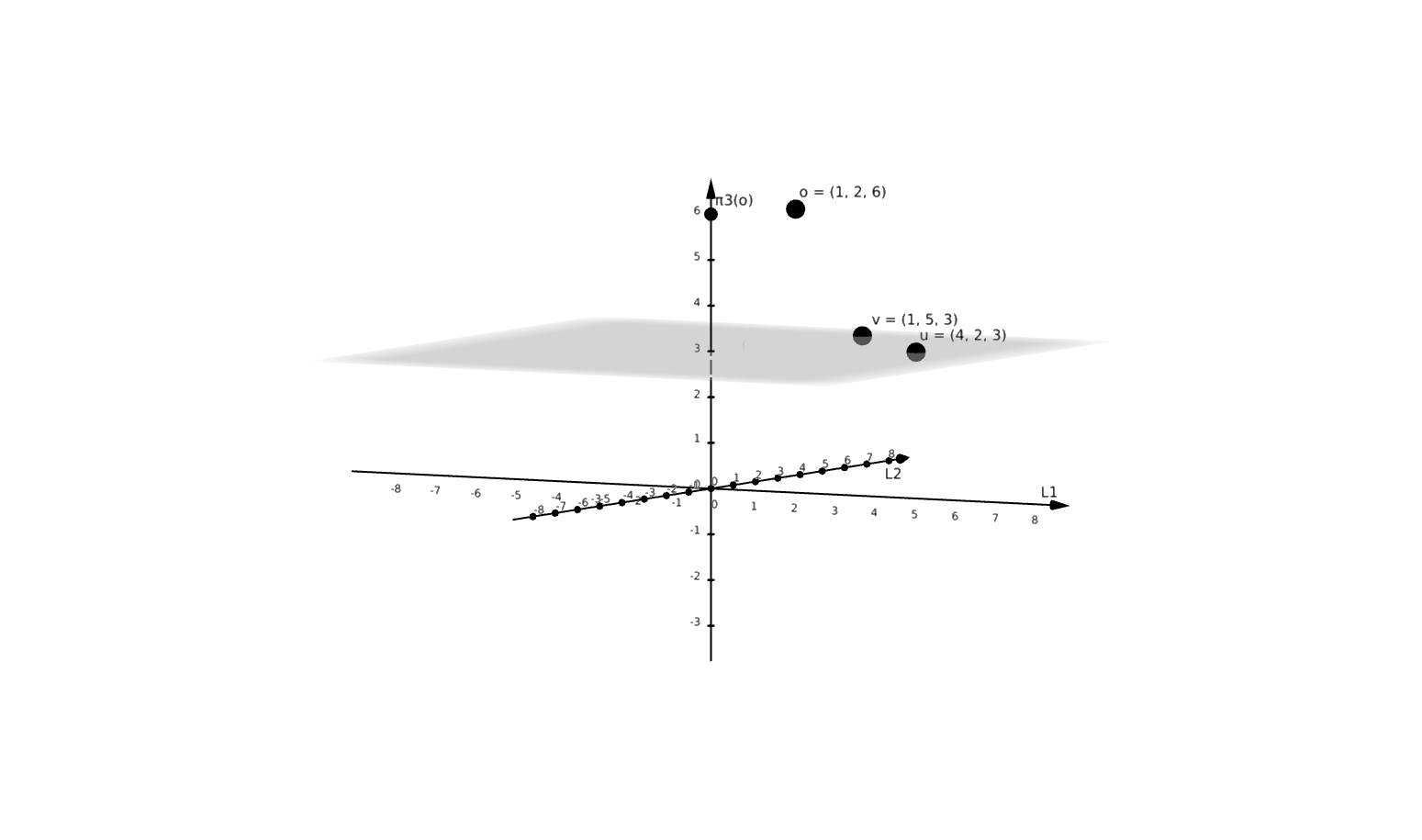}
    \caption{Picture of Proposition \ref{a4.0}.}
    \label{a4.0.figure}
\end{figure}

\begin{proof}
Remark that $S':=\{u,v\}\subset Y'$, and $Y'$ is actually the minimal multiprojective space containing $S'$ while $Y $ is the minimal multiprojective space containing $S $. 

Since
$\pi _k(o)\ne \pi _{k}(u)$, there is $H\in |\Oo_Y(\epsilon _k)|$ such that $o\in H$ and $S'\not\subset H$. Consider the residual exact
sequence
\begin{equation}\label{eqa5}
0 \to \Ii _{(2S',Y)\cup \{o\}}(\hat{\epsilon}_k)\to \Ii _{(2S,Y)}(1,\dots ,1) \to \Ii_{(2o,H),H}(1,\dots ,1)\to 0.
\end{equation}

Since $\Oo_H(1,\dots ,1)$ is very ample,  in order to prove \ref{a4.0itemi} it would be sufficient to prove that $h^1\left(Y,\Ii
_{(2S',Y)\cup
\{o\}}(\hat{\epsilon}_k)\right)\le 5$, but we prefer to  firstly do all cases from which the first assertion follows.

\medskip

Remark that $h^1\left(\Ii _{S'}(\epsilon _1)\right) =  h^1\left(\Ii _{S'}(\epsilon _2)\right) = 0$ because $\#\pi _i(S') =2$ for $i=1,2$.
By \eqref{eqa5} this implies that $h^1\left(\Ii _{(2S',Y)}(\hat{\epsilon}_k)\right)=2$ and hence $h^1\left(Y,\Ii _{(2S',Y)\cup \{o\}}(\hat{\epsilon}_k)\right)\le
3$.

\begin{enumerate}[leftmargin=+.2in]
\item[\ref{a4.0itemii}]  Assume $k\ge 4$.
We remark that one can identify
$$ h^1\left(Y',\mathcal{I}_{(2S',Y')}(1,\dots,1)\right)=h^1\left(Y,\mathcal{I}_{(2S',Y)}(1,1,0,\dots,0)\right)=h^1\left(Y,\mathcal{I}_{(2S',Y)}(\hat{\epsilon}_k)\right),$$
where the last equality follows since $\#\pi_i(S')=1 $ for all $i>2 $ and the previous equality follows from case  \ref{a1unfattore} of Lemma \ref{a1}. 
Moreover, since $\pi _i(o)\ne \pi _i(u)$, we have $h^1\left(\Ii _{(2S',Y)}(1,1,0,\dots ,0)\right) =h^1\left(\Ii _{(2S',Y)\cup
\{o\}}(\hat{\epsilon}_k)\right) $. 

If $n_1=n_2=1$, then $h^0 \left(\Ii _{(2S',Y)}(1,1,0,\dots ,0)\right)=0$, $h^0\left(\mathcal{O}_Y(1,1,0,\dots,0)\right)=4 $ and $ \deg(2S',Y)=6$. So $h^1\left(\Ii _{(2S',Y)}(1,1,0,\dots ,0)\right)=2 $.

If $\{n_1,n_2 \}=\{ 2,1\} $, one can use case \ref{a1disuguaglianze} of Lemma \ref{a1} with $W=(\mathbb{P}^1)^k $ and  get the same result because $h^1\left(\mathcal{I}_{S'}(\hat{\epsilon}_1)\right)=0 $. 

If $n_1=n_2=2 $, again by case \ref{a1disuguaglianze} of Lemma \ref{a1} applied to $ W=\mathbb{P}^{2}\times(\mathbb{P}^1)^{k-1}$ we get $h^1\left(\Ii _{(2S',Y)}(1,1,0,\dots ,0)\right)=2 $.

\item[\ref{a4.0itemiii}]  Assume $k=3$ and $n_1=n_2=2$. 

Consider the subgroup $G$ of $\mathrm{Aut}(\PP^2)\times \mathrm{Aut}(\PP^2)\times \mathrm{Aut}(\PP^1)$
fixing pointwise $Y'$. The action of $G$ on $Y$ has an open orbit $U$,
and $o\in U$. 
We remark that since $\#\pi_3(S')=1 $, one can identify $ h^1\left(Y,\mathcal{I}_{(2S',Y)}(1,1,1)\right)=h^1\left(Y,\mathcal{I}_{(2S',Y)}(\hat{\epsilon}_3)\right)$.
Moreover, since $\pi _3(o)\ne \pi _3(u)$, we have $h^1\left(\Ii _{(2S',Y)}(\hat{\epsilon}_3)\right) =h^1\left(\Ii _{(2S',Y)\cup
\{o\}}(\hat{\epsilon}_3)\right) $.

Thus to prove that $h^1\left(\Ii _{(2S',Y)\cup \{o\}}(\hat{\epsilon}_3)\right)=2$ and hence to prove part \ref{a4.0itemiii}, it is sufficient to observe that $h^0\left(W,\Ii _{(2A,W)}(1,1)\right) >0$, because not all
$3\times 3$ matrices have rank at most $ 2$, where we took $W=\mathbb{P}^2\times \mathbb{P}^2 $ and $A \subset W$ as a general subset of two distinct points.

\item[\ref{a4.0itemiv}]  Assume $k=3$ and $n_1=n_2=1$. 

Since $h^0\left(\Oo _Y(1,1,1)\right) =8$ and $\deg (2S,Y) =12$, we have $h^1\left(\Ii_{(2S,Y)}(1,1,1)\right) =4 +h^0\left(\Ii_{(2S,Y)}(1,1,1)\right)$, so $h^1\left(\Ii
_{(2S,Y)}(1,1,1)\right) \ge 4$. 
To conclude this case it is sufficient to show the following 
\begin{quote}
    \begin{claim}\label{a4.0claim} With the notation as above $h^0\left(\Ii _{(2S,Y)}(1,1,1)\right) >0$ if and only if $\pi _i(u)=\pi _i(o)$ for some $i\in \{1,2\}$ and $\pi _j(v) = \pi _j(o)$ for some $j\in \{1,2\}$. In this case $h^0\left(\Ii _{(2S,Y)}(1,1,1)\right)=1$ and $h^1\left(\Ii_{(2S,Y)}(1,1,1)\right) =5$.
    \end{claim}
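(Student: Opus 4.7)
The plan is to normalize, reduce to a two--dimensional pencil of $(1,1,1)$--forms by restricting to the divisor through $S'$, and then carry out a direct affine--chart computation of the final double--point condition.

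First, using the action of $(\mathrm{Aut}(\PP^1))^3$ (which preserves all the hypotheses of the statement), I would normalize $u=([0{:}1],[0{:}1],[0{:}1])$ and $v=([1{:}0],[1{:}0],[0{:}1])$ and write $o=(a,b,c)$ with $c_0\ne 0$. Let $H=\pi_3^{-1}([0{:}1])\in|\Oo_Y(\epsilon_3)|$; then $H\cong \PP^1\times \PP^1$ contains $u,v$ but not $o$. For any $T\in H^0(\Ii_{(2S',Y)}(1,1,1))$ with $S':=\{u,v\}$, the restriction $T|_H$ is a $(1,1)$--form on $H$ vanishing to order two at both points of $S'$. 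Since $\pi_1(u)\neq\pi_1(v)$ and $\pi_2(u)\neq\pi_2(v)$, the minimal multiprojective space containing $S'$ in $H$ is $H$ itself, and by the argument in Proposition \ref{d1} (case $k=2$) one has $h^0(H,\Ii_{(2S',H)}(1,1))=0$. Hence $T|_H=0$, so $T=z_0\,Q$, where $z_0$ is the equation of $H$ and $Q$ is the pullback of a $(1,1)$--form on $\PP^1\times\PP^1$.

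Next I would translate the remaining conditions into conditions on $Q$. At $u$ (and similarly at $v$) we have $z_0|_u=0$, so $T(u)$ vanishes automatically and the derivatives of $T=z_0Q$ in the first-- and second--factor directions also vanish automatically; only the third--factor derivative survives, and it equals $Q(\pi_{12}(u))$. Therefore the double points at $u,v$ impose exactly the conditions $Q(\pi_{12}(u))=Q(\pi_{12}(v))=0$, cutting out the pencil $Q=\alpha\,x_0y_1+\beta\,x_1y_0$. On the other hand, since $c_0\ne 0$ the element $z_0$ is a unit in $\Oo_{Y,o}$, so $T\in\mathfrak{m}_o^2$ iff $Q\in\mathfrak{m}_o^2$; because $Q$ is constant in the third factor, this is equivalent to $Q$ vanishing to second order at $(a,b)\in\PP^1\times\PP^1$.

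Finally, I would analyze when a nonzero element of the pencil $\langle x_0y_1,x_1y_0\rangle$ can vanish to second order at $(a,b)$. Choosing an affine chart adapted to $(a,b)$ and writing out the $3$ effective conditions (value plus one partial per factor), a short case check over the possibilities $a\in\{[0{:}1],[1{:}0]\}$ or not, and $b\in\{[0{:}1],[1{:}0]\}$ or not, shows that the system has a nonzero solution precisely when $(a,b)\in\{([0{:}1],[1{:}0]),([1{:}0],[0{:}1])\}$, i.e., $(\pi_1(o),\pi_2(o))=(\pi_1(u),\pi_2(v))$ or $(\pi_1(v),\pi_2(u))$, and in each case the solution space is the single line $\langle x_0y_1\rangle$ or $\langle x_1y_0\rangle$. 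This is exactly the condition ``$\pi_i(u)=\pi_i(o)$ and $\pi_j(v)=\pi_j(o)$ for some $i,j\in\{1,2\}$'' (automatically with $i\ne j$, since $\pi_i(u)\ne\pi_i(v)$). Since $h^0=1$ whenever it is positive, the count in \ref{a4.0itemiv} gives $h^1=h^0+4=5$. The only non--routine point is verifying the automatic vanishing of the first-- and second--factor partials of $T=z_0Q$ at $u,v$, which is what collapses the double point conditions at $S'$ from the expected $8$ conditions to the $2$ value conditions needed to obtain the pencil; everything else is a bookkeeping computation.
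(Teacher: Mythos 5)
Your argument is correct and is essentially the paper's proof in coordinates: writing $T=z_0Q$ after observing $h^0\left(H,\Ii_{(2S',H)}(1,1)\right)=0$ is exactly the residual exact sequence with respect to $H=\pi_3^{-1}(\pi_3(u))$, and the remaining conditions on $Q$ reproduce the paper's reduction to $h^0\left(Y_3,\Ii_{\eta_3(S')\cup(2\eta_3(o),Y_3)}(1,1)\right)$. The final pencil analysis is the same observation that a $(1,1)$-form singular at a point of $\PP^1\times\PP^1$ is the union of the two rulings through it, so both points of $\eta_3(S')$ must lie on those rulings.
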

    \medskip
    \begin{proof}
    Take $H\in |\Ii_{\{u\}}(\epsilon _3)|$. Since $\pi_3(u)=\pi_3(v)$, then $H\cap S =S'$. Since $H$
is smooth, $(2S,Y)\cap H= (2S',H)$ and $\Res _H(2S,Y) = S'\cup \{2o\}$. We identify $\nu (H)$ with a smooth
quadric surface $Q\subset \mathbb{P}^3$. 
Since a tangent plane to a smooth quadric surface $Q$ is tangent to $Q$ at a unique point, then we have the vanishing of $h^0\left(H,\Ii
_{(2S,Y)\cap H,H}(1,1,1)\right)$.

Consider the residual exact sequence of $ S$ with respect to $H$:
\begin{equation}\label{eqrr1}
0 \to \Ii _{S'\cup \{2o\}}(1,1,0)\to \Ii _{(2S,Y)}(1,1,1)\to \Ii_{(2S,Y)\cap H,H}(1,1,1)\to 0.
\end{equation}
Since $h^0\left(H,\Ii_{(2S,Y)\cap H,H}(1,1,1)\right)=0$, 
then  $$h^0\left(\Ii _{(2S,Y)}(1,1,1)\right)=h^0\left(\Ii _{S'\cup \{2o\}}(1,1,0)\right).$$
Moreover  $ h^0\left(\Ii _{S'\cup \{2o\}}(1,1,0)\right)=
h^0\left(Y_3,\Ii _{\eta _3(S')\cup (2\eta _3(o),Y_3)}(1,1)\right)$ and we can think of $\nu _3(Y_3)$ as a smooth quadric surface.
Since $T_{\nu _3(o)}\nu _3(Y)$ is a plane $h^0\left(Y_3,\Ii _{\eta _3(S')\cup (2\eta _3(o),Y_3)}(1,1)\right)\le 1$. 
\\
Now $h^0\left(Y_3,\Ii _{\eta _3(S')\cup (2\eta _3(o),Y_3)}(1,1)\right)= 1$ if and only if both $\nu _3(\eta _3(u))$
and $\nu _3(\eta _3(u))$ are contained in $\nu _3(Y_3)\cap T_{\nu _3(\eta_3(o))}(\nu _3(Y_3))$. 
We remark that $T_{\nu _3(\eta_3(o))}(\nu _3(Y_3))$ is the union of two lines through $\nu_3(\eta_3(o)) $, i.e. the
image by $\nu _3$ of the set of all $x\in Y_3$ with $\pi _1(x)=\pi _1(o)$ and the set of all $y\in Y_3$ with $\pi _2(y)=\pi _2(o)$. Hence Claim \ref{a4.0claim} is
just a translation of this observation.\end{proof} 
\end{quote}

\medskip

\item[\ref{a4.0itemv}] Assume $\{n_1,n_2\} =\{1,2\}$ and $k=3$. 

With no loss of generality we may assume $n_1=2$ and $n_2=1$. Since $h^0\left(\Oo
_Y(1,1,1)\right) =12$ and $\deg (2S,Y) =15$, we have $h^1\left(\Ii _{(2S,Y)}(1,1,1)\right)= 3 +h^0\left(\Ii_{(2S,Y)}(1,1,1)\right)$. Hence $\delta(2S,Y)\geq3 $.\\ We remark that
by assumption $\pi _1(o) \notin \langle \pi _1(u),\pi _1(v) \rangle$, $\pi _2(u)\ne \pi _2(v)$ and $\pi _3(o) \ne \pi_3(u)=\pi _3(v)$.\\ To conclude this case we have to show that $ h^0\left(\mathcal{I}_{(2S,Y)}(1,1,1)\right)>0$ if and only if $\pi_2(o)\in \pi_2(S') $.
\begin{itemize}[leftmargin=+.2in]
\item Assume $\pi _2(o) \in \pi _2(S')$. Without loss of generality we may assume that $\pi _2(u) =\pi _2(o)$. Since $ h^0\left(\mathcal{O}_Y(\epsilon_2)\right)=2$ then $|\Ii _o(\epsilon _2)| $ is a singleton. Set $\{H\}:= |\Ii _o(\epsilon _2)|$. Since $H\cong \PP^2\times \PP^1$ it is smooth, hence $(2S,Y)\cap H =(2\{o,u\},H)$ scheme-theoretically and $\Res _H(2S,Y) =(2v,Y)\cup \{o,u\}$. Remark that $h^0\left(\Ii _{(2v,Y)\cup \{o,u\}}(1,0,1)\right) =h^0\left(Y_2,\Ii _{(2\eta _2(v),Y_2)\cup \{\eta _2(o),\eta _2(u)\}}(1,1)\right)$. We remark that $Y_2\cong \PP^2\times \PP^1$ and  $h^0\left(\Oo_{Y_2}(1,1)\right) =6 =\deg ((2\eta _2(v),Y_2)\cup \{\eta _2(o),\eta _2(u)\})$. This last equality implies that
$$\qquad h^0\left(Y_2,\Ii _{(2\eta _2(v),Y_2)\cup \{\eta _2(o),\eta _2(u)\}}(1,1)\right) = h^1\left(Y_2,\Ii _{(2\eta _2(v),Y_2)\cup \{\eta _2(o),\eta _2(u)\}}(1,1)\right).$$ 
To show that $ h^0\left(Y_2,\Ii _{(2\eta _2(v),Y_2)\cup \{\eta _2(o),\eta _2(u)\}}(1,1)\right)>0$, we have to look at the hyperplanes of $\mathbb{P}^5\supset \nu_2(Y_2) $ that contain both the tangent space $T_{\nu _2(\eta _2(v))}\nu _2(Y_2)$ and the points  $\nu_2(\eta_2(\{ o,u\}))$. Remark that $T_{\nu _2(\eta _2(v))}\nu _2(Y_2) \cap \nu _2(Y_2)$ is the union of $2$ linear spaces containing $\nu _2(\eta _2(v))$, one of dimension $2$ and one of dimension $1$, spanning the $3$-dimensional projective space $T_{\nu _2(\eta _2(v))}\nu _2(Y_2)$. Since $\pi _3(u) =\pi _3(v)$, then $\nu _2(\eta _2(u))$ is a point of the $2$-dimensional irreducible component
of the tangent space $T_{\nu _2(\eta _2(v))}\nu _2(Y_2) \cap \nu _2(Y_2)$.\\ Thus   $h^0\left(\Ii _{\Res_H(2S,Y)}(1,0,1)\right) >0$. Hence, by the long cohomology exact sequence induced by the exact sequence of the residue of $ S$ with respect to $ H$, we get $h^0\left(\Ii _{(2S,Y)}(1,1,1)\right) >0$.

\item Assume $\pi _2(o)\notin \pi _2(S')$. Since $h^0\left(\mathcal{O}_Y(\epsilon_3)\right)=2 $ then $|\Ii _u(\epsilon _3)|$ is a singleton. Set $\{M\}:= |\Ii _u(\epsilon _3)|$. Since $M$ is smooth and $M\cap S =S'$, then $(2S,Y)\cap M = (2S',M)$ scheme-theoretically
and $\Res _M(2S,Y) = S'\cup (2o,Y)$. We have $h^0\left(\Ii _{S'\cup (2o,Y)}(1,1,0)\right) = h^0\left(Y_3,\Ii _{\eta _3(S')\cup (2\eta _3(o),Y_3)}(1,1)\right)$. Obviously $h^0\left(Y_3,\Ii _{(2\eta _3(o),Y_3)}(1,1)\right) =2$. Since $\eta _3(S)$ is in the open orbit of $S(Y_3,3)$ for the action of $\mathrm{Aut}(\PP^2)\times \mathrm{Aut}(\PP^1)$, $h^0\left(Y_3,\Ii _{\eta _3(S')\cup (2\eta _3(o),Y_3)}(1,1)\right)=0$. The set $S'$ is in the open orbit of $\mathrm{Aut}(M)$ for its action in $S(M,3)$. Since any $3\times 2$ matrix has rank at most $ 2$, we know that $\sigma _2(\nu (M)) =\PP^5$. Thus $h^0\left(H,\Ii _{(2S,Y)\cap H,H}(1,1,1)\right) =0$. The residual exact sequence of $ S$ with respect to $M$ gives $h^0\left(\Ii _{(2S,Y)}(1,1,1)\right) =0$.
\end{itemize}
In summary if $k=3 $ and $\{n_1,n_2 \}=\{1,2 \}$, then $S\in \TT(Y,3)$ if and only if $\pi _2(o)\in \pi _2(S')$.
\end{enumerate}
\vspace{-0.4cm}
\end{proof}

\section{Main theorem}\label{section:main}

In this section we prove the main theorem of the present paper.

\begin{remark}\label{azionesuP1}
Let  $Y=(\mathbb{P}^1)^k  $, for some $k\geq 2 $. Given any two subset $S,S'\in S(Y,3) $ such that $\# \pi_i(S)=\# \pi_i(S')=3 $ for all $ i$'s, one can always find $ f \in (\mathrm{Aut}(\mathbb{P}^1))^k$ such that $f(S)=f(S') $. Since $ Y$ is the minimal multiprojective space containing both $S $ and $ S'$, then $S\in \TT(Y,3)$ if and only if $S'\in\TT(Y,3)$.
\end{remark}

\begin{lemma}\label{lu2}
Let $Y=\mathbb{P}^1\times \mathbb{P}^1\times \mathbb{P}^1 $ and let $ S:=\{u,v,o \} \in S(Y,3)$ such that $Y $ is the minimal multiprojective space containing $S $. Then $S \in \TT(Y,3) $ if and only if there exist $h\in \{1,2,3\}$  and   $i,j\in \{1,2,3\}\setminus \{h\}$, with $i<j$ such that 
\begin{align*}
    \pi_h(u)=\pi_h(v)\neq &\pi_h(o),\, \pi_i(o)=\pi_i(u) \mbox{ and } \pi_j(o)=\pi_j(v),\\
  \mbox{where both }&  \pi_i(u)\neq \pi_i(v) \mbox{ and } \pi_j(u)\neq \pi_j(v).
\end{align*}

\end{lemma}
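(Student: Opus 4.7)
Since $Y=(\PP^1)^3$, $h^0(\Oo_Y(1,1,1))=8$ and $\deg(2S,Y)=12$, so the restriction sequence forces $h^1(\Ii_{(2S,Y)}(1,1,1))\ge 4>0$ for every $S$; hence $S\in\TT(Y,3)$ is equivalent to $h^0(\Ii_{(2S,Y)}(1,1,1))>0$, and this is the only cohomology to control. Minimality of $Y$ gives $\#\pi_i(S)\in\{2,3\}$ for each $i$; let $\mu_i\subset S$ be the unique pair agreeing on the $i$-th coordinate when $\#\pi_i(S)=2$, and $\mu_i=\emptyset$ otherwise. The hypothesis of the lemma is then the purely combinatorial condition that $\mu_1,\mu_2,\mu_3$ are all non-empty and pairwise distinct.

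For $(\Leftarrow)$, permuting the factors so that $h=3$ places $S$ exactly in the setup of the Proposition of Section~\ref{a4.0} with $n_1=n_2=1$ and $k=3$, and Claim~\ref{a4.0claim} there gives $h^0(\Ii_{(2S,Y)}(1,1,1))=1>0$, so $S\in\TT(Y,3)$. For $(\Rightarrow)$ I would enumerate the non-Terracini patterns of $(\mu_1,\mu_2,\mu_3)$. The pattern $\mu_1=\mu_2=\mu_3=\emptyset$ follows from Remark~\ref{azionesuP1} together with the non-defectivity $\dim\sigma_2(\nu((\PP^1)^3))=7$ recalled in the proof of Proposition~\ref{d1}: Terracini's lemma applied to the resulting generic triple gives $h^1=4$ and hence $h^0=0$. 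The pattern $\mu_a=\mu_b\ne\emptyset$, $\mu_c=\emptyset$ is, after a permutation of the factors, exactly the configuration of Example~\ref{ex1} with $m=1$ and $k=3$, which directly concludes $S\notin\TT(Y,3)$.

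For every other non-Terracini pattern one can pick an index $j$ such that $\mu_j\ne\emptyset$ and $\mu_j$ is distinct from both $\mu_{k_1}$ and $\mu_{k_2}$ (where $\{k_1,k_2\}=\{1,2,3\}\setminus\{j\}$, allowing $\mu_{k_\ell}=\emptyset$). Writing $\mu_j=\{A,B\}$ and $C=S\setminus\mu_j$, I would take $H\in|\Oo_Y(\epsilon_j)|$ through $\mu_j$, so that $H\cap S=\{A,B\}$ and $\Res_H(2S,Y)=\{A,B\}\cup(2C,Y)$. The choice of $j$ makes coordinates $k_1,k_2$ both separate $A$ and $B$ inside $H\cong\PP^1\times\PP^1$, so Proposition~\ref{d1} gives $h^0(H,\Ii_{(2\{A,B\},H),H}(1,1))=0$, and the residual sequence reduces the question to $h^0(Y_j,\Ii_{\{\eta_j(A),\eta_j(B)\}\cup(2\eta_j(C),Y_j)}(1,1))$. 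On the smooth quadric $\nu_j(Y_j)\subset\PP^3$ the only sections vanishing at $(2\eta_j(C))$ are multiples of the tangent plane $T_{\eta_j(C)}\nu_j(Y_j)$, whose intersection with $\nu_j(Y_j)$ is the union of the two rulings through $\eta_j(C)$. Requiring such a section to vanish also at $\eta_j(A)$ and $\eta_j(B)$ forces each of these points onto one of those rulings, which translates into $\{\mu_{k_1},\mu_{k_2}\}=\{\{A,C\},\{B,C\}\}$; combined with $\mu_j=\{A,B\}$ this is precisely the Terracini configuration, contradicting our standing assumption.

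The main obstacle is the combinatorial bookkeeping showing that such an index $j$ exists outside the two exceptional patterns handled above, plus carefully unpacking the ruling/incidence condition on the quadric $\nu_j(Y_j)$ --- both of which are short finite checks.
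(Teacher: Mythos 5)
Your proof is correct and follows essentially the same route as the paper: reduce to deciding when $h^0\left(\Ii_{(2S,Y)}(1,1,1)\right)>0$, classify $S$ by its coincidence pattern, and settle each pattern via the generic case, Example \ref{ex1}, and the residual sequence through the fiber containing an agreeing pair together with the tangent-plane-on-a-smooth-quadric computation that is exactly the content of Claim \ref{a4.0claim}. The only cosmetic difference is that you re-derive that computation in the remaining patterns where the paper simply cites case \ref{a4.0itemiv} of Example \ref{a4.0}.
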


\begin{proof}
Up to a permutation of the index $h \in \{1,2,3\}$ we may assume $h=3$.\\
Take $S\in S(Y,3)$ and let $X:=\nu(Y) $. If $\#\pi_i(S)=3$ for all $i$'s, since $\dim \sigma_3(X)=7$ (e.g. \cite{CGG07}), we have $h^0\left(\Ii_{(2S,Y)}(1,1,1)\right)=0$. Hence, by Remark \ref{azionesuP1}, we have that $S \notin \TT(Y,3) $.  Now assume $\#\pi _i(S)\le 2$ for some $i$. Remark that since $Y$ is the minimal multiprojective space containing $ S$ then $\#\pi_i(S)=2 $. We distinguish different cases depending on the number of indices $i \in \{1,2,3 \}$ for which $\#\pi_i(S)=2$. 
\begin{itemize}[leftmargin=+.2in]
    \item If there exists only an index $i$ such that $\#\pi_i(S)=2 $ then $ S$ is as in Example \ref{a4.0} and by case \ref{a4.0itemiv} of Example \ref{a4.0} we know that $ h^0\left(\Ii_{(2S,Y)}(1,1,1)\right)=0$. 
    \item If $\#\pi_i(S)=2 $ for two indices, then $S$ is as in Example \ref{a4.0} or as in Example \ref{ex1}. For both cases we have $ h^0\left(\Ii_{(2S,Y)}(1,1,1)\right)=0$.
    \item  Finally, if $\#\pi_i(S)=2 $ for all $i\in \{1,2,3 \} $, then $ S$ is as in Example \ref{a4.0} and by case \ref{a4.0itemiv} of Example \ref{a4.0} we get that $ h^0\left(\Ii_{(2S,Y)}(1,1,1)\right)=1$. 
\end{itemize}
\vspace{-5mm}
\end{proof}

\begin{remark}\label{lu1}
Fix $Y=(\mathbb{P}^1)^4 $ and let $ A\subset Y$ be a general subset of three distinct points. Since the $ 3$-rd secant variety of $ X:=\nu(Y)\subset \mathbb{P}^{15}$ is defective with defect $1 $ (cf. \cite{CGG, CGG05}), then $\dim(\sigma_3(X))=13$ and $ h^0\left(\mathcal{I}_{(2A,Y)}(1,1,1,1) \right)=2$, so by looking at the restriction exact sequence we get $ h^1\left(\mathcal{I}_{(2A,Y)}(1,1,1,1)\right)=1$. By the semicontinuity theorem for cohomology (cf. \cite[Ch. III \S 12]{hart}) $h^1\left(\Ii_{(2S,Y)}(1,1,1,1)\right)\ge 1$ and $h^0\left(\Ii_{(2S,Y)}(1,1,1,1)\right) \ge 2$ for all $S\in S(Y,3)$. Moreover we remark that $Y$ is the minimal multiprojective space containing $S\in S(Y,3) $ if and only if $\pi_i(S)\ge 2$ for all $i=1,2,3,4$.

Thus $S(Y,3)\subset \mathbb{T}_1(Y,3) $ and the $3$-rd Terracini locus $ \mathbb{T}(Y,3)$ contains all subsets $ S\in S(Y,3)$ such that $\pi_i(S)\ge 2$ for all $i$.
\end{remark}

\begin{remark}\label{o1}
Let $Y=(\mathbb{P}^1)^k  $ with $k\geq5 $. 
By \cite[Theorem 2.3]{CGG} we know that $\dim (\sigma_3(X))=3k+2 $. By looking at the restriction exact sequence of $2S $ with respect to $ Y$, we get that $h^1\left(\mathcal{I}_{(2S,Y)}(1,\dots,1)\right) =0$. 
So a general $S\subset Y $ with $\# S=3 $ is not in the $3$-rd Terracini locus $\mathbb{T}(Y,3) $.
Thus by Remark \ref{azionesuP1}, for all $S \in S(Y,3) $ such that $\# \pi_i(S)=3 $ then $S \notin \mathbb{T}(Y,3) $.
\end{remark}

\begin{lemma}\label{o2}
 Let $Y =(\PP^1)^k$ with $k\ge 5$. Fix $ S:=\{ a,b,c\} \in S(Y,3)$ such that $ Y$ is the minimal multiprojective space containing $S $. Assume that there are at least $k-2 $ indices $ i$'s for which $ \pi_i(a)=\pi_i(b)$. Then $S $ is either as in Example \ref{a4.0} or as in Example \ref{ex1}.
\end{lemma}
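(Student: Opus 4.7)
The plan is a short case analysis on the integer $\#I$, where $I:=\{i\in\{1,\dots,k\}\colon \pi_i(a)=\pi_i(b)\}$, using minimality of $Y$ to pin down the behaviour of $c$. By hypothesis $\#I\geq k-2$, and since $a\neq b$ there is at least one index outside $I$, so $\#I\in\{k-1,k-2\}$.

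First I would treat $\#I=k-1$. Let $i_0$ be the unique index with $\pi_{i_0}(a)\neq \pi_{i_0}(b)$. For every $j\neq i_0$ we have $\pi_j(a)=\pi_j(b)$, and since $Y=(\PP^1)^k$ is the minimal multiprojective space containing $S$, the condition $\#\pi_j(S)\geq 2$ forces $\pi_j(c)\neq \pi_j(a)=\pi_j(b)$. After permuting coordinates so that $i_0=1$ and writing $a_1:=\pi_1(a)$, $b_1:=\pi_1(b)$, $u_j:=\pi_j(a)=\pi_j(b)$ for $j>1$, and $c_j:=\pi_j(c)$, the triple $\{a,b,c\}$ is precisely of the form described in Example \ref{ex1} with $m=1$.

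Next I would handle $\#I=k-2$. Let $\{i_1,i_2\}=\{1,\dots,k\}\setminus I$. The same minimality argument forces $\pi_j(c)\neq \pi_j(a)$ for every $j\notin\{i_1,i_2\}$, while no constraint is imposed on $\pi_{i_1}(c)$ or $\pi_{i_2}(c)$ (the requirements $\#\pi_{i_\ell}(S)\geq 2$ are already guaranteed by $a$ and $b$). Permuting coordinates so that $\{i_1,i_2\}=\{1,2\}$, and setting $u:=a$, $v:=b$, $o:=c$, $L_1=L_2:=\PP^1$, we recognise the configuration of Example \ref{a4.0} with $n_1=n_2=1$; the extra clause ``$\pi_j(o)\notin L_j$ if $n_j=2$'' is vacuous here and does not enter.

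No substantive obstacle arises: the lemma is essentially a dictionary between the agreement pattern of $a$ and $b$ and the two example patterns, with minimality of $Y$ filling in the behaviour of $c$ on the factors of agreement. The hypothesis $k\geq 5$ plays no role in the set-up itself; it is simply inherited from the context in which the lemma will be applied.
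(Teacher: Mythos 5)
Your proposal is correct and follows essentially the same route as the paper: the paper also sets $E:=\{i\colon \pi_i(a)=\pi_i(b)\}$, notes $k-2\le\#E\le k-1$, permutes factors, and reads off Example \ref{ex1} (with $m=1$) or Example \ref{a4.0} (with $n_1=n_2=1$) according to whether $\#E=k-1$ or $k-2$. Your version just spells out more explicitly how minimality of $Y$ forces $\pi_j(c)\neq\pi_j(a)$ on the agreement indices, which the paper leaves implicit.
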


\begin{proof}
 Define $E:= \{i\in \{1,\dots ,k\}\mid \pi_i(a)=\pi_i(b)\}$, by assumption $ \# E \geq k-2 $ and since $a\neq b $ then $\#E\leq k-1 $. By permuting the factors of $ Y$ if necessary, one can always assume that $ E$ contains the last $k-2 $ indices and that the index $1 \notin E$. If $ 2 \notin E$ then $S $ is constructed as in Example \ref{a4.0} with $n_1=n_2=1 $, else $ S$ is as in Example \ref{ex1} where we took $m=1 $. 
\end{proof}

\begin{lemma}\label{o3}
Let $Y =(\PP^1)^k$ with $k\ge 5$. Fix $S\in S (Y,3)$ such that $Y$ is the minimal multiprojective subspace containing $S$. If $S\in \TT(Y,3)$ then $S$ is either as in Example \ref{a4.0} or as in Example \ref{ex1}.
\end{lemma}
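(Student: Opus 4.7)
The plan is a proof by contradiction. For each pair $\{x,y\}\subset S$ set $F_{xy}:=\{i:\pi_i(x)=\pi_i(y)\}$; since $Y$ is the minimal multiprojective space containing $S$, these three sets are pairwise disjoint subsets of $\{1,\dots,k\}$. By Lemma~\ref{o2} (applied after suitable relabeling) it is enough to show that $|F_{xy}|\geq k-2$ for some pair; so assume instead that $|F_{xy}|\leq k-3$ for all three pairs and aim to derive $h^1(\mathcal{I}_{(2S,Y)}(1,\dots,1))=0$, which forces $S\notin\mathbb{T}(Y,3)$. If all three cardinalities vanish (every projection tripleton), Remark~\ref{o1} combined with Remark~\ref{azionesuP1} already finishes; otherwise, after relabeling, let $|F_{ab}|\geq 1$ be the maximum and denote the remaining point of $S$ by $c$.

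The key step is a double residual reduction. Pick $i_1\in F_{ab}$: the divisor $H_1:=\pi_{i_1}^{-1}(\pi_{i_1}(c))\in|\mathcal{O}_Y(\epsilon_{i_1})|$ meets $S$ only at $c$, $\Res_{H_1}(2S,Y)=\{c\}\cup(2a,Y)\cup(2b,Y)$, and the residual exact sequence together with vanishing of $h^1$ for a single double point on $H_1\cong(\mathbb{P}^1)^{k-1}$ gives
$$h^1(\mathcal{I}_{(2S,Y)}(1,\dots,1))\leq h^1(Y_{i_1},\mathcal{I}_{Z_1}(1,\dots,1)),$$
where $Z_1:=\{\eta_{i_1}(c)\}\cup(2\eta_{i_1}(a),Y_{i_1})\cup(2\eta_{i_1}(b),Y_{i_1})$. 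Now I choose $i_2\neq i_1$ as follows: if $|F_{ab}|\geq 2$, take $i_2\in F_{ab}\setminus\{i_1\}$; if $|F_{ab}|=1$, then $|F_{ab}|+|F_{ac}|+|F_{bc}|\leq 3<k$, leaving at least $k-3\geq 2$ tripleton coordinates, any of which may serve as $i_2$. In either case $\pi_{i_2}(c)\neq\pi_{i_2}(a),\pi_{i_2}(b)$, so $H_2:=\pi_{i_2}^{-1}(\pi_{i_2}(c))\subset Y_{i_1}$ meets $Z_1$ only at the simple point $\eta_{i_1}(c)$, and a second residual sequence yields
$$h^1(Y_{i_1},\mathcal{I}_{Z_1}(1,\dots,1))\leq h^1(Y_{i_1,i_2},\mathcal{I}_{Z_2}(1,\dots,1)),$$
where $Z_2:=(2\{\eta_{i_1,i_2}(a),\eta_{i_1,i_2}(b)\},Y_{i_1,i_2})$.

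To finish, the pair $\{\eta_{i_1,i_2}(a),\eta_{i_1,i_2}(b)\}$ in $Y_{i_1,i_2}\cong(\mathbb{P}^1)^{k-2}$ differs in at least $k-|F_{ab}|\geq 3$ coordinates when $i_2\in F_{ab}$, or in $k-2\geq 3$ coordinates when $i_2$ is a tripleton; its minimal multiprojective subspace is therefore some $W=(\mathbb{P}^1)^m$ with $m\geq 3$. By iterating the codimension-one step in the proof of case~\ref{a1unfattore} of Lemma~\ref{a1} (using that two distinct points have linearly independent Segre image, so the required vanishing of $h^1$ of the pair against $\mathcal{O}(\hat{\epsilon}_j)$ holds at each step) one obtains $\delta(Z_2,Y_{i_1,i_2})=\delta(2\{\eta_{i_1,i_2}(a),\eta_{i_1,i_2}(b)\},W)$; and this last quantity equals zero, because for $m\geq 3$ the secant variety $\sigma_2(\nu((\mathbb{P}^1)^m))$ is non-defective (filling $\mathbb{P}^7$ when $m=3$, and achieving the expected dimension when $m\geq 4$). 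Chaining the two residual inequalities then gives $h^1(\mathcal{I}_{(2S,Y)}(1,\dots,1))=0$, the desired contradiction. The main obstacle is the case $|F_{ab}|=1$, where one cannot place both $i_1$ and $i_2$ inside $F_{ab}$ and must fall back on a tripleton coordinate for $i_2$; the crucial lower bound $m\geq 3$ for the terminal $(\mathbb{P}^1)^m$ is exactly what forces the hypothesis $k\geq 5$.
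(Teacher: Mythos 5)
Your reduction chain has a genuine gap at the very first step, and it propagates. After residuating with respect to $H_1=\pi_{i_1}^{-1}(\pi_{i_1}(c))$, the residual scheme is $\Res_{H_1}(2S,Y)=(2a,Y)\cup (2b,Y)\cup\{c\}$, where $(2a,Y)$ and $(2b,Y)$ are the \emph{full} double points of $Y$ (the divisor misses $a$ and $b$, so nothing is removed from them). You then replace $h^1\left(Y,\Ii_{\Res_{H_1}(2S,Y)}(\hat\epsilon_{i_1})\right)$ by $h^1\left(Y_{i_1},\Ii_{Z_1}(1,\dots,1)\right)$, with $Z_1$ the image scheme under $\eta_{i_1}$. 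Only the $h^0$'s of these two ideal sheaves agree (this is the identification used in the paper, e.g.\ in Example \ref{ex1}, and it is stated there only for $h^0$); the $h^1$'s do not, because the degrees differ: $\deg(2a,Y)=k+1$ while $\deg(2\eta_{i_1}(a),Y_{i_1})=k$. Concretely, a section of $\Oo_Y(\hat\epsilon_{i_1})$ is constant along the $i_1$-th factor, so it can never separate the tangent direction of $(2a,Y)$ along that factor; hence $h^1\left(\Ii_{(2a,Y)}(\hat\epsilon_{i_1})\right)=1$ already for a single double point, and in your situation $h^1\left(Y,\Ii_{\Res_{H_1}(2S,Y)}(\hat\epsilon_{i_1})\right)=h^1\left(Y_{i_1},\Ii_{Z_1}(1,\dots,1)\right)+2$. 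The same loss of $2$ occurs at the second residuation. So even granting the correct terminal vanishing $h^1\left(Y_{i_1,i_2},\Ii_{Z_2}(1,\dots,1)\right)=0$, your chain only proves $h^1\left(\Ii_{(2S,Y)}(1,\dots,1)\right)\le 4$, which is no contradiction with $S\in\TT(Y,3)$: the defects actually occurring in Examples \ref{ex1} and \ref{a4.0} are $2$, and on $(\PP^1)^4$ a general triple already has defect $1$.

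The structural reason a single $\Oo_Y(\epsilon_i)$-divisor cannot work is that it can contain at most two of the three points of $S$ (containing all three would contradict minimality of $Y$), so at least one full double point always survives in the residual and contributes positively to $h^1$ of the twisted-down bundle. This is exactly why the paper instead covers $S$ by a reducible divisor $H\cup M\in|\Oo_Y(\epsilon_1+\epsilon_2)|$, so that $\Res_{H\cup M}(2S)=S$ is reduced, and then has to pay for this with the Differential Horace Lemma along $H\cap M$ and, in the remaining configurations with $\#\pi_i(S)=2$ for all $i$, with explicit Macaulay2 computations for $k=5,6$ followed by a specialization-plus-induction argument for $k\ge 7$. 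Your combinatorial setup with the sets $F_{xy}$ and the reduction via Lemma \ref{o2} is fine; it is the analytic core of the argument that needs to be replaced.
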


\begin{proof}
Write $S:=\{u,v,z \} $. Since $ S\in \mathbb{T}(Y,3)$, by Remark \ref{o1} we may assume that $ \pi_i(u)=\pi_i(v)$ for at least an $ i \in \{ 1,\dots,k\}$.
With no loss of generality we may assume $i=1$. Since $h^0\left(\mathcal{O}_Y(\epsilon_i)\right)=2$ for $i=1,2$, both $ |\Ii_u(\epsilon _1)|$ and $|\Ii_z(\epsilon _2)|$ are singletons. Set $\{H\}:= |\Ii_u(\epsilon _1)|$
and $\{M\}:= |\Ii_z(\epsilon _2)|$. Since $v\in H$, then $S\subset H\cup M$. Moreover, since $Y$ is the minimal multiprojective space containing
$S$, then $z\notin H$ and $\#(S\cap M)\le 2$.
\begin{quote}
\begin{claim}\label{claimo3}
$h^1\left(\Ii_S(0,0,1,\dots ,1)\right) =0$ unless $S$ is  either as in Example \ref{a4.0} or
as in Example \ref{ex1}.
\end{claim}
\begin{proof}
Call $\eta : Y\to (\PP^1)^{k-2}$ the projection onto the last $k-2$ factor of $Y$ and set $Y':=
(\PP^1)^{k-2}$. \\ Assume $h^1\left(\Ii_S(0,0,1,\dots ,1)\right)>0$. Therefore  either $\eta _{|S}$ is not injective or
$\#\eta (S) =3$ and $h^1\left(Y',\Ii_{\eta(S)}(1,\dots ,1)\right)>0$. In the first case $S$ is either as in Example \ref{a4.0} or as in
Example \ref{ex1} by Lemma \ref{o2}. In the second case by \cite[Lemma 4.4]{BBS} there is $i\in \{3,\dots ,k\}$ such that $\#\pi
_h(S)=1$ for all $h\in \{3,\dots ,k\}\setminus \{i\}$ contradicting the minimality of $Y'$ for $\eta(S)$, which is a
consequence of the minimality of $Y$ for $S$.
\end{proof}
\end{quote}

Assume by contradiction that $S$ is neither as in Example \ref{a4.0} nor
as in Example \ref{ex1}.
\begin{enumerate}[leftmargin=+.2in, label=(\alph*)]
    \item\label{o3casoa} Assume $\#(S\cap M)=1$, i.e. $S\cap (H\cap M)=\emptyset$. So $S$ is contained in the smooth part of $H\cup M$ and $\Res_{H\cup M}(2S) =S$. Since $ S$ is not as in one of the examples, by Claim \ref{claimo3} we get $h^1\left(\mathcal{I}_{\Res_{H\cup M }(2S,Y)}(0,0,1,\dots,1)\right)=h^1\left(\mathcal{I}_S(0,0,1,\dots,1)\right)=0 $. Moreover, by the restriction exact sequence of $ S$, we get $ h^0\left(\mathcal{I}_S(0,0,1,\dots,1)\right)=2^{k-2}-3$. Since by assumption $S\in \mathbb{T}(Y,3) $, then $h^0\left(\mathcal{I}_{(2S,Y)}(1,\dots,1)\right)>0 $ and more precisely $ h^0\left(\mathcal{I}_{(2S,Y)}(1,\dots,1)\right)\geq 2^k-3(k+1)$, where $k\geq 5 $. Thus the residual exact sequence of $H\cup M $ gives
$h^1\left(H\cup M, \Ii _{(2S,H\cup M),H\cup M}(1,\dots ,1)\right)>0$. 
Since  by assumption $S\cap (H\cap M)=\emptyset$, the scheme theoretic intersection 
$(2S,H\cup M) = (2u,H)\cup (2v,H)\cup (2z,M)$. 

Denote by $G$ the set of all $g\in (\mathrm{Aut}(\PP^1))^k$ acting as the identity on the
last $k-1$ factors of $Y$; we remark that the elements of $ G$ are $3$-transitive on the first factor. Let $G_u$ the subgroup of $G$ fixing also the
first component $\pi _1(u)$ of $u\in S$. Hence, since we assumed $ \pi_1(u)=\pi_1(v)$, any $g\in G_u$ fixes both $u$ and $v$. Obviously $h^1\left(H\cup M,\Ii_{(2u,H)\cup (2v,H)\cup
(2z,M),H\cup M}(1,\dots ,1)\right) = h^1\left(H\cup M,\Ii_{(2u,H)\cup (2v,H)\cup
(2g(z),M),H\cup M}(1,\dots ,1)\right)$ for all $g\in G_u$.
Thus it is sufficient to find a contradiction for a single $z'\in
M\setminus H\cap M$ with $\pi _i(z')=\pi _i(z)$ for $i>1$.

We may specialize $z $ by considering a general $o \in H\cap M $. So it is sufficient to work on $ H$ rather than $ H\cup M$. Denote by $Z:=(2u,H)\cup (2v,H)$ and call $ A$ the union of $ Z$ and the double point $(2o,H\cap M) $. We want to use the Differential Horace Lemma with $ H\cap M$ as a divisor of  $H$ (cf. Lemma \ref{diff1}). We remark that $Z\subset H $ satisfies  the assumptions of the Differential Horace Lemma, i.e. both 
$$h^1\left(H,\Ii_{\Res_{H\cap M}(Z)}\otimes
\Ll (-H\cap M)\right) =0$$
$$\hbox{and }h^1\left(H\cap M,\Ii _{Z\cap (H\cap M),H\cap M}\otimes \Ll_{|H\cap M}\right)=0.$$ Indeed the latter is trivially zero since by assumption $\#S\cap M=1 $. The former is zero since $ H$ is the minimal multiprojective space containing $Z $ and by Proposition \ref{d1} we know that $\mathbb{T}(H,2)=\emptyset $ and in particular that $\delta(2\{u,v\},H)=0 $. Thus by Lemma \ref{diff1}, in order to show that $h^1\left(H,\mathcal{I}_{A}(1,\dots,1)\right)=0 $, it suffices to show that both
$$h^1\left(H\cap M, \mathcal{I}_{(Z\cap (H\cap M))\cup \{o \} }(1,\dots,1)\right)=0$$ 
$$\hbox{and }h^1\left(H,\mathcal{I}_{\Res_{H\cap M}(A)}(1,0,1,\dots,1)\right)=0.$$
Clearly since $ (Z\cap (H\cap M))\cup \{o \}=\{ o\} $ then $h^1\left(H\cap M, \mathcal{I}_{(Z\cap (H\cap M))\cup \{o \} }(1,\dots,1)\right) $ is trivially zero. The second equality follows from \eqref{eqm1} of Remark \ref{m1} since we already pointed out that $\delta(2\{u,v\},H)=0$.

\item\label{o3casob} Assume $\#(S\cap M)=2$. Taking $\{M_i\}  =|\Ii_z(\epsilon _i)|$ for $i=3,\dots ,k$ and applying step \ref{o3casoa} to $H\cup
M_i$, we see that it is sufficient to handle the case with $\#\pi _i(S)=2$ for all $i$.

Write $S =\{a,b,c\}$. Set $a_i:= \pi _i(a)$, $b_i:= \pi _i(b)$ and $c_i:= \pi _i(c)$. Since $\mathrm{Aut}(\PP^1)$ is 2-transitive, by composing with an element of $\mathrm{Aut}(\PP^1)^k$, we may assume $\pi _i(S) =\{\alpha,\beta\}$ for all $i$. 

Without loss of generality we may also assume $a_i =\alpha$ for all $i$.
Thus $\beta \in \{b_i,c_i\}$ for all $i$. Moreover, since $S$
is neither as in Example \ref{a4.0} nor as in Example \ref{ex1},  for all $A\subset S$ with $\#A=2$ then $\#\pi _i(A)=2$ for at least $3$ indices $i$'s.

We define the maximum number of common components that any two points of $ S$ can have as 
$$t:=\max\{ \#I\subset \{1,\dots,k \} \, \vert \, \exists A \subset S \mbox{ with }\# A=2 \mbox{ such that } \;\forall i \in I \; \pi_i(A)=1 \}. $$
By relabeling if necessary, we may assume that $\{ a,b\} $ is one of the subsets of $ S$ reaching such $ t$.  
By assumption $t \le k-3$.

We distinguish different cases depending on the integer $k\geq 5 $. In particular, for $k=5,6 $, we will get to a contradiction with the assumption $ \delta(2S,Y)>0$ by direct computation with Macaulay2 (cf. \cite{M2}).
\begin{enumerate}[leftmargin=+.2in, label=(\roman*)]
    \item\label{o3casobitem1}  Assume $k=5$. So $t\leq 2 $ and since $\#\pi_i(S)=2$ for all $i$ and $k>3 $ then $t =2$. 
Permuting the factors of $Y$ we may assume $b_i =\alpha$ for $i=1,2$ and $b_i =\beta$ for $i=3,4,5$.  Since $\#\pi _i(S)=2$ for all $i$, then $c_1
=c_2 =\beta$. Since $a$ and $c$ have at most $2$ common components, then $c_i=\alpha$ for $i=3,4$ and $c_5=\beta$.
Thus $S=\{a,b,c\}$ is such that \begin{equation*}
    a=(\alpha,\alpha,\alpha,\alpha,\alpha), \;b=(\alpha,\alpha,\beta,\beta,\beta),\; c=(\beta,\beta,\alpha,\alpha,\beta)
\end{equation*} and up to a permutation of the elements of $ S$ and of the factors of $Y$, there is a unique such $ S$.\\ By direct computation one can see that $h^0\left(\mathcal{I}_{(2S,Y)}(1,1,1,1,1)\right)=14 $ and consequentially $h^1\left(\mathcal{I}_{(2S,Y)}(1,1,1,1,1)\right)=0$ contradicting the assumption. 

\item\label{o3casobitem2}Assume $k=6$. We have $t \le 3$.  Moreover, since $\#\pi_i(S)=2$ for all $i$, then $t \ge 2$. We distinguish two different cases in dependence on the value $t \in \{ 2,3 \} $.

Assume $t =3$.  Permuting if necessary the factors of $Y$, we may assume $b_1=b_2=b_3=\alpha$ and $b_4=b_5=b_6 =\beta$.
Thus since $ \#\pi_i(S)=2$ for all $i $'s, then $c_1=c_2=c_3=\beta$. 
Moreover $c $ and $ a$ can have $ 2$ or $3$ common components.
In the first case $S=\{a,b,c\}$ is such that \begin{equation*}
   a=(\alpha,\alpha,\alpha,\alpha,\alpha,\alpha),\;b=(\alpha,\alpha,\alpha,\beta,\beta,\beta),\; c=(\beta,\beta,\beta,\beta,\alpha,\alpha).
\end{equation*} 
In the second case $S=\{a,b,c\}$ is such that \begin{equation*}
    a=(\alpha,\alpha,\alpha,\alpha,\alpha,\alpha), \;b=(\alpha,\alpha,\alpha,\beta,\beta,\beta),\; c=(\beta,\beta,\beta,\alpha,\alpha,\alpha).
\end{equation*} We remark that up to permuting the factors of  $Y$ and relabeling the elements of $ S$, these are the only cases for $ t=3$. As before, by direct computation, one gets for both cases $\delta(2S,Y)=0 $ contradicting the assumption. 

Assume $t =2$. Permuting the factors of $Y$ we may assume $b_1=b_2=\alpha$ (and hence $c_1=c_2=\beta$) and
$b_3=b_4=b_5=b_6 =\beta$. Since $\#\{a_i,c_i\}=\#\{b_i,c_i \}=1$ for at most $2$ indices, among the set $\{3,4,5,6\}$ exactly $2$ $i$'s have $c_i=\beta$, while the other ones have $c_i=\alpha$. 
Thus $S=\{a,b,c\}$ is such that \begin{equation*}
     a=(\alpha,\alpha,\alpha,\alpha,\alpha,\alpha),\;b=(\alpha,\alpha,\beta,\beta,\beta,\beta),\; c=(\beta,\beta,\beta,\beta,\alpha,\alpha).
\end{equation*} 
Up to relabeling the points of $S$ and a permutation of the
factors of $Y$, there is a unique such $S$. By direct computation one gets $h^0\left(\mathcal{I}_{(2S,Y)}(1,1,1,1,1,1)\right)=20$, so $\delta(2S,Y)=0 $ contradicting the assumption.

\item Now assume $k\ge 7$.
Exchanging if necessary the names of the points of $S$ we may assume
$\pi_1(a)=\pi_1(b)=a_1$ and hence $\pi_1(c)=c_1\ne a_1$. For any $t\in
\PP^1$ set $S_t:= \{a,b,c_t\}$, where $\pi _1(c_t):=t$ and $\pi _i(c_t): =c_i$ for all $i>1$. Since any two of
points of $S$ differ in at least $3$ coordinates, $\#S_t =3$ for all $t$. Since $\mathrm{Aut}(\PP^1)$ is $3$-transitive, for
each $t\in \PP^1\setminus \{a_1\}$ there is $g_t\in (\mathrm{Aut}(\PP^1)^k)\subset \mathrm{Aut}(Y)$ such that $g_t(S_t)=S$.
Thus $\delta (2S) = \delta (2S_t)$ for all $t\in \PP^1\setminus \{a_1\}$. 
By the semicontinuity theorem for cohomology it is
sufficient to prove $\delta (2S_{a_1},Y)=0$.

To show that $\delta (2S_{a_1},Y)=0$, we proceed by induction on the integer $n:=k-7 $.

Assume $ n=0$, i.e. $ k=7$. Since $h^0(\Oo_Y(\epsilon_1))=2$, $|\Ii_a(\epsilon _1)|$ is a singleton. Set
$\{H\}:= |\Ii_a(\epsilon _1)|$, so $H\supset S_{a_1}$ by definition. Since any two points of $S$ differs in at
least $3$ coordinates, by \cite[Lemma 4.4]{BBS} we know that $h^1\left(H,\Ii_{S_{a_1}}(\hat{\epsilon}_1)\right) =0$. By case \ref{a1disuguaglianze} of Lemma \ref{a1} we know that
$\delta (2S_{a_1},Y) =\delta (2S_{a_1},H)$. In item \ref{o3casobitem2} we proved that for any subset $ S\subset (\mathbb{P}^1)^6$ of three points such that any two of them have at least $ 3$ distinct components, then $\delta(2S,(\mathbb{P}^1)^6)=0 $. Thus $ \delta (2S_{a_1},H)=0$, and hence $ \delta (2S_{a_1},Y) =0$.

Assume now $n>0 $, i.e. $ k>7$. As before, we set
$\{H\}:= |\Ii_a(\epsilon _1)|$, so $H\supset S_{a_1}$ by definition. By the same argument we get 
$\delta (2S_{a_1},Y) =\delta (2S_{a_1},H)$. If $c_{a_1}$ differs from $a$ and from $b$ in at least $3$ coordinates, then the
inductive assumption gives $\delta( 2S_{a_1},H)=0$ and hence $\delta (2S,Y)=0$. We conclude since $k>7 $ and $\#\pi_i(S)=2 $ for all $i $, so not all pairs of points of $ S$ may differ in only $ 3$ coordinates.

Thus we proved that for all $k\geq 7 $, then $\delta(S_{a_1},Y)=0 $, so by the semicontinuity theorem for cohomology, for all $k \geq 7$ we get $\delta (S,Y)=0 $ contradicting the assumption.
\end{enumerate}
\end{enumerate}\vspace{-0.5cm}
\end{proof}

\begin{lemma}\label{a4}
Let $Y =\PP^2\times \PP^2\times \PP^1$. Then each $S\in \TT(Y,3)$ is as in Example \ref{a4.0} for $k=3$ and
$n_1=n_2=2$.
\end{lemma}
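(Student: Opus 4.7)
The plan is to split the argument according to $\#\pi_3(S)$: if it equals $2$ the claim falls out of minimality, and if it equals $3$ I reduce to a single explicit configuration via $\Aut(Y)$ and compute $h^0(\Ii_{(2S,Y)}(1,1,1))$ directly. Writing $S=\{a,b,c\}$, minimality of $Y$ forces each of $\pi_1(S)$ and $\pi_2(S)$ to be a non-collinear triple in $\PP^2$, while minimality in the third factor gives $\#\pi_3(S)\in\{2,3\}$. In the case $\#\pi_3(S)=2$, I relabel $S=\{o,u,v\}$ with $\pi_3(u)=\pi_3(v)\ne\pi_3(o)$; minimality in the first two factors then forces $\pi_j(u)\ne\pi_j(v)$ (otherwise $\pi_j(S)$ has at most two points) and $\pi_j(o)\notin\langle\pi_j(u),\pi_j(v)\rangle$ (otherwise $\pi_j(S)$ is collinear) for $j=1,2$; these are exactly the defining conditions of Example~\ref{a4.0} with $k=3$ and $n_1=n_2=2$.

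It then remains to rule out $\#\pi_3(S)=3$. The key observation is that $\Aut(Y)=\mathrm{PGL}_3\times\mathrm{PGL}_3\times\mathrm{PGL}_2$ acts transitively on ordered triples $(a,b,c)\in Y^3$ whose first two projections are non-collinear triples in $\PP^2$ and whose third projections are three distinct points in $\PP^1$: indeed $\mathrm{PGL}_3$ is transitive on ordered non-collinear triples in $\PP^2$, and $\mathrm{PGL}_2$ is $3$-transitive on $\PP^1$. So I may reduce to the specific representative $a=([1{:}0{:}0],[1{:}0{:}0],[1{:}0])$, $b=([0{:}1{:}0],[0{:}1{:}0],[0{:}1])$, $c=([0{:}0{:}1],[0{:}0{:}1],[1{:}1])$ and compute $h^0(\Ii_{(2S,Y)}(1,1,1))$ by hand: a section of $\Oo_Y(1,1,1)$ is a $3\times 3\times 2$ tensor $(T_{ijk})$; vanishing to order $2$ at $a$ imposes $T_{111}=T_{211}=T_{311}=T_{121}=T_{131}=T_{112}=0$, at $b$ imposes $T_{222}=T_{122}=T_{322}=T_{212}=T_{232}=T_{221}=0$, and at $c$ (in the chart $z=z_1/z_2$ on $\PP^1$) imposes $T_{331}=T_{332}=0$ together with the four relations $T_{131}+T_{132}=T_{231}+T_{232}=T_{311}+T_{312}=T_{321}+T_{322}=0$. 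Combined with the vanishings from $a$ and $b$, these force $T_{132}=T_{231}=T_{312}=T_{321}=0$ as well, so all $18$ coordinates of $T$ vanish; hence $h^0(\Ii_{(2S,Y)}(1,1,1))=0$, contradicting $S\in\TT(Y,3)$.

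The hard part is the bookkeeping for the vanishing at $c$: since its $\PP^1$-projection $[1{:}1]$ is not a coordinate vertex, the conditions it contributes mix the two $k$-slices of $T$, and one must check that these new relations genuinely fill out the remaining six coordinates rather than duplicating those already imposed by $a$ and $b$. Equivalently, one could phrase the final step as a Terracini check that the projective tangent spaces $T_{\nu(a)}X$, $T_{\nu(b)}X$, $T_{\nu(c)}X$ together span $\PP^{17}$, which amounts to the same index-matching exercise.
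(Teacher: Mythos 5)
Your proof is correct and follows essentially the same route as the paper: both arguments observe that minimality of $Y$ leaves exactly two orbits under $\mathrm{Aut}(\PP^2)\times\mathrm{Aut}(\PP^2)\times\mathrm{Aut}(\PP^1)$, distinguished by $\#\pi_3(S)\in\{2,3\}$, identify the orbit with $\#\pi_3(S)=2$ as precisely the configurations of Example \ref{a4.0} with $k=3$, $n_1=n_2=2$, and kill the orbit with $\#\pi_3(S)=3$. The only difference is in that last step: the paper quotes the non-defectivity $\sigma_3(\nu(Y))=\PP^{17}$ from \cite{CGG05} to get $\delta(2S,Y)=0$, whereas you verify $h^0\left(\Ii_{(2S,Y)}(1,1,1)\right)=0$ by an explicit (and correct) $18$-coordinate computation on a normalized representative --- equivalent here since $h^0\left(\Oo_Y(1,1,1)\right)=18=\deg(2S,Y)$ --- which makes the lemma self-contained at the cost of some bookkeeping.
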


\begin{proof}
 Set $\Uu:=\{ S\in S(Y,3) \,\vert\, Y \mbox{ is the minimal multiprojective space containing
S} \}$. So any $S\in \Uu $ is such that
$\#\pi _3(S)\ge 2$, $\pi _{1|S}$ and $\pi _{2|S}$ are injective and $\dim \langle \pi _1(S)\rangle =\dim \langle \pi
_2(S)\rangle =2$. The group $\mathrm{Aut}(\PP^2)\times \mathrm{Aut}(\PP^2)\times \mathrm{Aut}(\PP^1)$ acts on $\Uu$ with
exactly $2$ orbits:
\begin{enumerate}
\item $\#\pi _3(S) =3$;
\item $\#\pi _3(S) =2$.
\end{enumerate}
Call $O_1$ the first orbit and $O_2$ the second one. Obviously $h^1(\Ii _{(2S,Y)}(1,1,1)) = h^1(\Ii _{(2S',Y)}(1,1,1))$ for all
$S, S'$ in  the same orbit. Among the elements of $O_1$ there are the general subset of $Y$ with cardinality $3$. Since
$\sigma _3(\nu(Y)) =\PP^{17}$ 
(\cite{CGG05}) $h^1\left(\Ii _{(2S,Y)}(1,1,1)\right) =0$ for all $S\in O_1$. Note that the elements of
$O_2$ are exactly the sets $S$ described in Example \ref{a4.0} for $n_1=n_2=2$ and $k=3$.
\end{proof}

\begin{lemma}\label{lu3}
Let $Y =\PP^2\times \PP^2\times \PP^2$. The $3$-rd Terracini locus  $ \TT(Y,3)$ is empty. 
\end{lemma}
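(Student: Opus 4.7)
The plan is to show directly that for every $S = \{p_1,p_2,p_3\}\subset Y$ having $Y=(\mathbb{P}^2)^3$ as minimal multiprojective space, the three affine tangent cones $\hat T_{\nu(p_j)}X$ to the Segre variety $X=\nu(Y)$ are in direct sum inside $V_1\otimes V_2\otimes V_3\cong \mathbb{K}^{27}$. Equivalently, $h^1(\mathcal{I}_{(2S,Y)}(1,1,1))=0$, so $S\notin \TT(Y,3)$ and hence $\TT(Y,3)=\emptyset$.

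First I observe that minimality of $Y$ for $S$ forces $\dim\langle\pi_i(S)\rangle=2$ for every $i\in\{1,2,3\}$, so $\pi_i(S)$ is a projective basis of $\mathbb{P}(V_i)$ and in particular each $\pi_{i|S}$ is injective. Applying the action of $(\mathrm{Aut}(\mathbb{P}^2))^3$, I may take $\pi_i(S)=\{[e_1],[e_2],[e_3]\}$ for the standard basis $e_1,e_2,e_3$ of $V_i$; after an additional relabeling of the three points I may write $p_j=([e_{\sigma_1(j)}],[e_{\sigma_2(j)}],[e_{\sigma_3(j)}])$ for three permutations $\sigma_1,\sigma_2,\sigma_3$ of $\{1,2,3\}$, and may further reduce to $\sigma_1=\mathrm{id}$.

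In these coordinates
\[
\hat T_{\nu(p_j)}X=V_1\otimes e_{\sigma_2(j)}\otimes e_{\sigma_3(j)}\,+\,e_{\sigma_1(j)}\otimes V_2\otimes e_{\sigma_3(j)}\,+\,e_{\sigma_1(j)}\otimes e_{\sigma_2(j)}\otimes V_3,
\]
and so $\hat T_{\nu(p_j)}X$ is spanned precisely by those basis tensors $e_a\otimes e_b\otimes e_c$ for which at least two of the three equalities $a=\sigma_1(j)$, $b=\sigma_2(j)$, $c=\sigma_3(j)$ hold. If such a tensor also belonged to $\hat T_{\nu(p_k)}X$ for some $k\neq j$, then among the at least four coordinate-equalities forced (two for each of $j$ and $k$) some position $i\in\{1,2,3\}$ would, by pigeonhole, satisfy $\sigma_i(j)=\sigma_i(k)$, contradicting the bijectivity of the permutation $\sigma_i$. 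Consequently the three $7$-dimensional subspaces are in direct sum and their total span has dimension exactly $21$.

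Finally, since $h^0(\mathcal{O}_Y(1,1,1))=27$ and $\deg(2S,Y)=21$, the restriction exact sequence gives
\[
h^0(\mathcal{I}_{(2S,Y)}(1,1,1))-h^1(\mathcal{I}_{(2S,Y)}(1,1,1))=6,
\]
so the direct-sum conclusion of the previous paragraph forces $h^1(\mathcal{I}_{(2S,Y)}(1,1,1))=0$ (with $h^0=6$), hence $S\notin\TT(Y,3)$. The only delicate point is the pairwise disjointness of tangent cones, which however reduces to the one-line pigeonhole observation above; this is the cleanest way to exploit the hypothesis that all three $\sigma_i$ are bijections. An attack via the residual exact sequence and Lemma \ref{diff1}, in the style of the previous lemmas of the paper, is also possible but heavier; here the direct monomial computation suffices because each $\hat T_{\nu(p_j)}X$ is already a coordinate subspace of $V_1\otimes V_2\otimes V_3$.
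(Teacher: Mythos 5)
Your proof is correct, but it takes a genuinely different route from the paper's. The paper normalizes $S$ by the transitive action of $\mathrm{Aut}(\PP^2)^3$ on triples whose three projections are each linearly independent, so that $S$ may be treated as a general triple, and then quotes the non-defectivity of $\sigma_3(\nu(\PP^2\times\PP^2\times\PP^2))$ from \cite[Example 4.1]{CGG2}: this gives $\dim\sigma_3(X)=20$, hence $h^0(\Ii_{(2S,Y)}(1,1,1))=6$ and $\delta(2S,Y)=0$ by the restriction sequence. You instead verify the independence of the tangent spaces by hand: after normalizing each $\pi_i(S)$ to the coordinate points, each affine tangent space $\hat T_{\nu(p_j)}X$ is the coordinate subspace of $V_1\otimes V_2\otimes V_3$ spanned by the basis tensors agreeing with $p_j$ in at least two positions, and your pigeonhole argument on the three permutations $\sigma_1,\sigma_2,\sigma_3$ shows these three $7$-element monomial sets are pairwise disjoint, so the sum is direct of dimension $21$ and $h^1(\Ii_{(2S,Y)}(1,1,1))=0$. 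Your computation is self-contained (no citation of secant-variety dimensions is needed), applies verbatim to every $S$ in the orbit rather than only to a general representative, and as a byproduct reproves the non-defectivity statement the paper cites; the paper's argument is shorter precisely because it outsources this computation to the literature. Both arguments correctly conclude $S\notin\TT(Y,3)$ from $\delta(2S,Y)=0$ alone, since membership in $\TT(Y,3)$ requires $\delta(2S,Y)>0$.
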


\begin{proof}
Let $ S\in S(Y,3)$ be such that $ Y$ is the minimal multiprojective space that contains $ S$, i.e. $\pi_{i\vert S} $ is injective and $\dim \langle \pi_i(S) \rangle =2$ for all $i$. By the action of $\Aut(\mathbb{P}^2)^3 $, we can reduce to work with a general set $ S\in S(Y,3)$. Since $\sigma_3(X) $ is not defective (cf. \cite[Example 4.1]{CGG2}) we know that $ \dim(\sigma_3(X))=20$, so $ h^0(\mathcal{I}_{(2S,Y)}(1,1,1))=6$. Hence, by the restriction exact sequence, $\delta(2S,Y)=0$.
\end{proof}

\begin{lemma}\label{b1}
Let $ Y=\mathbb{P}^2\times \PP^1\times \PP^1$. If $ S \in \TT(Y,3)$ then $ S$ is either as in Example \ref{ex1} or as in Example \ref{a4.0}.
\end{lemma}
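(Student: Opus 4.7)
The plan is to analyze $S=\{a,b,c\}$ by cases on the pair $(\#\pi_2(S),\#\pi_3(S))$, each entry being $2$ or $3$ by minimality of $Y$ for $S$; minimality also forces $\dim\langle\pi_1(S)\rangle=2$ with $\pi_1(S)$ non-collinear. Since the two $\PP^1$-factors of $Y$ are symmetric, by relabeling them I may assume $\#\pi_3(S)\le \#\pi_2(S)$.

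The heart of the argument is the case $\#\pi_2(S)=\#\pi_3(S)=3$, where I would reach a contradiction by showing $h^0\bigl(\Ii_{(2S,Y)}(1,1,1)\bigr)=0$. Set $L:=\langle \pi_1(a),\pi_1(b)\rangle$ and $H:=L\times\PP^1\times\PP^1\cong(\PP^1)^3$; since $\pi_1(c)\notin L$, one has $H\cap S=\{a,b\}$ and $\Res_H(2S,Y)=\{a,b\}\cup (2c,Y)$. In the resulting residual exact sequence, both boundary terms have vanishing $h^0$: for the term on $H$, the set $\{a,b\}$ has all three projections distinct, so $H$ is its minimal multiprojective space, and Proposition \ref{d1} combined with the identity $h^1-h^0=\deg(2\{a,b\},H)-h^0(\Oo_H(1,1,1))=8-8=0$ forces $h^0=h^1=0$; for the residual term, the pullback via $\eta_1$ reduces it to $h^0\bigl(Y_1,\Ii_{\{\eta_1(a),\eta_1(b)\}\cup (2\eta_1(c),Y_1)}(1,1)\bigr)$ on $Y_1=\PP^1\times\PP^1$, and the unique $(1,1)$-form through $(2\eta_1(c),Y_1)$, namely the product of the two rulings at $\eta_1(c)$, does not vanish at $\eta_1(a)$ because in this case $\pi_2(a)\ne\pi_2(c)$ and $\pi_3(a)\ne\pi_3(c)$. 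The long exact cohomology sequence then gives $h^0\bigl(\Ii_{(2S,Y)}(1,1,1)\bigr)=0$, contradicting $S\in\TT(Y,3)$.

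In the remaining cases I match $S$ directly to the form of one of the two examples. If $\#\pi_3(S)=2$ and $\#\pi_2(S)=3$, relabel so $\pi_3(a)=\pi_3(b)\ne \pi_3(c)$; then $(u,v,o):=(a,b,c)$ realizes $S$ as an instance of Example \ref{a4.0} with $k=3$, $n_1=2$, $n_2=1$. If $\#\pi_2(S)=\#\pi_3(S)=2$, relabel so $\pi_2(a)=\pi_2(b)\ne \pi_2(c)$; then either $\pi_3(a)=\pi_3(b)\ne \pi_3(c)$, in which case $a$ and $b$ coincide in the last two coordinates and $S$ has the shape of Example \ref{ex1} with $m=2$, $k=3$, or else the $\pi_3$-coincidence involves $c$, and up to swapping $a\leftrightarrow b$ the choice $(u,v,o):=(a,c,b)$ satisfies all defining conditions of Example \ref{a4.0} with $n_1=2$, $n_2=1$, $k=3$ (the equalities $\pi_3(u)=\pi_3(v)\ne\pi_3(o)$, the distinctness of $\pi_1(u),\pi_1(v),\pi_2(u),\pi_2(v)$ and $\pi_1(o)\notin L_1$ all follow from the case hypotheses and the non-collinearity of $\pi_1(S)$); moreover in this last configuration $\pi_2(o)=\pi_2(a)=\pi_2(u)$ lies in $\pi_2(\{u,v\})$, consistently with the $h^0>0$ regime singled out by part \ref{a4.0itemv}. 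The main technical step throughout is the first case, where the combination of Proposition \ref{d1} applied on the $(\PP^1)^3$-divisor $H$ with a short tangent-cone computation on $\PP^1\times\PP^1$ is what allows one to discard the ``all projections of maximum size'' configuration; the other cases reduce by simple relabeling to the two example classes.
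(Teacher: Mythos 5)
Your proof is correct, and its skeleton coincides with the paper's: both arguments reduce to excluding the configuration $\#\pi_2(S)=\#\pi_3(S)=3$ from $\TT(Y,3)$, and both rest on the observation that every remaining configuration matches Example \ref{ex1} or Example \ref{a4.0} --- a matching the paper asserts in a single sentence and you verify case by case (your verification is accurate, including the swap $(u,v,o)=(a,c,b)$ in the last sub-case). Where you genuinely diverge is in how the key vanishing $h^0\left(\Ii_{(2S,Y)}(1,1,1)\right)=0$ is obtained when all projections to the $\PP^1$-factors are injective. The paper uses the transitivity of $\mathrm{Aut}(\PP^2)\times \mathrm{Aut}(\PP^1)^2$ on such triples to replace $S$ by a general one and then cites the non-defectivity of $\sigma_3(\nu(\PP^2\times\PP^1\times\PP^1))$ from \cite[Theorem 4.5]{aop}. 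You instead run the residual exact sequence with respect to $H=\langle \pi_1(a),\pi_1(b)\rangle\times\PP^1\times\PP^1\in |\Oo_Y(\epsilon_1)|$: the trace term vanishes by Proposition \ref{d1} combined with $\deg(2\{a,b\},H)=h^0\left(\Oo_H(1,1,1)\right)=8$, and the residual term vanishes because the unique $(1,1)$-curve singular at $\eta_1(c)$ is the pair of rulings through $\eta_1(c)$, which misses $\eta_1(a)$ precisely because all second and all third coordinates of $S$ are distinct. This is self-contained and in the same style as the computations of Section \ref{section:examples}, at the price of a slightly longer argument; the paper's route is shorter but imports an external theorem. Both are valid.
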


\begin{proof}
Let $S\in S(Y,3) $ such that $ Y$ is the minimal multiprojective space containing $ S$, i.e. $ \pi_{1 \vert S}$ is injective, $ \dim \langle \pi_1(S) \rangle=\mathbb{P}^2$ and $\# \pi_i(S)\geq 2$ for all $i\in \{2,3\} $. We remark that $S$ is as in Example  \ref{a4.0} or as in Example \ref{ex1} if and only if there exists an index $ i\in \{2,3\}$ such that $\#\pi_i(S)=2 $. 

Assume by contradiction that $ S$ is neither as in Example \ref{a4.0} nor Example \ref{ex1}, i.e. assume that $ \#\pi_i(S)=3$ for $ i=2,3$. Since $\mathrm{Aut}(\PP^2)$ is transitive on the set of triples of linearly independent points of $\PP^2$ and $\mathrm{Aut}(\PP^1)$ is 3-transitive, $S$ is in the open orbit for the action of $\mathrm{Aut}(\PP^2)\times \mathrm{Aut}(\PP^1)\times \mathrm{Aut}(\PP^1)$ on $S(Y,3)$. So we can deal with a general set $S\in S(Y,3) $. 
By \cite[Theorem 4.5]{aop} we know that $\sigma _3(X)$ is non-defective, so $\sigma_3(X)=\PP^{11} $ and hence $ h^0\left(\mathcal{I}_{(2S,Y)}(1,1,1)\right)=0$, contradicting the assumption.
\end{proof}

\begin{lemma}\label{ju1}
Let $Y:=\mathbb{P}^{n_1}\times\cdots \times \mathbb{P}^{n_k} $, where $k\geq 5 $ and $n_i\in \{1,2 \} $ for all $i$'s. If $S\in \TT(Y,3) $ then $ S$ is either as in Example \ref{ex1} or as in Example \ref{a4.0}. 
In particular $\TT(Y,3)=\emptyset$, unless $n_i=1$ for at least $k-2$ indices $i$.
\end{lemma}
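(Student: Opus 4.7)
The plan is to argue by induction on $k$, with base case $k = 5$, reducing ultimately to the all-$\mathbb{P}^1$ setting of Lemma \ref{o3}. The key observation to exploit is that both Example \ref{ex1} and Example \ref{a4.0} are characterized by having some pair of the three points of $S$ agree in at least $k - 2$ coordinates of $Y$; so, toward a contradiction, I would assume $S = \{u, v, z\} \in \TT(Y, 3)$ and that every pair of distinct points of $S$ differs in at least $3$ coordinates of $Y$.

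For the inductive step $k \geq 6$, fix any index $i \in \{1, \dots, k\}$. Since each pair of points of $S$ differs in at least $2$ coordinates outside of $i$, the map $\eta_{i|S} \colon S \to Y_i$ is injective, and because $Y$ was already minimal for $S$ one checks that $Y_i$ is the minimal multiprojective space containing $\eta_i(S)$. Applying the contrapositive of Lemma \ref{1lu2} to the hypothesis $\delta(2S, Y) > 0$ yields $\delta(2\eta_i(S), Y_i) > 0$. Invoking the inductive hypothesis on $Y_i$ (which has $k - 1 \geq 5$ factors and still satisfies $n_j \in \{1,2\}$), either $h^0(Y_i, \Ii_{2\eta_i(S)}(1, \dots, 1)) = 0$, or $\eta_i(S)$ must be as in Example \ref{ex1} or \ref{a4.0} for $Y_i$. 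The Example \ref{ex1} subcase for $Y_i$ would force the corresponding pair of $S$ to agree in at least $k - 2$ coordinates of $Y$, a direct contradiction. The Example \ref{a4.0} subcase for $Y_i$ produces a pair of $\eta_i(S)$ agreeing in $k - 3$ coordinates of $Y_i$; combined with the contradiction assumption, the three disagreement coordinates of the corresponding pair of $S$ must include $i$, and cycling over $i$ together with pigeonholing over the three pairs of $S$ forces a contradiction once $k$ is sufficiently large. Finally, the residual $h^0 = 0$ alternative will be ruled out by comparing with $h^0(Y, \Ii_{2S}(1, \dots, 1)) > 0$ via the K\"{u}nneth-type Proposition \ref{1lu1}, in the spirit of the proof of Lemma \ref{1lu2}.

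The base case $k = 5$ will be handled separately per value of $|I| = |\{i : n_i = 2\}|$. For $|I| = 0$, Lemma \ref{o3} applies directly. For $|I| \geq 1$, I would fix $i_0 \in I$, pick a line $L \subset \mathbb{P}^{n_{i_0}} = \mathbb{P}^2$ through exactly two of the three linearly independent projected points of $\pi_{i_0}(S)$, and study the residual exact sequence of $(2S, Y)$ with respect to the divisor $H := \pi_{i_0}^{-1}(L) \in |\mathcal{O}_Y(\epsilon_{i_0})|$, which is a multiprojective space with one fewer $\mathbb{P}^2$ factor. Iterating this reduction brings us to the all-$\mathbb{P}^1$ case, where Lemma \ref{o3} finishes the argument. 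The hardest part will be the delicate combinatorial bookkeeping of which pair of $S$ realizes the Example \ref{a4.0} configuration under the various projections $\eta_i$, together with the interplay between the scheme $\Res_H(2S) = \{u, v\} \cup (2z, Y)$ and the smaller ambient $H$. The ``In particular'' statement follows immediately from the main assertion: since Example \ref{ex1} involves at most one $\mathbb{P}^2$ factor and Example \ref{a4.0} at most two, no $S$ can belong to $\TT(Y, 3)$ when $|I| \geq 3$.
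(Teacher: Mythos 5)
Your overall architecture---induction on the number of factors $k$, descending via the forgetful projections $\eta_i$ and the contrapositive of Proposition \ref{1lu2}---is genuinely different from the paper, which instead inducts on $t=\dim Y-k$, i.e.\ on the number of $\PP^2$ factors, keeping $k$ fixed and converting one $\PP^2$ at a time into a $\PP^1$. Unfortunately your base case $k=5$ with some $n_{i_0}=2$ has a real gap. Since $h^0(\Oo_Y(1,\dots,1))\ge 48>\deg(2S,Y)$ for such $Y$, membership in $\TT(Y,3)$ is decided by $\delta(2S,Y)$ alone, so outside the two Examples you must prove $\delta(2S,Y)=0$. But your divisor $H=\pi_{i_0}^{-1}(L)$ contains only two of the three points, so $\Res_H(2S,Y)=\{u,v\}\cup(2z,Y)$, and the kernel of the residual sequence is twisted by $\hat{\epsilon}_{i_0}$, a line bundle of degree $0$ on the $i_0$-th factor. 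Every section of $\Oo_Y(\hat{\epsilon}_{i_0})$ is pulled back along $\eta_{i_0}$ and hence automatically kills the $n_{i_0}$ tangent directions of $(2z,Y)$ along that factor; therefore $h^1\bigl(\Ii_{\Res_H(2S,Y)}(\hat{\epsilon}_{i_0})\bigr)\ge n_{i_0}>0$ no matter how $L$ is chosen, and the residual sequence can never yield $\delta(2S,Y)=0$. This is precisely the obstruction the paper circumvents by specializing \emph{all} of $S$ into a divisor: it projects the $\PP^2$ factor from a general point onto a line, realizes this projection as the limit of a family of automorphisms $g_\lambda$, and uses semicontinuity together with part \ref{a1disuguaglianze} of Lemma \ref{a1} (where the residual scheme is only the reduced set $S$, whose $h^1$ is controllable) to get $\delta(2S,Y)\le\delta(2f_o(S),H)$, finally transporting the Example structure back because $f_o$ is the identity on the remaining factors. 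Some such specialization seems unavoidable; without it your iteration down to $(\PP^1)^5$ does not get started.

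Two further points. First, the combinatorial finish of your inductive step is not yet a proof: from ``for every $i$ some pair of $S$ differs in exactly $3$ coordinates, one of them being $i$'' the naive pigeonhole over the three pairs only yields a contradiction for $k\ge 10$; using the extra structure of Example \ref{a4.0} (the third point must differ from both others in all remaining coordinates of $Y_i$) one can push this down, but $k=6$ in particular needs a separate argument that you have not supplied. Second, the $h^0=0$ alternative you worry about never occurs, since $\prod_{j\ne i}(n_j+1)>3(\dim Y_i+1)$ whenever $Y_i$ has at least five factors with all $n_j\le 2$; a direct count suffices and no appeal to Proposition \ref{1lu1} is needed.
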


\begin{proof}
We proceed by induction on the integer $t:= \dim Y-k$.\\
The base case $t=0 $ corresponds to Lemma \ref{o3}. Assume $t>0$ and that the lemma is true for any multiprojective space $ Y$ of dimension at most $ k+t-1$. Since $t>0$, there exists at least an index $ i$ such that $n_i=2$, without loss of generality we may assume $i=1$. Fix
$S\in
\TT(Y,3)$. So we know that $\delta (2S,Y)>0$ and
$Y$ is the minimal multiprojective space containing $S$. Thus $\pi _{1|S}$ is injective and $\langle \pi_1(S)\rangle = \PP^2$.
Fix $o\in \PP^2\setminus \pi_1(S)$. Choose a system of homogeneous coordinates $\{x_0,x_1,x_2\}$ of $\PP^2$ such that $o =[1:0:0]$, the line $L:= \{x_0=0\}$ contains no point of $\pi_1(S)$ and $o$ is not contained in one of the $3$ lines spanned by $2$ of
the points of $\pi_1(S)$. Let $\ell _o:\PP^2\setminus \{o\}\to L$ denotes the linear projection from $o$, i.e. the rational map defined by $[a_0:a_1:a_2]\mapsto [0:a_1:a_2]$. 

Write $Y =\PP^2\times Y'$ with $Y' =\prod_{i>1}\PP^{n_i}$ and set
$H:= L\times Y'\in |\Oo_Y(\epsilon _1)|$. The morphism $\ell _o$ extends to a morphism 
\begin{align*}
f_o: Y& \longrightarrow H\\
(a,b)&\mapsto (\ell _o(a),b),
\end{align*}
which is defined for any $a\in \PP^2\setminus
\{o\}$ and any $b\in Y'$. We remark that $\#f_o(S) =3$ and that $H$ is the minimal multiprojective subspace of $Y$
containing $f_o(S)$.\\ For each $\lambda \in \KK\setminus \{0\}$ let $u_\lambda: \PP^2\to \PP^2$ denotes the automorphism of
$\PP^2$ defined by the formula $[a_0:a_1:a_2]\mapsto [\lambda a_0:a_1:a_2]$. Let $\KK'\subseteq \KK\setminus \{0\}$ be the
set of all $\lambda \in \KK\setminus \{0\}$ such that no line spanned by $2$ of the points of $u_\lambda (\pi _1(S))$
contains $o$. For each $\lambda \in \KK'$ we have $\#u_\lambda(\pi_1(S))=3$ and $u_\lambda(\pi_1(S))$ spans $\PP^2$. \\For each $\lambda
\in \KK'$ define
\begin{align*}
g_\lambda : Y& \longrightarrow Y\\
(a,b)&\mapsto (u_\lambda (a),b).
\end{align*}
Composing $f_o$ with the inclusion $j: H\subset Y$ we see that the rational map $j\circ f_o$ is a
limit for $\lambda$ going to $0$ of the family $\{g_\lambda\}_{\lambda \in \KK'}$ of automorphisms of $Y$. By the
semicontinuity theorem for cohomology $\delta (2(j\circ f_o(S)),Y)\ge \delta (2S,Y)>0$.
\begin{quote}
    \begin{claim}\label{claimju1}
$\delta (2g_0(S),H) = \delta (2(j\circ f_o(S)),Y)$.
\end{claim}
\begin{proof}
Since $\dim Y =\dim H+1$, part (a) of Lemma \ref{a1} gives  $\delta (2g_0(S),H) \le \delta
(2(j\circ f_o(S)),Y)
\le
\delta (2g_0(S),H)+h^1\left(\Ii _S(\hat{\epsilon}_1)\right) $. To conclude the proof of Claim \ref{claimju1} it is sufficient to prove that $h^1\left(\Ii
_S(\hat{\epsilon}_1)\right)=0$. Assume $h^1\left(\Ii _S(\hat{\epsilon}_1)\right)>0$. By \cite[Lemma 4.4]{BBS} either there are $u, v\in S$
such that $u\ne v$ and $\eta_1(u)=\eta _1(v)$ or there is $i\in \{2,\dots ,k\}$ such that $\#\pi _h(S) =1$ for all $h\in
\{2,\dots ,k\}\setminus \{i\}$. In the former case, i.e. if $\pi _i(u)=\pi_i(v)$ for all $i>1$, $S$ is as in Example \ref{ex1}.
In the second case we are either in Example \ref{a4.0} or in Example \ref{ex1}.
\end{proof}
\end{quote}

By Claim \ref{claimju1} and the inequality $h^0\left(\Oo_H(1,\dots ,1)\right) > 3\dim H$ (true because $k\ge 5$) $f_o(S)\in \TT(H,3)$. By the
inductive assumption
$f_o(S)$ is as in one of the Examples
\ref{a4.0} or
\ref{ex1} and in particular $n_h=1$ for at least $k-2$ of the last $(k-1)$ indices $h$, say for $h\in \{3,\dots ,k\}$. Moreover
there is $A\subset f_o(S)$ such that $\#A=2$ and $\#\pi_h(A)=1$ for all $h>2$. Since $f_0$ act as the identity on the last
$(k-1)$ components of any $p\in Y\setminus H$, we get that $S$ is described by the same Example which describes $f_o(S)$.
\end{proof}

\begin{lemma}\label{lu5}
Take $Y =\PP^{n_1}\times \PP^{n_2}\times \PP^{n_3}\times \PP^{n_4}$ with $n_i\in \{1,2\}$ for all $i$
and $n_1+n_2+n_3+n_4\ge 5$. If $S\in \TT(Y,3)$, then $S$ is either as in Example \ref{a4.0} or as in Example \ref{ex1}.
\end{lemma}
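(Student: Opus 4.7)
We argue by induction on $N := \#\{i \in \{1,2,3,4\} : n_i = 2\} \in \{1,2,3,4\}$. For the inductive step $N \ge 2$, we copy the degeneration-projection construction of Lemma~\ref{ju1} applied to a factor with $n_i = 2$, say $i = 1$: choose $o \in \PP^2 \setminus \pi_1(S)$ in general position, let $L \subset \PP^2$ be a line avoiding $o$, extend the linear projection from $o$ to the rational map $f_o \colon Y \dashrightarrow H := L \times \PP^{n_2} \times \PP^{n_3} \times \PP^{n_4}$, and use the one-parameter family $\{g_\lambda\}$ of automorphisms of $Y$ degenerating to $j \circ f_o$ as $\lambda \to 0$. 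Semicontinuity yields $\delta(2(j \circ f_o(S)), Y) \ge \delta(2S, Y) > 0$, and Claim~\ref{claimju1} then gives $\delta(2 f_o(S), H) > 0$: the exceptional configurations with $h^1(\Ii_S(\hat{\epsilon}_1)) > 0$ already place $S$ in Example~\ref{ex1} or~\ref{a4.0}, so we may assume its hypothesis. Since at least one of $n_2, n_3, n_4$ equals $2$, a direct check shows $h^0(\Oo_H(1,1,1,1)) > 3 \dim H$, whence $f_o(S) \in \TT(H, 3)$. The inductive hypothesis, applied to $H$ (which has $N - 1$ factors of type $\PP^2$), places $f_o(S)$ in one of the Examples; since $f_o$ acts as the identity on the last three factors, so does $S$.

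For the base case $N = 1$, up to permutation $Y = \PP^2 \times \PP^1 \times \PP^1 \times \PP^1$. Write $S = \{a, b, c\}$ and set $d_{xy} := \#\{i \in \{2, 3, 4\} : \pi_i(x) = \pi_i(y)\}$. Minimality of $Y$ forces $\#\pi_1(S) = 3$ with $\langle \pi_1(S) \rangle = \PP^2$ and $\#\pi_i(S) = 2$ for $i = 2, 3, 4$; the latter implies $d_{ab} + d_{ac} + d_{bc} = 3$, so $\{d_{ab}, d_{ac}, d_{bc}\}$ equals $\{3, 0, 0\}$, $\{2, 1, 0\}$, or $\{1, 1, 1\}$. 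The pattern $\{3, 0, 0\}$ is Example~\ref{ex1} with $m = 2$, $k = 4$; the pattern $\{2, 1, 0\}$, after an appropriate relabeling of the three $\PP^1$-factors and of the points of $S$, matches Example~\ref{a4.0} with $\{n_1, n_2\} = \{1, 2\}$ and $k = 4$.

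To rule out the symmetric pattern $\{1, 1, 1\}$, assume without loss of generality $\pi_2(a) = \pi_2(b)$, $\pi_3(a) = \pi_3(c)$, $\pi_4(b) = \pi_4(c)$, all other projections being distinct. Set $H := \pi_4^{-1}(\pi_4(b)) \in |\Oo_Y(\epsilon_4)|$, so $H \cong \PP^2 \times \PP^1 \times \PP^1$, $S \cap H = \{b, c\}$, and $\Res_H(2S, Y) = \{b, c\} \cup (2a, Y)$. The residual exact sequence
$$0 \to \Ii_{\{b,c\} \cup (2a, Y)}(\hat{\epsilon}_4) \to \Ii_{(2S, Y)}(1,1,1,1) \to \Ii_{(2\{b,c\}, H)}(1,1,1) \to 0$$
reduces $h^1(\Ii_{(2S, Y)}(1,1,1,1)) = 0$ to the vanishing of both flanking $h^1$'s. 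On the quotient, $\{b, c\}$ has distinct images in every factor of $H$ and $h^1(\Ii_{\{b,c\}}(0,1,1)) = 0$, so the equality statement in the proof of Lemma~\ref{a1}(a) together with Proposition~\ref{d1} yields $\delta(2\{b, c\}, H) = 0$. For the kernel, K\"unneth identifies it with $h^1(Y_4, \Ii_T(1,1,1))$ where $T := \eta_4(\{b, c\}) \cup (2\eta_4(a), Y_4) \subset Y_4 \cong \PP^2 \times \PP^1 \times \PP^1$ has degree $7$; since no two of $\eta_4(a), \eta_4(b), \eta_4(c)$ differ in exactly one factor of $Y_4$, a direct rank computation shows that $\langle T_{\nu(\eta_4(a))} \nu(Y_4), \nu(\eta_4(b)), \nu(\eta_4(c)) \rangle$ has projective dimension $6$ in $\PP^{11}$, so $h^1(Y_4, \Ii_T(1,1,1)) = 0$. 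Hence $h^1(\Ii_{(2S, Y)}(1,1,1,1)) = 0$, contradicting $S \in \TT(Y, 3)$. The $\{1, 1, 1\}$ case is the main obstacle: because Example~\ref{a4.0}(v) applied to $\eta_4(S) \subset Y_4$ gives $\delta(2\eta_4(S), Y_4) > 0$, Proposition~\ref{1lu2} cannot be invoked, and the rank-$6$ linear-span calculation on the residual scheme $T$ is the critical technical input.
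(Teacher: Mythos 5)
Your inductive step ($N\ge 2$, projecting from a general point of a $\PP^2$ factor and degenerating via $g_\lambda$) is exactly the paper's argument, and your handling of the $\{1,1,1\}$ pattern via the residual sequence with respect to $H=\pi_4^{-1}(\pi_4(b))$ is a legitimate (and arguably nicer) replacement for the paper's Macaulay2 check of that configuration: the rank computation you invoke does come out right, since in suitable coordinates $\nu(\eta_4(b))$ and $\nu(\eta_4(c))$ are coordinate tensors independent of the five coordinate tensors spanning $T_{\nu(\eta_4(a))}\nu(Y_4)$.

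However, your base case has a genuine gap. The assertion that minimality of $Y=\PP^2\times\PP^1\times\PP^1\times\PP^1$ forces $\#\pi_i(S)=2$ for $i=2,3,4$ is false: minimality only forces $\#\pi_i(S)\ge 2$, and $\#\pi_i(S)=3$ is perfectly possible. Consequently $d_{ab}+d_{ac}+d_{bc}=\#\{i\ge 2:\ \#\pi_i(S)=2\}$ can be $0,1$ or $2$, not just $3$, and your trichotomy $\{3,0,0\},\{2,1,0\},\{1,1,1\}$ omits several configurations that are not in Example \ref{ex1} or \ref{a4.0} and must be ruled out separately. Concretely: (i) the case $\#\pi_i(S)=3$ for all $i$, which the paper kills using the non-defectivity $\dim\sigma_3(\nu(Y))=17$ from Abo--Ottaviani--Peterson \cite{aop} — a genuine external input your argument never touches; (ii) the case where exactly one index $i\ge 2$ has $\#\pi_i(S)=2$; and (iii) the case $\#\pi_2(S)=3$, $\#\pi_3(S)=\#\pi_4(S)=2$ with the two coincidences realized by \emph{different} pairs (the paper's case \ref{lu52}), which again is in neither Example and requires its own computation. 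None of these is covered by your three patterns, so the base case — and hence the whole induction — is incomplete as written.
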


\begin{proof}
We will show the result by induction on the integer $ t=n_1+\cdots +n_5-5\geq0$. First assume $ t=0$, i.e. $n_1+n_2+n_3+n_4=5$.
 With no loss of generality we may assume $Y =\PP^2\times \PP^1\times \PP^1\times \PP^1$.
Since $Y$ is the minimal multiprojective space containing $S$ and $n_1=2$, $\pi _{1|S}$ is injective. Assume for the moment $\pi _{i|S}$ injective for $i=2,3,4$.
Since $\mathrm{Aut}(\PP^1)$ is $3$-transitive, $S$ is in the same orbit for the action of $\mathrm{Aut}(\PP^2)\times \mathrm{Aut}(\PP^1)^3$ of $3$ general points of $Y$. 
We know that
 $\dim \sigma _3(\nu(Y)) = 17$ (\cite[Theorem 4.5]{aop}),  so $\delta(2S,Y)=0 $ contradicting the assumption. Thus we may assume $\#\pi _i(S)=2$ for some $i\in 2,3,4$. With no loss of generality we may assume that at least
$\#\pi _3(S)=2$. Since $\pi _{1|S}$ is injective, $\eta _{4|S}$ is injective. The set $ \eta_4(S)$ is as in case \ref{a4.0itemv} of Example \ref{a4.0} whether $ \#\pi_i(S)=2$ just for the index $ i=3$ or not.
Using $\eta _2$ and $\eta _3$ instead of $\eta _4$ we see the existence of at least two indices $h\in \{2,3,4\}$ such that
$\#\pi _h(S)=2$. With no loss of generality we may assume $\#\pi _3(S)=\#\pi _4(S)=2$, i.e. neither $\pi _{3|S}$ nor
$\pi_{4|S}$ are injective.  If there is $S'\subset S$ such that $\#S'=2$ and $\#\pi _3(S')=\#\pi _4(S')=1$, then we are in
Example \ref{a4.0} or Example \ref{ex1}. The non-existence of such $S'$ shows that we may name $S =\{a,b,c\}$ so that $\pi
_4(a)=\pi _4(b)$, $\pi_3(a) =\pi _3({c})$.  We distinguish two cases:
\begin{enumerate}[label=(\roman*)]

\item\label{lu51} $\#\pi _2(S) =2$;
\item\label{lu52}  $\#\pi _2(S) =3$.
\end{enumerate}
Write $a =[a_1,a_2,a_3,a_4]$, $b=[b_1,b_2,b_3,b_4]$ and $c=[c_1,c_2,c_3,c_4]$.
Since $\mathrm{Aut}(\PP^2)$ is transitive on the set of all triples of linearly independent points, we may assume
$a_1 =[1:0:0]$, $b_1=[0:1:0]$ and $c_1=[0:0:1]$. Since
$\mathrm{Aut}(\PP^1)$ is
$3$-transitive we may assume $a_2=a_3=a_4=\alpha$, $b_3=\beta$, $b_4=\alpha$, $c_3=\alpha$ and $c_4=\beta$, for some $\alpha\neq \beta \in \mathbb{P}^1 $.  Moreover, in case \ref{lu51} we may assume
$b_2=c_2=\beta$, while in case \ref{lu52} we may assume  $b_2=\beta$ and $c_2=\gamma$, for some $ \gamma \in \mathbb{P}^1 $ with $\gamma \neq \alpha,\beta$. For both cases, by direct computation one gets  $h^0\left(\mathcal{I}_{(2S,Y)}(1,1,1,1)\right)=17$, so $\delta(2S,Y)=0 $ contradicting the assumption.

Now assume $ t>0$, i.e. $n_1+n_2+n_3+n_4\ge 6$. As in the proof of Lemma \ref{ju1}, we will use a linear projection from a general point of a
$2$-dimensional factor of $Y$ to conclude by induction on the integer $n_1+n_2+n_3+n_4$.

\end{proof}

\begin{theorem}\label{i33}

Let $Y$ be the minimal multiprojective space of $k\geq 1$ factors containing a set $S$ of 3 points. Then $\TT(Y,3) $ is empty if and only if either $Y= (\mathbb{P}^2)^i$, $i=1,2,3$ or $Y= \PP^{n_1}\times \PP^{n_2} $, $n_1,n_2 \in \{1,2\}$. Moreover the non-empty $S \in \TT(Y,3) $ can only be either as in Example \ref{ex1} or as in Example \ref{a4.0} or $Y=(\PP^1)^4$, in this last case $ S(Y,3) \subset \TT(Y,3)$.

\end{theorem}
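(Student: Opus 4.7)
My plan is to prove the theorem by a case analysis organized by the number $k$ of factors of $Y$ and the multiset of dimensions $(n_1,\dots,n_k)$. The first reduction I would make is the elementary observation that since $\#S=3$, we have $\dim\langle \pi_i(S) \rangle \leq 2$ for each $i$, so minimality of $Y$ forces every factor to be $\PP^1$ or $\PP^2$; this already cuts the universe of admissible $Y$ down to finitely many patterns for each $k$, and for $k\geq 5$ immediately triggers the hypothesis of Lemma \ref{ju1}.

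Next I would treat the small cases $k=1,2$ (that is, $Y$ among $\PP^1$, $\PP^2$, $\PP^1\times\PP^1$, $\PP^1\times\PP^2$ and $\PP^2\times\PP^2$) by reducing to a general $S$ via the transitivity of $\prod_i \Aut(\PP^{n_i})$ on the open orbit of triples whose projections span each factor, and then either solving the small bilinear system directly to see $h^0\left(\Ii_{(2S,Y)}(1,\dots,1)\right)=0$ (this is quick for $\PP^1\times\PP^1$ and $\PP^1\times\PP^2$) or invoking the known non-defectivity of the relevant $\sigma_3$ (e.g.\ \cite{aop}) to conclude the same vanishing; in each of these cases $\TT(Y,3)=\emptyset$. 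For $k=3$ the four admissible dimension patterns are dispatched by the preceding results: Lemma \ref{lu3} for $(\PP^2)^3$, Lemma \ref{a4} for $\PP^2\times\PP^2\times\PP^1$, Lemma \ref{b1} for $\PP^2\times\PP^1\times\PP^1$, and Lemma \ref{lu2} for $(\PP^1)^3$; each either proves emptiness or characterizes $S\in\TT(Y,3)$ as a configuration of Example \ref{ex1} or Example \ref{a4.0}. For $k=4$ the case $Y=(\PP^1)^4$ is precisely Remark \ref{lu1}, while every other pattern with $n_i\in\{1,2\}$ and $\sum n_i\geq 5$ is covered by Lemma \ref{lu5}. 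Finally for $k\geq 5$, Lemma \ref{ju1} reduces any $S\in\TT(Y,3)$ to one of the examples and its ``in particular'' clause directly yields $\TT(Y,3)=\emptyset$ whenever $n_i=2$ at more than two indices.

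What remains is the bookkeeping: for each $Y$ not in the listed ``empty'' family I must exhibit at least one $S\in\TT(Y,3)$, which is immediate from the constructions in Examples \ref{ex1} and \ref{a4.0} together with Remark \ref{lu1}. The main obstacle I anticipate is therefore not a mathematical difficulty but the completeness of the case distinction: I need to ensure that every admissible tuple $(k;n_1,\dots,n_k)$ with $n_i\in\{1,2\}$ is assigned to exactly one of the preceding ingredients, and that both directions of the ``if and only if'' in the statement are explicitly verified. All substantive work is already contained in Lemmas \ref{lu2}--\ref{lu5} and in Examples \ref{ex1} and \ref{a4.0}, so at this point the proof is essentially an assembly of the previously established material.
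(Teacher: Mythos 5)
Your proposal is correct and follows essentially the same route as the paper: reduce to factors $\PP^1$ or $\PP^2$ by minimality, dispatch $k\leq 2$ by orbit/secant-variety arguments, invoke Lemmas \ref{lu2}, \ref{lu3}, \ref{b1}, \ref{a4} for $k=3$, Remark \ref{lu1} and Lemma \ref{lu5} for $k=4$, and Lemma \ref{ju1} for $k\geq 5$, then read off the non-empty cases from Examples \ref{ex1} and \ref{a4.0}. The only point to tighten is the case $Y=\PP^1\times\PP^2$, where the triples with minimal $Y$ do not form a single orbit (since $\#\pi_1(S)$ may be $2$ or $3$); the paper handles this by passing to a two-point subset $S'$ with $\#\pi_1(S')=2$ and using $\sigma_2(\nu(Y))=\PP^5$ to get $h^0\left(\Ii_{(2S',Y)}(1,1)\right)=0$, which forces $h^0\left(\Ii_{(2S,Y)}(1,1)\right)=0$ for every such $S$.
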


\begin{proof}

Let $S\in \TT(Y,3) $ such that $ Y$ is the minimal multiprojective space containing $ S$, so $Y=\PP^{n_1}\times \cdots \times \PP^{n_k} $ where all $n_i\in \{1,2 \} $.

If $k=1$ we always have $h^0(\Ii _{2S}(1)) =0$, thus the case of $Y=\mathbb{P}^2$ is clear.

Assume $k=2$. In this case $Y =\PP^{n_1}\times \PP^{n_2}$ with $1\le n_1\le 2$ and $1\le n_1\le 2$. If $n_1=n_2=1$, then obviously $h^0(\Ii _{2S}(1,1))=0$.
If $n_i=2$, then $\pi _{i|S}$ is injective
and $\pi _i(S)$ is linearly independent.

Thus if $n_1=n_2=2$, then $S$ is in open orbit for the action of $\mathrm{Aut}(\PP^2)\times \mathrm{Aut}(\PP^2)$  of $S(Y,3)$. Since a general $3\times 3$ matrix has rank $3$ we get $\sigma _3(\nu (Y)) = \PP^8$. Hence $h^0(\Ii_{(2S,Y)}(1,1)) =0$, contradicting the assumption $S\in \TT(Y,3)$. 

Now assume $n_i=1$ for exactly one $i$, say for $i=1$. Since $Y$ is the minimal multiprojective space containing containing $S$, $\#\pi_i(S) \ge 2$. Fix $S'\subset S$ such that $\#S' =\#\pi _1(S') =2$. $S'$ is in the open orbit for the action of $\mathrm{Aut}(\PP^1)\times \mathrm{Aut}(\PP^2)$ on $S(Y,2)$. Since a general $2\times 3$ matrix has rank $2$,
$\sigma _2(\nu(Y)) =\PP^5$. Thus $h^0(\Ii _{(2S',Y)}(1,1)) =0$. Hence $h^0(\Ii_{(2S,Y)}(1,1)) =0$, contradicting the assumption $S\in \TT(Y,3)$. 
This concludes the case of two factors.

The case of $k=3 $ is completely covered by Lemmas \ref{lu2}, \ref{lu3}, \ref{b1} and \ref{a4}.

In the case of $k=4$ there is
the defective 3-rd secant variety of the Segre embedding of $Y=(\PP^1)^4 $ (cf. Remark \ref{lu1}).

For any other couple $(S,Y) $ where $Y\ncong (\PP^1)^4 $, Lemma  \ref{lu5} shows that $ S$ must be either as in Example \ref{ex1} or as in Example \ref{a4.0}.

If $k\ge 5$ it is sufficient to use Lemma \ref{ju1}.
\end{proof}

\section{Computing the maximal $ r$-th Terracini defect}
\phantom{a}

 Fix any multiprojective space $ Y $ of dimension $ n>0$. 
 For any $p\in Y$ the very ampleness of $\Oo_Y(1,\dots ,1)$ implies $h^1\left(\Ii_{(2p,Y)}(1,\dots
,1)\right)=0$.
For any integer $r\ge 2$ there are many $S\in S(Y,r)$ with $\delta(2S,Y) >0$. We will show  that the maximal value of all such integers $\delta(2S,Y) >0$ for some multiprojective space $ Y$ of dimension $n$, is obtained when $Y =\PP^n$ (cf. Proposition \ref{g1}). But of course
$h^0\left(\PP^n,\Ii _{(2S,\PP^n)}(1)\right) =0$ for any finite set $S\subset \PP^n$ with $S\ne \emptyset$. 

\begin{definition}
For any integer $n>0$, denote by $\Uu(n)$ the set of all isomorphism classes of multiprojective spaces $Y$ such that $\dim
Y=n$.

For any integer $r\ge 2$, $n\ge
2$ define 
$$\Ee (n,r):=\{ (Y,S)\in \Uu(n) \times S(Y,r) \; \vert \; S \in \TT_1(Y,r)\},$$
$$\EE (n,r):=\{ (Y,S)\in \Uu(n) \times S(Y,r) \; \vert \; S \in \TT(Y,r)\}.$$
\end{definition}

 The set of all
$(n,r)$ such that
$\Ee(n,r)\ne
\emptyset$ is easily computed in Lemma \ref{g2} 
and we will show that $\Ee(n,r)\ne \emptyset$ if and only if $n\ge 3$ and $r\ge 2$.

\begin{notation}
Fix integers $n,r>0 $. Denote by 
$$\delta_1(n,r):=\max \{ \delta(2S,Y)\; \vert \; (Y,S)\in \Ee(n,r)  \}. $$ 
\end{notation}
We remark that given any $S\in S(Y,r) $ such that $h^0\left(\Ii _{(2S,Y)}(1,\dots ,1)\right) >0$, asking whether $ S\in \TT_1(Y,r)$ is equivalent to request that $ \delta(2S,Y)>0$. Similarly, if $S\in S(Y,r) $ is such that $ \delta(2S,Y)>0$, then to show that $S \in \TT_1 (Y,r)$ it suffices to prove $h^0\left(\Ii_{(2S,Y)}(1,\dots,1)\right)>0 $. In Proposition \ref{g3} we will show that 
$$\delta_1(n,r) =(r-1)(n+1)-1.$$ 
If we also prescribe that $(Y,S)\in \EE(n,x)$, i.e. if we assume that
$Y$ is the minimal multiprojective space containing $S$, then we get the definition of the integer $\delta (n,x)$.

\begin{proposition}
\label{g1}
Fix integers $n>0$ and $r\ge 2$. Fix $Y\in \Uu(n)$ and $S\in S(Y,r)$. Then
$$h^1\left(\Ii_{(2S,Y)}(1,\dots,1)\right)\leq (r-1)(n+1) .$$
The equality holds if and only if $ Y=\PP^{n}$.
\end{proposition}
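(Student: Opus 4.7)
The plan is to deduce the bound directly from the restriction exact sequence for $\Oo_Y(1,\dots,1)$ combined with the fact that any single double point imposes its maximum $n+1$ conditions, and then to characterize the equality case as forcing all the Segre tangent spaces at the points of $\nu(S)$ to coincide.

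Since $h^1(\Oo_Y(1,\dots,1))=0$ by K\"unneth and $\deg(2S,Y)=r(n+1)$, the sequence
$$0\to \Ii_{(2S,Y)}(1,\dots,1)\to \Oo_Y(1,\dots,1)\to \Oo_{(2S,Y)}\to 0$$
gives
$$h^1\bigl(\Ii_{(2S,Y)}(1,\dots,1)\bigr)=r(n+1)-h^0\bigl(\Oo_Y(1,\dots,1)\bigr)+h^0\bigl(\Ii_{(2S,Y)}(1,\dots,1)\bigr).$$
Very ampleness of $\Oo_Y(1,\dots,1)$ yields $h^0(\Ii_{(2p,Y)}(1,\dots,1))=h^0(\Oo_Y(1,\dots,1))-(n+1)$ for any $p\in Y$; fixing $p\in S$ and using the inclusion $\Ii_{(2S,Y)}\subseteq\Ii_{(2p,Y)}$, substitution into the displayed identity produces the desired bound $h^1\le (r-1)(n+1)$.

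Equality forces $H^0(\Ii_{(2S,Y)}(1,\dots,1))=H^0(\Ii_{(2p,Y)}(1,\dots,1))$ for every $p\in S$, and running this over all $p$ shows that $H^0(\Ii_{(2q,Y)}(1,\dots,1))$ is the same subspace of $H^0(\Oo_Y(1,\dots,1))$ for every $q\in S$. Projectively this is the statement that the tangent spaces $T_{\nu(q)}X\subset\PP^N$ all coincide as the $r$ points of $\nu(S)$ vary. If $Y=\PP^n$ then $\nu=\id$ and each tangent space is all of $\PP^n$, so the bound is attained by every $S$.

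For the converse, assume $Y$ has $k\ge 2$ factors with every $n_i\ge 1$; I would show that the Segre tangent space $T_{\nu(p)}X$ determines $p$, which together with $r\ge 2$ contradicts the common-tangent conclusion. Using
$$\widehat{T_{\nu(p)}X}=\sum_{i=1}^k \langle p_1\rangle\otimes\cdots\otimes V_i\otimes\cdots\otimes \langle p_k\rangle,$$
the argument proceeds in two steps: first, suitable double quotient maps $V_1\otimes\cdots\otimes V_k\to V_{i_1}/\langle p_{i_1}\rangle\otimes V_{i_2}/\langle p_{i_2}\rangle$ (combined with evaluations at the remaining $p_h$) vanish on $T_{\nu(p)}X$, and running this over all pairs $(i_1,i_2)$ forces $\nu(q)\in T_{\nu(p)}X$ to imply that $p$ and $q$ differ in at most one coordinate; second, in the remaining case one separates the two tangent spaces by a similar tensor of a quotient $V_i\to V_i/\langle p_i\rangle$ for a coordinate $i$ where $p_i=q_i$ together with a functional on $V_j$ vanishing on $\langle q_j\rangle$ but not on $p_j$. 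This tangent-space injectivity is the main technical obstacle, and it depends crucially on $k\ge 2$ and $n_i\ge 1$ for all $i$, which is exactly what excludes the single-factor case $Y=\PP^n$.
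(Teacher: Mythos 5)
Your proof is correct, and while your argument for the inequality is essentially the paper's (both reduce to $h^0\left(\Ii_{(2S,Y)}(1,\dots,1)\right)\le h^0\left(\Ii_{(2p,Y)}(1,\dots,1)\right)=h^0\left(\Oo_Y(1,\dots,1)\right)-(n+1)$, i.e.\ a single double point imposes independent conditions, fed into the restriction sequence), your treatment of the equality case takes a genuinely different route. The paper runs a double induction: it reduces $r>2$ to $r=2$ by subadditivity of $\delta$, and for $r=2$ it observes that equality forces $T_{\nu(u)}X=T_{\nu(v)}X$, deduces that the line $\langle \nu(u),\nu(v)\rangle$ meets $X$ in a scheme of length at least $3$, uses that $X$ is cut out by quadrics to conclude that this line is a coordinate ruling of $X$, and then applies concision (Lemma \ref{a1}, part \ref{a1unfattore}) plus the inductive hypothesis on $n$. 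You instead show directly, for every $r$ at once, that equality forces all the subspaces $H^0\left(\Ii_{(2p,Y)}(1,\dots,1)\right)$, $p\in S$, to coincide, and then prove by explicit multilinear functionals that on a Segre with $k\ge 2$ factors two distinct points never have the same embedded tangent space: the double-quotient forms show that a point of $X$ lying on $T_{\nu(p)}X$ must agree with $p$ in all but at most one coordinate, and the single-quotient form (quotient in a coordinate where $p$ and $q$ agree, functional killing $q_j$ but not $p_j$ in the coordinate where they differ) separates the two tangent spaces in the residual case; note that this last form lies in $H^0\left(\Ii_{(2q,Y)}(1,\dots,1)\right)\setminus H^0\left(\Ii_{(2p,Y)}(1,\dots,1)\right)$, so it contradicts the equality of these spaces without even passing through the geometric reformulation. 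Your approach is more elementary and self-contained -- no induction on $n$ or $r$, no appeal to $X$ being cut out by quadrics, no use of the concision lemma -- and it proves the stronger fact that $p\mapsto T_{\nu(p)}X$ is injective for a Segre with at least two positive-dimensional factors; the paper's route, by contrast, recycles machinery it needs elsewhere. The only imprecision is the phrase ``evaluations at the remaining $p_h$'': what is meant is tensoring with functionals $\lambda_h\in V_h^{*}$ that do not vanish at the relevant components, and with that reading every step checks out.
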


\begin{proof}

Fix $Y\in \Uu (n)$, say $Y =\PP^{n_1}\times \cdots \times  \PP^{n_k}$ with $n_i>0$ for all $i$'s and $n_1+\cdots +n_k =n$ and assume $k\geq 2 $, i.e. assume $Y\ncong \PP^n$.
Fix $S\in S(Y,r)$ and take $o\in S$. Since $\Oo _Y(1,\dots ,1)$ is very ample, we have $h^1\left(\Ii _{(2o,Y)}(1,\dots ,1)\right) =0$. Thus 
by \eqref{eqm2} of Remark \ref{m1} we get 
 $h^1\left(\Ii _{(2S,Y)}(1,\dots ,1)) \le \deg (2(S\setminus \{o\}),Y)\right) =(r-1)(n+1)$, concluding the proof of the inequality. 
 
The ``\,if\,'' part of the equality is clear, so we just need to prove the ``\,only if\,'' part. We will use induction on the integer $n$ starting with the case $n=2$.

Let $ n=2$ and assume by contradiction that $ Y\ncong \PP^2$, so $Y=\PP^1\times \PP^1 $.
Thus $h^0\left(\Oo _Y(1,1)\right) =4$. Since each tangent plane of $\nu (\PP^1\times \PP^1)$ is tangent to a unique point of the smooth quadric $\nu (\PP^1\times \PP^1)$  and $ r\geq2$, we have $h^0\left(\Ii _{(2S,Y)}(1,1)\right) =0$ and hence  $h^1\left(\Ii _{(2S,Y)}(1,1)\right) =3(r-1)-1\neq 3(r-1)$. Now assume $n>2$. 
We distinguish two different cases depending on whether $ r=2$ or not.

\begin{enumerate}[leftmargin=+.2in, label=(\alph*)]
\item\label{g1induzionea} Assume $r=2$ and write $S =\{u,v\}$. Assume by contradiction that $Y\ncong \PP^n $. We remark that $h^1\left(\Ii _{(2S,Y)}(1,\dots ,1)\right) = n+1$. This implies that the Zariski tangent spaces $T_{\nu(u)}\nu(Y)$ and $T_{\nu(v)}(Y)$ of $\nu(Y)$ at $\nu (u)$ and $\nu(v)$ are the same.  Since $\nu (v)\in T_{\nu(u)}\nu (Y)$, the line $L:= \langle \{\nu (v),\nu (u)\}\rangle$  contains two points of $T_{\nu (u)}\nu(Y)$ and hence it is contained in $T_{\nu(u)}\nu (Y)$. Since $\nu(u)\in L$, $L$ is tangent to $\nu (Y)$ at $\nu(u)$. Hence $L\cap \nu(Y)$ contains a zero-dimensional scheme of degree strictly grater than $2$.
Since $\nu (Y)$ is scheme-theoretically cut out by quadrics, we get $L\subset \nu (Y)$, i.e. there is $D\subset Y$, such that $\nu (D)=L$, $D\cong \PP^1$ and $\#\pi _i(D)=1$ for $k-1$ indices $i$.  Let $i \in \{ 1,\dots,k\} $ be the index such that $ \#\pi_i(S)\neq 1$.
Since $\# \pi_j(S)=1 $ for all $j\neq i $ and $ S\subset D$, by case \ref{a1unfattore} of Lemma \ref{a1}, we know that $\delta(2S,Y)=\delta(2S,\PP^{n_i}) $, where $n_i<n $. By the inductive assumption we get $\delta(2S,\PP^{n_i})=n_i+1<n+1 $ which is absurd since by assumption $\delta(2S,Y)=n+1 $.
 
\item Assume $r>2$. Write $S = A\cup B$ with $\#A=2$ and $\#B = r-2$. By part \ref{g1induzionea} we have $h^1\left(\Ii _{(2A,Y)}(1,\dots ,1)\right) \le n$. Thus by \ref{eqm2} of Remark \ref{m1} we get $h^1\left(\Ii _{(2S,Y)}(1,\dots ,1)\right)
\le h^1\left(\Ii _{(2A,Y)}(1,\dots ,1)\right)+\deg (2B,Y) \le n+(r-2)(n+1)$, which is absurd since by assumption $h^1\left(\Ii_{(2S,Y)}(1,\dots,1)\right)=(r-1)(n+1) $.
\end{enumerate}
\vspace{-0.5cm}
\end{proof}

\begin{example}\label{g0}
Let $ n\geq 3$, fix an integer $ 1\leq \mu \leq n-1$ and let $ r\geq \mu +1$. Let $L\subset \PP^{n-1} $ be a $ \mu$-dimensional linear subspace and let $ Y:=\PP^{n-1}\times \PP^1$. Fix $ o\in \PP^1$ and a finite set $ S\subset L\times \{ o\}$ with  $\#S=r$ and such that $ \langle \pi_1(S) \rangle=L$. The aim of this example is to show that
$\delta (2S,Y) = (r-1)(n+1)-\mu$. 

Take $H:= \pi _2^{-1}(o)\in |\Oo _Y(\epsilon _2)|$. Note that $S\subset H$. Thus the residual exact sequence of $ (2S,Y)$ with respect to $ H$ is
\begin{equation}\label{u+1}
0 \to \Ii_S(1,0) \to \Ii_{(2S,Y)}(1,1)\to \Ii_{(2S,H),H}(1,1)\to 0.
\end{equation}
We remark that $S\neq \emptyset $ and in particular $\# S\geq 2 $. Moreover $ H\cong \PP^{n-1}$, so $ h^0\left(H,\Ii_{(2S,H)}(1,1)\right)=0$. Since by assumption $\langle \pi_1(S)\rangle=L$, where $\dim L =\mu$, we get  $h^0\left(\Ii_S(1,0)\right) = n-1-\mu$. So by \eqref{u+1} we get $h^0\left(\Ii_{(2S,Y)}(1,1)\right) = n-1-\mu$. Thus $\delta(2S,Y) = r(n+1) -2n+n-\mu-1 = (r-1)(n+1)-\mu$.

In particular for $\mu =1$, i.e. if $L$ is a line, we obtain $\delta (2S,Y) =(r-1)(n+1)-1$. Since $h^0(\Oo_Y(1,1)) =2n$
and $\deg (2S,Y) =r(n+1)$, when $\mu =1$ we get $h^0(\Ii _{(2S,Y)}(1,1)) =2n -r(n+1) +(r-1)(n+1) -1 =n-2>0$. Thus if $\mu=1$, $S\in
\TT_1(Y,r)$ and in particular $\delta _1(S,Y) = (r-1)(n+1)-1$.\\
Obviously also $\PP^1\times \PP^{n-1}$ gives an example, taking an $ L$
in the second factor of $Y$.
\end{example}

\begin{lemma}\label{g2}
Fix integers $n\ge 2$ and $r\ge 2$. $\Ee (n,r)\ne \emptyset$ if and only if $n\ge 3$.
\end{lemma}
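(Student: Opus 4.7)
The plan is to prove the two implications separately. For $n=2$ the set $\Uu(2)$ contains only the two spaces $\PP^2$ and $\PP^1\times\PP^1$, and in both cases I will show that the $h^0$ condition in the definition of $\TT_1(Y,r)$ fails for every $S\in S(Y,r)$ with $r\ge 2$. For $n\ge 3$ I will exhibit a concrete member of $\Ee(n,r)$ by invoking Example \ref{g0}.

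For the only-if direction, first consider $Y=\PP^2$. Here $h^0(\Oo_Y(1))=3=\deg(2p,\PP^2)$, and since the double point $(2p,\PP^2)$ imposes independent conditions on $H^0(\Oo(1))$, one gets $h^0(\Ii_{(2p,\PP^2)}(1))=0$ already at $r=1$, so $h^0(\Ii_{(2S,Y)}(1))=0$ for every nonempty $S$. Next consider $Y=\PP^1\times\PP^1$. The Segre embedding realizes $\nu(Y)$ as a smooth quadric surface $Q\subset\PP^3$. A section of $\Ii_{(2S,Y)}(1,1)$ corresponds to a hyperplane of $\PP^3$ containing every tangent plane $T_{\nu(p)}Q$ for $p\in S$; but each such $T_{\nu(p)}Q$ is itself a hyperplane, so the inclusion forces $H=T_{\nu(p)}Q$ for all $p\in S$. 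Since the Gauss map of a smooth quadric is injective in characteristic zero, distinct points yield distinct tangent planes, hence $h^0(\Ii_{(2S,Y)}(1,1))=0$ whenever $\#S\ge 2$. In both subcases $\TT_1(Y,r)=\emptyset$, so $\Ee(2,r)=\emptyset$.

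For the if direction, assume $n\ge 3$ and fix any $r\ge 2$. Apply Example \ref{g0} with the choice $\mu=1$, $Y:=\PP^{n-1}\times\PP^1$ (so $Y\in\Uu(n)$), a line $L\subset\PP^{n-1}$, and a set $S\subset L\times\{o\}$ of $r$ distinct points with $\langle\pi_1(S)\rangle=L$; the hypothesis $r\ge\mu+1=2$ is satisfied. Example \ref{g0} directly computes
\[
\delta(2S,Y)=(r-1)(n+1)-1>0\quad\text{and}\quad h^0\bigl(\Ii_{(2S,Y)}(1,1)\bigr)=n-2>0,
\]
the second inequality using $n\ge 3$. Thus $S\in\TT_1(Y,r)$ and $(Y,S)\in\Ee(n,r)$, proving $\Ee(n,r)\ne\emptyset$.

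The only real obstacle is the $\PP^1\times\PP^1$ case of the only-if direction, and this is settled by the classical fact that the Gauss map of a smooth quadric surface in $\PP^3$ is injective; everything else is either a dimension count or a direct citation of Example \ref{g0}.
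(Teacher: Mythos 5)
Your proof is correct and follows essentially the same route as the paper: for $n=2$ you rule out $\PP^2$ by a dimension count and $\PP^1\times\PP^1$ by the fact that each tangent plane of the smooth quadric is tangent at a unique point (the paper delegates this to Proposition \ref{g1}, whose proof contains exactly that argument), and for $n\ge 3$ you invoke Example \ref{g0} with $\mu=1$, just as the paper does.
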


\begin{proof}
For $n=2$ we remark that  $\Uu (2)= \{[\PP^2],[\PP^1\times \PP^1]\}$. For both cases, by Proposition \ref{g1} we get $ h^0\left(\Ii_{(2S,Y)}(1,\dots,1)\right)=0$. Viceversa, if $n\geq 3 $ we may take $Y =\PP^{n-1}\times \PP^1$ and $S$ as in Example \ref{g0}.
\end{proof}

\begin{remark}\label{remg3}Let $n>0$ and $r\geq 2$. By Proposition \ref{g1}, for all $Y\in \Uu(n) $ and $ S\in S(Y,r)$ the maximum value of $ h^1\left(\Ii_{(2S,Y)}(1,\dots,1)\right)$ is achieved when $ Y=\PP^{n}$. Clearly if $Y=\PP^n $, $ h^0\left(\Ii_{(2S,Y)}(1,\dots,1)\right)=0$.
Thus the couple $(Y,S)\in \Uu(n)\times S(Y,r) $ evincing $\delta_1(n,r) $ is such that $ Y$ is a multiprojective space with $ k\geq 2$ factors.
\end{remark}

\begin{theorem}\label{g3}
Fix integers $n\ge 3$ and $r\ge 2$. Then $\delta _1(n,r) = (r-1)(n+1)-1$ and any $(Y,S)$ evincing $\delta _1(n,r)$ is as in
Example \ref{g0} with $\mu =1$.
\end{theorem}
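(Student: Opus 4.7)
The plan is to split the proof into (i) establishing the upper bound, (ii) exhibiting the extremal configuration, and (iii) proving that every extremal configuration is of the form described in Example \ref{g0} with $\mu=1$. Parts (i) and (ii) are quick consequences of what has been set up. By Proposition \ref{g1}, for any $(Y,S)\in\Ee(n,r)$ we have $\delta(2S,Y)\le(r-1)(n+1)$, with equality forcing $Y=\PP^n$; but then $h^0(\Ii_{(2S,\PP^n)}(1))=0$ contradicts $S\in\TT_1(Y,r)$. Hence $\delta_1(n,r)\le(r-1)(n+1)-1$. Sharpness is witnessed by Example \ref{g0} with $\mu=1$: taking $Y=\PP^{n-1}\times\PP^1$ and $r$ collinear points in $\PP^{n-1}\times\{o\}$ gives $\delta(2S,Y)=(r-1)(n+1)-1$ and $h^0(\Ii_{(2S,Y)}(1,1))=n-2>0$, so $(Y,S)\in\Ee(n,r)$ evinces the bound.

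For the characterization, fix any $(Y,S)\in\Ee(n,r)$ with $\delta(2S,Y)=(r-1)(n+1)-1$ and write $Y=\PP^{n_1}\times\cdots\times\PP^{n_k}$ with $k\ge 2$. The first step is to show $\delta(2A,Y)=n$ for every $A\in S(S,2)$: combining \eqref{eqm2} (which yields $\delta(2A,Y)\ge\delta(2S,Y)-(r-2)(n+1)=n$) with Proposition \ref{g1} applied to $r=2$ on $Y\ne\PP^n$ (which yields $\delta(2A,Y)\le n$), equality $\delta(2A,Y)=n$ is forced. The next step, which is the technical heart of the argument, is to show that such a pair $A=\{u,v\}$ must differ in exactly one coordinate, and that the corresponding factor has dimension $n-1$. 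Let $I=\{i:\pi_i(u)\ne\pi_i(v)\}$ and $Y'\cong(\PP^1)^{|I|}$ the minimal multiprojective space containing $A$. If $|I|\ge 2$, then for every index $j$ the projection $\eta_j|_A$ is injective (since $A$ differs in at least one coordinate other than $j$), so $h^1(\Ii_A(\hat\epsilon_j))=h^1(Y_j,\Ii_{\eta_j(A)}(1,\dots,1))=0$ by very ampleness; iterating the base case of Lemma \ref{a1}\ref{a1disuguaglianze} along a chain of codimension-$1$ extensions from $Y'$ up to $Y$ yields $\delta(2A,Y)=\delta(2A,Y')\le 2<n$, contradicting $\delta(2A,Y)=n$. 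Hence $|I|=1$, and Lemma \ref{a1}\ref{a1unfattore} applied to the containing factor gives $\delta(2A,Y)=\delta(2A,\PP^{n_i})=n_i+1$, so $n_i=n-1$.

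Finally, suppose two pairs $A_1=\{u,v\}$ and $A_2=\{u,w\}$ differ in indices $i\ne j$ respectively; then the pair $\{v,w\}$ differs in both $i$ and $j$, contradicting the previous step. So there is a common index $i^*$ in which all pairs of $S$ differ, and $S\subset\PP^{n-1}\times\{o\}$ where we abbreviate $Y=\PP^{n-1}\times\PP^1$ (since $k\ge 2$ and the remaining factors sum to dimension $1$). To see that $\pi_1(S)$ spans only a line, apply the residual exact sequence with $H=\pi_2^{-1}(o)\cong\PP^{n-1}\supset S$: since $h^0(\PP^{n-1},\Ii_{(2S,H)}(1))=0$, one obtains $h^0(\Ii_{(2S,Y)}(1,1))=h^0(\Ii_S(1,0))=n-1-\dim\langle\pi_1(S)\rangle$, and equating $\delta(2S,Y)=\deg(2S,Y)-h^0(\Oo_Y(1,1))+h^0(\Ii_{(2S,Y)}(1,1))$ with $(r-1)(n+1)-1$ forces $\dim\langle\pi_1(S)\rangle=1$. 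This is precisely Example \ref{g0} with $\mu=1$, completing the proof. The main obstacle is the classification of extremal pairs in the second paragraph; everything else is bookkeeping on top of Proposition \ref{g1} and the residual sequence.
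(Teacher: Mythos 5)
Your proof is correct, and it takes a genuinely different route from the paper's. The paper runs a double induction (on $n$, with base case $n=3$ settled by enumerating $\Uu(3)$, and then on $r$, where the step $r\ge 3$ peels off $(r-1)$-element subsets and several configurations are disposed of by explicit orbit arguments and direct computation). You instead reduce to $2$-element subsets: subadditivity \eqref{eqm2} forces every pair $A\subset S$ to satisfy $\delta(2A,Y)=n$, you classify such extremal pairs (they differ in exactly one coordinate, whose factor must be $\PP^{n-1}$), you glue via the observation that two pairs with different special indices would produce a pair differing in two coordinates, and you finish with the residual-sequence count from Example \ref{g0} run backwards to force $\mu=1$. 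This is uniform in $n$ and $r$ and avoids all the computer checks. Two small points are worth flagging. First, for the contradiction when $|I|\ge 2$ you genuinely need the \emph{equality} $\delta(2A,Y)=\delta(2A,Y')$ rather than the inequality of Lemma \ref{a1}\ref{a1disuguaglianze} (which would only give $\delta(2A,Y)\le 2+(n-2)=n$, no contradiction); the equality holds because $h^1(\Ii_A(\hat\epsilon_i))=0$ at \emph{every} codimension-one step of the chain from $Y'$ to $Y$, not only the top one — this is true for the same reason you give (since $|I|\ge 2$, the forgetful projection restricted to $A$ is injective at each intermediate stage), but it uses the refinement stated inside the proof of Lemma \ref{a1} rather than its statement, a move the paper itself makes in Lemma \ref{a6}. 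Second, the bound $\delta(2A,Y')\le 2$ for $Y'=(\PP^1)^{|I|}$ deserves a word: it follows from Proposition \ref{d1} (either $h^0$ or $h^1$ vanishes) together with $\deg(2A,Y')-h^0(\Oo_{Y'}(1,\dots,1))=2(|I|+1)-2^{|I|}\le 2$. With these justifications spelled out, the argument is complete.
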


\begin{proof}

By Remark \ref{remg3} we may work with multiprojective spaces $Y $'s of $k\geq 2 $ factors. So, by Proposition \ref{g1}, for all $(Y,S) $ 
$$ \delta(2S,Y)\leq (r-1)(n+1)-1.$$ 
The case $\mu =1$ of Example \ref{g0} gives the inequality $\delta _1(n,r) \ge (r-1)(n+1)-1$. Thus it remains to prove that this is the only case.

Fix $(Y,S)$ evincing $\delta
_1(n,r)$. Thus
$Y =\PP^{n_1}\times \cdots \times \PP^{n_k}$ 
where all $ n_i>0$ and are such that $n_1+\cdots +n_k=n$. The finite set $S\in S(Y,r)$, 
is such that $h^0\left(\Ii
_{(2S,Y)}(1,\dots ,1)\right) >0$ and $h^1\left(\Ii _{(2S,Y)}(1,1)\right) \ge (r-1)(n+1)-1$.

We will show the result by induction on $ n\geq 3$.

If $n=3 $ then $\Uu(3)=\{[ \PP^3],[\PP^2\times \PP^1], [(\PP^1)^3]\} $.

Clearly the case $ Y=\PP^3$ is excluded by Remark \ref{remg3}. If $Y=\PP^2\times \PP^1 $, it suffices to show that for any other $ r$-uple of points $ \hat{S}\in S(Y,r)$ that is not as in Example \ref{g0}, we get $\delta(2\hat{S},Y)< 4(r-1)-1  $. If $ r=2$ this is true since $\delta(2S,Y)= 2$ unless $ S\in S(Y,2)$ is as in Example \ref{g0}. If $ r\geq 3$ then $ h^0\left(\Ii_{(2S,Y)}(1,1)\right)=0$ for all $ S\in S(Y,r)$.\\
Let $ Y=(\PP^1)^3$. By Proposition \ref{d1} we exclude the case $r=2 $ since either $\delta(2S,Y) $ or $h^0\left(\Ii_{(2S,Y)}(1,1,1)\right) $ is zero. If $ r=3$, Lemma \ref{lu2} gives the only cases for which $ S\in \TT(Y,3)$ and for such cases we already proved that $\delta(2S,Y)=5< \delta_1(3,3) $ and $ h^0\left(\Ii_{(2S,Y)}(1,1,1)\right)=1$. Thus for $r\geq 4 $ we get $h^0\left(\Ii_{(2S,Y)}(1,1,1)\right)=0 $ for all $ S\in S(Y,r)$ that are not as in Example \ref{g0}.

Assume that the proposition is true for all $n'<n$. 
 We will prove the inductive step by induction on $r\geq 2 $. Case \ref{g3base} will be the base case and in case \ref{g3induzione} we will show the inductive step.

\begin{enumerate}[leftmargin=+.2in, label=(\alph*)]
\item\label{g3base} 
 Assume $r= 2$ and let $L:=\langle \nu(S)\rangle $.

 First assume that $ Y$ has $k=2$ factors, i.e. $ Y=\PP^{n_1}\times \PP^{n_2}$. With no loss of generality we may assume $n_1\ge n_2$. To conclude 
this case it is sufficient to prove that $n_2 =1$ and $\#\pi _2(S)=1$ and we will do it by contradiction. 

First assume $n_2\ge 2$. Since $h^0(\Oo _Y(0,1)) =n_2+1 > 2 $, there is $M\in |\Ii_S(0,1)|$. Thus $S \subset M$.  If $ (S,M)$ is as in Example \ref{g0} there is nothing to prove, otherwise by the inductive step we get $h^1(M,\Ii _{(2S,M)}(1,1)) \le n-2$.
Since $\dim Y=\dim M +1$, part \ref{a1disuguaglianze} of Lemma \ref{a1} gives  $h^1(\Ii _{(2S,Y)}(1,1))\le n-2+1 <n$  which is absurd since we took $ (Y,S)$ evincing $ \delta_1(2,n)=n$. 

Assume now that $ \#\pi_2(S)=2$. Again if $\#\pi_1(S)=1 $ then $S$ is as in  Example \ref{g0}, so assume also $ \#\pi_2(S)=2$. Thus the minimal multiprojective space containing $ S$ is $Y=\PP^2\times \PP^2 $. So $ S$ is in the open orbit for the action of  $Aut(\PP^2)^2$ on $ S(Y,3)$. Hence $h^0\left(\Ii_{(2S,Y)}(1,1)\right)=0$ and consequently, since $ \deg(2S,Y)=15$ and $ h^0\left(\Oo_Y(1,1)\right)=9$, we get $\delta(2S,Y) =6<\delta_1(4,3)$.

Assume now $ Y$ has $k>2$ factors. 
By Lemma \ref{a1} and the equality $\delta _1(n',2) = (r-1)(n'+1)-1$ for all $n'<n$, $Y$ is the minimal multiprojective space containing $S$. Thus $Y = (\PP^1)^k$.
Fix $H\in |\Oo_Y(\epsilon _k)|$ containing at  least on
point of $S$. Since $S\nsubseteq H$, $\#(S\cap H) =\#(S\setminus S\cap H) =1$. Denote by $S:=\{a,b \} $ and by relabeling if necessary, assume $ S\cap H=\{ a\}$ and $S\setminus S\cap H=\{ b\} $.

 Consider the residual exact sequence of $H$:
\begin{align}\label{eqg1}
&0 \to \Ii _{(2b,Y)\cup (a,Y)}(\hat{\epsilon}_k) \to \Ii_{(2S,Y)}(1,\dots ,1)\to \Ii _{(2a,H),H}(1,\dots ,1)\to 0.
\end{align}
Since $\#(S\cap H)=1$ and $\Oo_H(1,\dots ,1)$ is very ample, $\delta(2a,H)=0$. 
Since $\#(S\setminus S\cap H) =1$, $\Oo_{Y_k}(1,\dots ,1)$ is very ample and $\dim Y-\dim Y_k=1$, we
have
$h^1\left(\Ii _{(2b,Y)}(\hat{\epsilon}_k)\right)=0$. Since $\#(S\cap H)=1$, $h^1\left(H, \Ii _{(2b,Y)\cup (a,Y)} (\hat{\epsilon}_k)\right)\le 1$. Thus \eqref{eqg1} gives $h^1\left(\Ii _{(2S,Y)}(1,\dots ,1)\right) \le 1<n$, a contradiction.

\item\label{g3induzione} Assume now $r\ge 3$. Fix any $A\subset S$ such that $\#A=r-1$. Since $\delta (2S,Y)\le \delta
(2A,Y) +n+1$ (cf. Remark \ref{m1}), the inductive assumption gives the pair $(Y,A)$ is as in Example \ref{g0}. Thus either $Y\cong
\PP^{n-1}\times
\PP^1$ or
$Y\cong \PP^1\times \PP^{n-1}$. With no loss of generality we may assume $\PP^{n-1}\times \PP^1$. The inductive
assumption gives the existence of a line $L_A\subset \PP^{n-1}$ and a point $o_A\in \PP^1$ such that $A\subset L\times
\{o_A\}$. Since $r\ge 3$ there is $B\subset S$ with $\#B=r-1$, $B\cap A\ne \emptyset$ and $B\ne A$. We get $\{o_A\} =
\pi_2(A)=\pi _2(B) =\{o_B\}$. Thus $\#\pi _2(S)=1$. 

To conclude the proof it is sufficient to show that $\pi_1(S) $ spans a line 
and we will do it by induction on $ r\geq 3$. Take for the moment $r=3$, assume that
$\langle \pi_1(S)\rangle$ is a plane and set $M:= \langle \pi _1(S)\rangle \times \PP^1$. By part \ref{a1disuguaglianze} of
Lemma \ref{a1} and the assumption $\delta (2S,Y) =2(\dim Y+1)-1$, we have $\delta (2S,M) \ge 2(\dim M+1)-1$. Moreover  $\delta
(2S,M) = 2(\dim M+1)-1 =7$, because $M$ is not a projective space. However, by direct computation, one gets $\delta (2S,M) =
3(\dim M +1)-6 =6$.

Let $r\ge 4$. Take any $2$ distinct subsets $A$, $B$ of $r$ with $\#A=\#B =r-1$. Since $\#A\cap B =r-2\ge 2$,
the lines $L_A$ and $L_B$ have at least $2$ common points. Thus $L_A=L_B$. Hence $\pi _1(S)$ spans a line.
\end{enumerate}
\vspace{-0,5cm}
\end{proof}

Example \ref{g0} gives the following result, the last equality being true by Theorem \ref{g3}.

\begin{theorem}\label{boo1}
Fix integers $n> \mu \ge 2$ and $r\ge \mu+1$. Then there is $(Y,S) \in \Ee (n,r)$ such that $\delta (2S,Y) = (r-1)(n+1)-\mu =\delta _1(n,r)-\mu+1$.
\end{theorem}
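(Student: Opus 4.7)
The plan is to exhibit the pair $(Y,S)$ directly from Example \ref{g0} and then invoke Theorem \ref{g3} for the final equality; the theorem is essentially a repackaging of material already in place.

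Concretely, I would set $Y:=\PP^{n-1}\times \PP^1$, which has $\dim Y = n$, so $Y\in \Uu(n)$. Then I would fix a $\mu$-dimensional linear subspace $L\subseteq \PP^{n-1}$, a point $o\in \PP^1$, and a subset $S\subset L\times \{o\}$ with $\#S=r$ and $\langle \pi_1(S)\rangle = L$. Such a set exists because $r\ge \mu+1 = \dim L + 1$, so one can pick $\mu+1$ points spanning $L$ and then $r-\mu-1$ further points in $L$.

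Next I would verify membership in $\Ee(n,r)$. By definition this requires $S\in \TT_1(Y,r)$, i.e., both $\delta(2S,Y)>0$ and $h^0(\Ii_{(2S,Y)}(1,1))>0$. The residual exact sequence used in Example \ref{g0} with respect to $H:=\pi_2^{-1}(o)\cong \PP^{n-1}$ simultaneously gives
\[
\delta(2S,Y) = (r-1)(n+1)-\mu \quad\text{and}\quad h^0(\Ii_{(2S,Y)}(1,1)) = n-1-\mu,
\]
both of which are positive under the standing hypothesis $n>\mu\ge 2$ (recalling $r-1\ge \mu\ge 2$, so $\delta>0$; and the example's computation of $h^0$ from $h^0(\Ii_L(1))=n-1-\mu$ secures the first cohomology group). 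This gives the first equality $\delta(2S,Y)=(r-1)(n+1)-\mu$.

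Finally, the second equality $(r-1)(n+1)-\mu = \delta_1(n,r)-\mu+1$ is pure arithmetic once one substitutes the value $\delta_1(n,r)=(r-1)(n+1)-1$ supplied by Theorem \ref{g3}. There is no serious obstacle here because all the hard work — namely the residual-sequence computation in Example \ref{g0} and the upper-bound/characterization in Theorem \ref{g3} — is already available; the only care needed is to check that the specific $(Y,S)$ from Example \ref{g0} indeed satisfies the membership condition for $\Ee(n,r)$ (which uses the more permissive $\TT_1$, so one does not need to verify minimality of $Y$ containing $S$).
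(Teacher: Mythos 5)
Your proposal follows exactly the route the paper takes: its entire proof of this theorem is the one-line citation of Example \ref{g0} (for the construction and the value of $\delta(2S,Y)$) together with Theorem \ref{g3} (for the identity $\delta_1(n,r)=(r-1)(n+1)-1$), and your expansion of that citation — choice of $Y=\PP^{n-1}\times\PP^1$, of $S\subset L\times\{o\}$ spanning the $\mu$-plane $L$, the residual sequence with respect to $H=\pi_2^{-1}(o)$, and the observation that $\Ee(n,r)$ only asks for membership in $\TT_1$ so minimality of $Y$ is not needed — is faithful and correct in substance. One point deserves flagging: you assert that $h^0\left(\Ii_{(2S,Y)}(1,1)\right)=n-1-\mu$ is positive ``under the standing hypothesis $n>\mu\ge 2$,'' but $n>\mu$ only gives $n-1-\mu\ge 0$; at the boundary $\mu=n-1$ (which the hypotheses permit) one gets $h^0=0$, so this particular $(Y,S)$ fails the $h^0>0$ requirement for $\TT_1(Y,r)$ and hence does not lie in $\Ee(n,r)$. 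Your argument, like the paper's, therefore really covers $2\le\mu\le n-2$; the case $\mu=n-1$ would need either a strengthened hypothesis or a different construction. Since this defect is inherited from the source rather than introduced by you, and everything else (the computation of $\delta$, the positivity $(r-1)(n+1)-\mu>0$, and the final arithmetic via Theorem \ref{g3}) checks out, the proposal is essentially the paper's proof with one boundary case silently overclaimed.
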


\begin{remark}\label{g4.0}
If $S\in \Ss(Y,2) $ is such that $ Y$ is the minimal multiprojective space containing $ S$, then $ Y=(\PP^1)^k$, for some $ k\geq 1$. Proposition \ref{d1} gives $\EE(n,2)=\emptyset$. 
Theorem \ref{i33} gives that $\EE(n,3)$ is the set of all $\PP^{n_1}\times
\PP^{n_2}\times (\PP^1)^{n-n_1-n_2}$ with $1 \le n_2\le n_1 \le 2$ and $n>n_1+n_2$. 
\end{remark}

\begin{example}\label{kk1}
Let $ r,n\geq 3$ and let $Y:=(\PP^1)^n $. Take $A\subset \PP^1$ such that $\#A =r-1$ and define $S:=\{p_1,\dots,p_r\} \subset Y $ where
\begin{align*}
   & p_i=(a_i,e_2,\dots,e_n) \mbox{ for } i=1,\dots,r-1 \mbox{ with all } a_i\in A \mbox{ and all } e_j\in \PP^1\\
  &  p_r=(o_1,\dots,o_r), \mbox{ with } o_1\in \PP^1\setminus A,\; o_k\in \PP^1 \; : \;  o_k\neq e_k \mbox{ for all } k=2,\dots,n 
\end{align*}

Set $S'=S\setminus \{ p_r\}$ and let $Y':=\PP^1\times \{ e_2\} \times \cdots \times \{e_n \}$. Note that $Y'\cong\PP^1$ is the minimal multiprojective subspace containing $S'$ and that $Y$ is the minimal multiprojective subspace containing
$S$. From \eqref{eqm2} of Remark \ref{m1} we know that $\delta (2S,Y)\ge \delta(2S',Y)\ge \delta (2S',Y')=\delta (2A,\PP^1) =2(r-2) >0$. Take $H:= \pi_n^{-1}(e_n)\in |\Oo
_Y(\epsilon _n)|$. Since $S'\subset H$, the residual exact sequence of $2S'$ with respect to $H$ gives

\begin{equation}\label{++eqk1}
0 \to \Ii_{S'}(1,\dots,1,0) \to \Ii_{(2S',Y)}(1,\dots ,1)\to \Ii _{(2S',H),H}(1,\dots ,1)\to 0
\end{equation} 

Thus \eqref{++eqk1} gives $h^0\left(\Ii_{(2S',Y)}(1,\dots ,1)\right) \ge h^0\left(\Ii_{S'}(1,\dots ,1)\right)$. Since $\nu(S')$ spans
a line, $h^0\left(\Ii_{S'}(1,\dots ,1)\right)=2^n -2$. Since $n\ge 3$, $h^0\left(\Ii_{(2S',Y)}(1,\dots ,1)\right) \ge n+2$. Thus $\delta (2S,Y)>0$ and $ h^0\left(\Ii_{(2S,Y)}(1,\dots,1)\right)>0$. 
\end{example}

\begin{proposition}\label{kk2}
$\EE(n,r)\ne \emptyset$ if and only if $n\ge 3$ and $r\ge 3$.
\end{proposition}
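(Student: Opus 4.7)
The plan is to derive both directions as immediate consequences of results already established in the excerpt; no new geometric work is required.

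For the ``only if'' direction, I would first observe that Proposition \ref{d1} gives $\TT(Y,2)=\emptyset$ for every multiprojective space $Y$, so $\EE(n,2)=\emptyset$ for all $n$, ruling out $r=2$. For the constraint $n\geq 3$, I would use the trivial inclusion $\EE(n,r)\subseteq \Ee(n,r)$: indeed, $\TT(Y,r)\subseteq \TT_1(Y,r)$ by definition, so any $(Y,S)\in \EE(n,r)$ automatically lies in $\Ee(n,r)$. Lemma \ref{g2} then asserts $\Ee(n,r)=\emptyset$ whenever $n\leq 2$, giving $\EE(n,r)=\emptyset$ in that range.

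For the ``if'' direction, given $n\geq 3$ and $r\geq 3$, I would exhibit an explicit pair $(Y,S)\in \EE(n,r)$ by taking $Y=(\PP^1)^n$ (so $\dim Y = n$) and letting $S=\{p_1,\dots ,p_r\}$ be the configuration already constructed in Example \ref{kk1}. The key point is that $Y$ is minimal for $S$: by construction $\#\pi_1(S)=r\geq 2$ (the $a_i\in A$ are distinct and $o_1\notin A$), while for each $k=2,\dots ,n$ one has $\#\pi_k(S)=2$ because $o_k\neq e_k$. Example \ref{kk1} has already verified the two conditions defining $\TT_1(Y,r)$: namely $\delta(2S,Y)>0$ (via the chain $\delta(2S,Y)\geq \delta(2S',Y)\geq \delta(2S',Y')=2(r-2)$ coming from \eqref{eqm2} of Remark \ref{m1}) and $h^0(\Ii_{(2S,Y)}(1,\dots,1))>0$ (via the residual exact sequence along $H=\pi_n^{-1}(e_n)$ combined with the fact that $\nu(S')$ spans a line in the Segre image of $Y'$). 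Combined with the minimality of $Y$, this places $S$ in $\TT(Y,r)$, so $(Y,S)\in \EE(n,r)$.

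The potential obstacle in such a statement would be simultaneously arranging the $h^0$ and $h^1$ conditions together with minimality, but this has already been packaged into Example \ref{kk1}, so the present proposition amounts to a clean assembly of Proposition \ref{d1}, Lemma \ref{g2}, and Example \ref{kk1}.
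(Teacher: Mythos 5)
Your proof is correct and follows exactly the paper's argument: the ``if'' direction is Example \ref{kk1}, and the ``only if'' direction combines Proposition \ref{d1} (to exclude $r=2$) with Lemma \ref{g2} and the inclusion $\EE(n,r)\subseteq \Ee(n,r)$ (to exclude $n\le 2$). The extra verification of minimality of $Y$ for $S$ is already noted in Example \ref{kk1}, so nothing further is needed.
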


\begin{proof}
If $n\geq3 $ and $ r\geq3$, Example \ref{kk1} shows that $ \EE(n,r)\neq \emptyset$. The other implication follows from Lemma \ref{g2} since $ \EE(n,r)\subseteq \Ee(n,r)$ and $\TT(Y,2)=\emptyset $ (cf. Proposition \ref{d1}).
\end{proof}

\end{document}